\definecolor{darkblue}{rgb}{0,0,0.545098}
\definecolor{darkgreen}{rgb}{0,0.392157,0}
\newtheorem{theorem}{Theorem}[section]
\newtheorem{lemma}[theorem]{Lemma}
\newtheorem{proposition}[theorem]{Proposition}
\theoremstyle{definition}
\theoremstyle{remark}
\newtheorem{remark}[theorem]{Remark}
\numberwithin{equation}{section}
\newcommand{\T}{\mathbb{T}}
\newcommand{\I}{\mathbb{I}}
\newcommand{\D}{\mathbb{D}}
\newcommand{\R}{\mathbb{R}}
\newcommand{\func}[1]{\operatorname{#1}}
\def\R{\mathbb{R}}
\def\limCo{-\dfrac{1}{2}\left(\dfrac{1}{\tau }-\dfrac{1}{\beta}\right)}
\def\Re{\func{Re}}
\title[MOORE-GIBSON-THOMPSON EQUATION IN $\mathbb{R}^N$]{Wellposedness and  decay rates for the Cauchy problem of the Moore-Gibson-Thompson equation arising in high intensity ultrasound}
\author{M. Pellicer$^{1}$}
\thanks{$^{1}$Dpt. d'Inform\`atica, Matem\`atica Aplicada i Estad\'{\i}stica (EPS), Universitat de Girona, Campus de Montilivi, 17071 Girona, Catalunya (Spain), e-mail: martap@imae.udg.edu}
\author{B. Said-Houari$^{2}$}
\thanks{$^{2}$Mathematics and Natural Sciences Department, Alhosn University,
P.O. Box: 38772, Abu Dhabi (United Arab Emirates),
e-mail: bsaidhouari@gmail.com}
\date{\today}
\begin{document}
%\title{Global existence and decay rates for solutions of the JORDAN-MOORE-GIBSON-THOMPSON EQUATION
%}

%\author{BELKACEM SAID-HOUARI$^\ast$}
%\thanks{$^\ast$
%ALHOSN University, Mathematics and Natural Sciences Department, PO Box 38772, Abu Dhabi, United Arab Emirates,
% Email: saidhouarib@yahoo.fr
%  }
%\author{$^\dag$}
%\thanks{$^\dag$ Department of Mathematics, }

\subjclass[2000]{35B35, 35L55, 74D05, 93D20, 93D20. }
\keywords{}
\maketitle
%\begin{abstract}
%In this paper, we study the Cauchy problem of the temporally third order Moore--Gibson--Thompson equation in $\R^N$ which arises in viscous thermally relaxing fluids. First, we use the energy method in the Fourier space to show that under certain assumptions on some parameters in the equation, the energy norm of the solution  decays with the rate $(1+t)^{-N/4}$. On the other hand and since the obtained decay rate, using the energy method, does contain the decay rate of some derivatives of the solution, but not  the one of the solution, then in the second part of the paper, we show explicit representation of the solution in the frequency domain by analyzing the eigenvalues of the  Fourier image of the solution and writing the solution accordingly.
%\end{abstract}

\begin{abstract}
In this paper, we study the Moore--Gibson--Thompson equation in $\R^N$, which is a third order in time equation that arises in viscous thermally relaxing fluids and also in viscoelastic materials (then under the name of \emph{standard linear viscoelastic} model). First, we use some Lyapunov functionals in the Fourier space to show that, under certain assumptions on some parameters in the equation, an energy norm related with the solution decays with a rate $(1+t)^{-N/4}$. But this does not give the decay rate of the solution itself. Hence, in the second part of the paper, we show an explicit representation of the solution in the frequency domain by analyzing the eigenvalues of the  Fourier image of the solution and writing the solution accordingly. We use this eigenvalues expansion method to give the decay rate of the solution (and its derivatives), which (for the solution) results in $(1+t)^{1-N/4}$ for $N=1,2$ and $(1+t)^{1/2-N/4}$ when $N\geq 3$.
\end{abstract}
\keywords{\textbf{Keywords:} Moore-Gibson-Thompson equation, decay rate, Fourier transform, energy method, eigenvalues expansion method.}

%%%%%%%%%%%%%%%%%%%%%%%%%%%%%%%%%%%%%%%%%%%%%%%%%%

\section{Introduction, derivation of the model and well-posedness.}
Acoustic is an active field of research which is concerned with the generation and space-time evolution of small mechanical perturbations in fluid (sound waves) or in solid (elastic waves). One of the important equations in nonlinear acoustics is
the  Kuznetsov equation:
\begin{equation}\label{Kuznetsov_1}
u_{tt}-c^{2}\Delta u-b \Delta u_{t}=\frac{\partial}{\partial t}\left(\frac{1}{c^2}\frac{B}{2A}(u_t)^2+|\nabla u|^2\right),
\end{equation}
where $u$ represents the acoustic velocity potential, and $b$, $c$, and ${B}/{A}$ are the diffusivity and speed of sound, and the nonlinearity parameter, respectively. The derivation of equation \eqref{Kuznetsov_1} can be obtained from the general equations of fluid mechanics but we include a brief summary of its derivation, that can be found with more details in \cite{Kaltenbacher_2011}, \cite{Treeby_2012} or \cite{Coulouvrat_1992} and the references therein, for instance.  It is based on the relations coming from the conservation of mass, conservation of momentum and entropy balance in the model of thermo-viscous flow in a compressible fluid:
\begin{itemize}
\item  the equation of conservation of mass (continuity equation):
\begin{equation}\label{Mass_Conser}
\varrho_t+\nabla\cdot (\varrho v)=0,
\end{equation}
\item the equation of conservation of momentum (Newton's second law):
\begin{equation}\label{Conservation_Momuntum}
 \varrho(v_t+(v\cdot \nabla)v )=\nabla\cdot \T,
\end{equation}
\item the equation of conservation of energy (first law of thermodynamics):
%\begin{equation}\label{Energy_Equation}
% \varrho\theta (\eta_t+(v\cdot \nabla) \eta)=-\nabla q+\T:\D.
%\end{equation}

\begin{equation}\label{Energy_Equation}
\varrho \theta(\eta_t+(v\cdot \nabla) \eta)=-\nabla\cdot q+\T:\D.
\end{equation}
\end{itemize}
In the previous equations we are considering $v$, $p$ and $\eta$ as the velocity, pressure and specific entropy of the acoustic particle, and $\varrho$ and $q$ as mass density and the heat flux vector, respectively.  Also,  $\D$ is the deformation or strain tensor given by $$\D=\frac{1}{2}(\nabla v+(\nabla v)^T)$$ and  $\T$ is the Cauchy--Poisson stress
tensor  given by $$\T=(-p+\lambda (\nabla\cdot v))\I +2\mu\D,$$
 where $\I$ is the identity matrix,   $\mu$ is the shear viscosity (the first coefficient of viscosity) and
  $\lambda=\zeta-\frac{2}{3}\mu$, where $\zeta$ is the second coefficient of viscosity (the bulk viscosity). The components of $\T:\D$ are $T_{ij}D_{ij}$, where $T_{ij}, D_{ij}$ are the components of $\T$ and $\D$, respectively.

It can be seen (see the above references for the details) that equations \eqref{Conservation_Momuntum} and \eqref{Energy_Equation} can be rewritten as
\begin{equation}\label{Conservation_Momuntum_2}
 \varrho(v_t+(v\cdot \nabla)v)=-\nabla p+\mu \Delta v+(\zeta+\mu/3)\nabla(\nabla\cdot v)
\end{equation}
and
\begin{equation}\label{Energy_Equation_2}
 \varrho\theta (\eta_t+(v\cdot \nabla) \eta)=2\mu\D:\D+\lambda (\nabla\cdot v)^2-\nabla\cdot q,
\end{equation}
respectively.

The previous equations, together with \eqref{Mass_Conser} and the following equations of state
\begin{equation}\label{Equation_State_0}
p=p(\varrho,\eta), \qquad \theta=\theta(\varrho,\eta)
\end{equation}
are the Navier Stokes equations.

First, we assume that the deviation of $\varrho,\, p,\,\eta$ and $\theta$ from their equilibrium values  $\varrho_0,\, p_0,\,\eta_0$ and $\theta_0$ is small. By taking the Taylor series expansion  of \eqref{Equation_State_0} around values at rest $\varrho_0$ and $\eta_0$ and ignoring the higher order terms, we get
\begin{equation*}
p(\varrho,\eta)=p(\varrho_0,\eta_0)+\left(\frac{\partial p}{\partial \varrho}(\varrho_0,\eta_0)\right)(\varrho-\varrho_0)+\frac{1}{2}\left(\frac{\partial ^2 p}{\partial \varrho^2}(\varrho_0,\eta_0)\right)(\varrho-\varrho_0)^2+\left(\frac{\partial p}{\partial \eta}(\varrho_0,\eta_0)\right)(\eta-\eta_0).
\end{equation*}
We put
 \begin{equation*}
p_0=p(\varrho_0,\eta_0),\quad A=\varrho_0\frac{\partial p}{\partial \varrho}(\varrho_0,\eta_0)=\varrho_0c^2,\quad B=\varrho_0^2\frac{\partial ^2 p}{\partial \varrho^2}(\varrho_0,\eta_0),\quad \varrho_0\frac{\gamma-1}{\chi }=\frac{\partial p}{\partial \eta}(\varrho_0,\eta_0),
\end{equation*}
where $\nabla p_0=0$,  $\chi$ is the coefficient of volume expansion and $\gamma=c_p/c_v$ is the ratio of specific heat, as $c_p$ and $c_v$ are the specific heat capacities at constant pressure and
constant volume. Then, the pressure $p$ is given by
 \begin{equation}\label{Equation_Pressure}
p(\varrho,\eta)=p_0+\varrho_0c^2\left[ \frac{\varrho-\varrho_0}{\varrho_0}+\frac{B}{2A}\left(\frac{\varrho-\varrho_0}{\varrho_0}\right)^2+\frac{\gamma-1}{\chi c^2}(\eta-\eta_0)\right],
\end{equation}

%In the above equations,  $v$ is the  acoustic particle velocity, $p$ is the acoustic pressure, $\varrho$ is  the mass density, $\eta$ is the specific entropy, $q$ is the heat flux, $\theta$ is the absolute temperature, $K$ is the thermal conductivity,   $c$ is the speed of sound, $B/A$ the parameter of nonlinearity, $\chi$ the coefficient of volume expansion and $\gamma=c_p/c_v$ is the ratio of specific heat, where $c_p$ and $c_v$ are the specific heat capacities at constant pressure and
%constant volume.

By  assuming  that the flow is rotation free, that is $\nabla\times v=0$, and introducing the acoustic velocity potential $v=-\nabla u$, then
it has been shown in \cite{kuznetso_1971} and \cite{Coulouvrat_1992}, that equation \eqref{Kuznetsov_1}, can be derived from the above set of equations by assuming the Fourier law of heat conduction
\begin{equation}\label{Fourier_Law}
q=-K\nabla \theta,
\end{equation}
where $K$ is the thermal conductivity and $\theta$ is the absolute temperature.
It is known that by modeling heat conduction with the  Fourier law (\ref{Fourier_Law}), which assumes the
flux $q$ to be proportional to the gradient of the temperature $\nabla
\theta $ at the same time $t$, leads to the
paradox of infinite heat propagation speed  (that is, any thermal disturbance
at a single point has an instantaneous effect everywhere in the medium) and also fails when $q$ increases or $\nabla\theta$ decreases (see \cite{Jordan_2014_1}). To overcome this
drawback, a number of modifications of the basic assumption on the relation
between the heat flux and the temperature have been made, such as
the  Maxwell--Cattaneo law, the Gurtin--Pipkin theory, the Jeffreys law, the Green--Naghdi theory and
others. The common feature of these theories is that all  permit transmission of heat flow as thermal waves
at finite speed. See \cite{Ch98,JLP89} for more details. One of these laws is the  Maxwell--Cattaneo law, that assumes the following relation between heat flux and temperature:
\begin{equation}\label{Cattaneo_Law}
\tau q_t+q=-K\nabla \theta,
\end{equation}
where $\tau$ is the relaxation time of the heat flux (usually small). By considering \eqref{Cattaneo_Law}, instead of  \eqref{Fourier_Law} and combining it with the equations of fluid mechanics, we get, instead of \eqref{Kuznetsov_1}, the Jordan--Moore--Gibson--Thompson equation (see \cite{Jordan_2014_1})

\begin{equation}\label{MGT_1}
\tau u_{ttt}+u_{tt}-c^{2}\Delta u-b \Delta u_{t}=\frac{\partial}{\partial t}\left(\frac{1}{c^2}\frac{B}{2A}(u_t)^2+|\nabla u|^2\right),
\end{equation}
where $b=\delta+\tau c^2$, with $\delta$ being the diffusivity of sound.

In this paper we consider the linearized version of equation \eqref{MGT_1}, known as the Moore--Gibson--Thompson equation in the acoustics theory:
\begin{equation}\label{MGT_2}
\tau u_{ttt}+u_{tt}-c^{2}\Delta u-b \Delta u_{t}=0.
\end{equation}
This linear equation is an active field of research. It also arises in viscous thermally relaxing fluids and has applications in medical and industrial use of high intensity ultrasound such as lithotripsy, thermotherapy or ultrasound cleaning (see \cite{Lasiecka_2015_2}). But it also appears in viscoelasticity theory under the name of \emph{standard linear model} of vicoelasticity (sometimes also called Kelvin or Zener model) to explain the behaviour of certain viscoelastic materials (that is, that exhibit both a viscous fluid and an elastic solid response) such as, for instance, fluids with complex microestructure (see \cite{Lasiecka_2015_1}). In this context, $u$ represents the linear deformations of a viscoelastic solid with an approach that is considered to be more realistic than the usual Kelvin-Voigt model. Actually, this model seems to be the simplest one that reflects both creeping and stress relaxation effects in viscoelastic materials (see \cite{Fung} or \cite{Rabotnov} for more details).

The derivation of equation \eqref{MGT_2} in $\R$ in the context of viscoelasticity theory can be obtained in the following way.  Let us recall that in viscoelasticity theory, springs and dashpots represent the elastic and viscous components of the materials, respectively. According to \cite{Gorain2010}, \cite{MR2013} or \cite{Fung}, among others, in the one dimensional case this equation represents a linear spring connected in series with a Kelvin-Voigt system, that is, another linear spring connected in parallel with a dashpot. This is a common way of approaching viscoelastic systems using a rheological point of view. Using this formulation (see again \cite{Gorain2010}, \cite{MR2013} or \cite{Fung} for more details), it is easy to see that such a system is governed by the following relation
 \begin{equation}\label{Constitutive_Equation}
\sigma+\tau\sigma^{\prime}=E(e+\beta e^\prime),
\end{equation}
where $\sigma$ is the stress and $e$ is the strain. According to \cite{Fung}, $\tau$ is the stress relaxation time under constant strain, $\beta$ is the strain relaxation time under constant stress and $E$ is the relaxed elastic modulus. In any case, all of them are parameters involving the elastic and viscous constants of the material. More concretely, if $\eta$ stands for the dashpot viscosity coefficient, and $E_1,E_2$ represent the Young modulus of the first and second elastic springs respectively, one has that
\begin{equation}\label{parameters1}
\tau=\dfrac{\eta}{E_1+E_2},\qquad E=\dfrac{E_1E_2}{E_1+E_2}, \qquad \beta=\dfrac{\eta}{E_2}.
\end{equation}
However, there are a few references in which the standard linear model is described as a linear spring connected in parallel with a Maxwell model, that is, a spring and a dashpot connected in series (see for instance \cite{Ottonsen}, where this model is also called the 3-parameter model). In this case, the relation between the stress and strain of the system is also \eqref{Constitutive_Equation}, but the parameters are
\begin{equation}\label{parameters2}
\tau=\dfrac{\eta}{E_2},\qquad E=E_1, \qquad \beta=\dfrac{\eta}{E_1}+\dfrac{\eta}{E_2}.
\end{equation}
In both descriptions of the standard viscoelastic model, as relation \eqref{Constitutive_Equation} is the same, the corresponding equation would be
 \begin{equation*}\label{SLV1}
\tau u_{ttt}+u_{tt}-\dfrac{E}{\rho}( u_{xx}+\beta u_{txx})=0,
\end{equation*}
where $\rho$ represents the longitudinal density of the material. This equation is obtained thinking our material as a sequence of increasingly series-coupled systems of \eqref{Constitutive_Equation} type, that is, a continuous model with \eqref{Constitutive_Equation} as single component (see Chapter 6 of \cite{Davis} or Section 2 of \cite{PSM2004} for a similar deduction on different models).

In \cite{Gorain2010}, \cite{MR2013} or \cite{P-SM-2015} it is assumed that $0<\tau<\beta$ (dissipative system), with $\tau,\beta$ being small constants. Observe that in both descriptions of the parameters \eqref{parameters1} and \eqref{parameters2} this is a natural assumption: in \eqref{parameters1} it is fulfilled unless $\eta=0$ (no dashpot), $E_1=0$ (only the Kelvin-Voigt sytem) or $E_2=\infty$ (infinitely rigid second spring) ,while in \eqref{parameters1} it is fulfilled unless $\eta=0$ (no dashpot) or $E_1=\infty$ (infinitely rigid first spring). In all these particular cases, we would obtain the conservative case $\tau=\beta$. The case $\tau>\beta$ is treated in \cite{Conejero}, where the authors show the chaotic behaviour of the corresponding solution.

The initial boundary value problem associated to \eqref{MGT_2} has been studied recently by many authors in bounded domains. In \cite{Kaltenbacher_2011} (see also \cite{Kaltenbacher_2012}), the authors considered the linearized equation
\begin{equation}\label{MGT_22}
\tau u_{ttt}+\alpha u_{tt}+c^{2}\mathcal{A} u+b \mathcal{A} u_{t}=0
\end{equation}
where $\mathcal{A}$ is a positive self-adjoint operator, and  showed  that by
neglecting diffusivity of the sound coefficient ($b=0$) there arises a lack of
existence of a semigroup associated with the linear dynamics. On the other hand, they showed that when the diffusivity of the
sound is strictly positive ($b>0$), the linear dynamics are described by a strongly
continuous semigroup, which is exponentially stable provided that $\gamma=\alpha-\tau c^2/b>0$, while if $\gamma=0$ the energy is conserved (the same type of results are obtained in \cite{MR2013} or \cite{Gorain2010} using energy methods, or in \cite{Trigg_et_al} using the analysis of the spectrum of the operator). The exponential decay rate results in \cite{Trigg_et_al} are completed in \cite{P-SM-2015}, where the obtention of an explicit scalar product where the operator is normal allows the authors to obtain the optimal exponential decay rate of the solutions. Finally, in \cite{Conejero}, the authors show the caotic behaviour of the system when $\gamma>0$, as we have mentioned above.

Observe that in this third order in time equation, the strong damping term $b\mathcal{A}u_t$ is responsible for the well-posedness of the problem, while in the wave equation with strong damping ($\tau=0$) the strong damping term is responsible for the analiticity of the semigroup.

%\textcolor{blue}{I am not sure in how to include the following paragraph and reference. Maybe it is not necessary now?} In \cite{Kaltenbacher_2012}, a nonlinearity of the form
%$$\frac{\partial}{\partial t}\left(\frac{1}{c^2}\Big(1+\frac{B}{2A}\Big)(u_t)^2\right)$$
%on the right-hand side of \eqref{MGT_1} has been considered. Equation \eqref{MGT_1} with this type of nonlinearity is usually called Jordan--Moore--Gibson--Thompson--Westervelt equation. Under appropriate assumptions on the parameters in the system, they showed the global well-posedness and the exponential decay for the solutions.

We also mention the recent paper  \cite{Lasiecka_2015_1} where the authors consider \eqref{MGT_22} with a memory damping term and show an exponential decay of the energy provided that the kernel is exponentially decaying. This result is generalized in \cite{Lasiecka_2015_2}, where it is shown that the memory kernel decay determines the solution decay.

To the best of our knowledge, these equations have not been studied yet in an unbounded domain. So, the goal of this paper is to show the well-posedness and investigate the decay rate of the solutions of the Moore--Gibson--Thompson equation in an unbounded domain. Namely,
we consider the equation
\begin{equation}\label{Bose_Problem}
\tau u_{ttt}+u_{tt}-c^{2}\Delta  u-c^{2}\beta \Delta u_{t}=0  \quad \textrm{ in }\mathbb{R}^{N},\quad t>0,
\end{equation}
with the following initial data
\begin{equation}
u\left( x,0\right) =u_{0}\left( x\right) ,\quad u_{t}\left( x,0\right)
=u_{1}\left( x\right) ,\quad u_{tt}\left( x,0\right) =u_{2}\left( x\right).
\label{Initial_data_1}
\end{equation}%

In order to state and prove our results, let us first and
without loss of generality, take  $c=1$. In addition, we assume that $0<\tau <\beta$, which corresponds to the dissipative case.

We first observe the well-posedness of \eqref{Bose_Problem} with initial data in $H^s(\R^N)$, $s\geq 1$. This can be seen using the result of \cite{Kaltenbacher_2011} (also \cite{Trigg_et_al}) after applying the following change of variables to the equation \eqref{Bose_Problem}:
$$ u=e^{-\gamma t}\,v$$
with $\gamma>0$. After this change of variables we obtain the equation
\begin{equation}\label{eq:change}
\tau v_{ttt}+\gamma_0 v_{tt}-\gamma_1\Delta  v-\gamma_2 \Delta v_{t} + \gamma_3 v + \gamma_4 v_{t}=0  \quad \textrm{ in }\mathbb{R}^{N},\quad t>0,
\end{equation}
with
$$\gamma_0=1-3\gamma\tau,\
\gamma_1= c^2(1-\beta\gamma),\
\gamma_2=c^2\beta,\ \gamma_3=\gamma^2(1-\gamma\tau),\
\gamma_4= -\gamma(2-3\gamma\tau).$$
Observe that $\gamma_i>0$ for $i=1,2, 3$ if $\gamma$ is small enough. After rearranging terms, equation \eqref{eq:change} can be written as
\begin{equation}\label{eq:wellposedness}
\tau v_{ttt} + \gamma_0 v_{tt} -\gamma_1\left( \Delta v-\dfrac{\gamma_3}{\gamma_1} v\right)-\gamma_2\left( \Delta v-\frac{\gamma_3}{\gamma_1} v\right) +\left( \gamma_4-\dfrac{\gamma_2\gamma_3}{\gamma_1}\right) v_t =0.
\end{equation}
Observe that, if $\gamma$ is small enough, $\mathcal{A}= -\left( \Delta -\dfrac{\gamma_3}{\gamma_1} Id \right)$ is a positive and self-adjoint operator in $H^1(\R^N)$ and $\gamma_1,\gamma_2>0$. Observe also  that $\left( \gamma_4-\dfrac{\gamma_2\gamma_3}{\gamma_1}\right) v_t $ can be seen as a bounded perturbation term. So, the hypotheses of \cite{Kaltenbacher_2011} are satisfied and, hence, \eqref{Bose_Problem} is a well-posed problem in $H^1(\R^N)$ (and, by linearity, in $H^s(\R^N)$ for $s\geq 1$).

%\textcolor{blue}{
%QUESTION: Do we need $L^1$ for the well-posedness, or we need it for the decay estimates?
%\begin{remark}
%In \cite{Trigg_et_al}, the well-posedness of the Moore-Gibson-Thompson equation is proved in several functional spaces. In our case of an unbounded domain, our decay estimates of the solutions will be obtained in $L^1(\R^N)\cap  H^s(\R^N)$ , $s\geq 1$. (MAYBE NOT NECESSARY THIS REMARK?)
%\end{remark}
%}

The two main results we obtain for the decay rate of this equation can be seen in Theorems \ref{Theorem_Main} and \ref{Theorem_Solution_Energy_Space} (and \ref{Theorem_N_3}) below. First, in Section \ref{sec:Energy} and using the energy method in the Fourier space, we show that, if $\tau<\beta$, the energy norm of $V$ and of its higher-order derivatives $ \partial_x^jV$, with $V=(\tau u_{tt}+u_t , \partial_x(\tau u_t+ u),\partial_x u_t)$, decay as
\begin{equation}\label{Estimate_Main_Theorem_0}
\Vert \partial_x^jV(t)\Vert_{L^2}\leq C(1+t)^{-N/4-j/2}\Vert V_0\Vert_{L^1}+e^{-ct}\Vert \partial_x^j V_0\Vert_{L^2},
\end{equation}
for a certain $c>0$ (see Theorem \ref{Theorem_Main}). The decay rate in  \eqref{Estimate_Main_Theorem_0} is a direct consequence of the estimate of the Fourier image $\hat{V}(x,t)$:
\begin{equation*}\label{Main_Estimate_V_Fourier_0}
|\hat{V}(\xi,t)|^2\leq Ce^{-c\rho(\xi)t}|\hat{V}(\xi,0)|^2,\qquad \rho(\xi)=\frac{|\xi|^2}{1+|\xi|^2}
\end{equation*}
which we derive using the Lyapunov functional method (see Proposition \ref{Main_estimate}).  This method is a very powerful tool in proving the decay rate of the energy norm (see \cite{IHK08,Ike04}). But, unfortunately,
it is obvious that the estimate \eqref{Estimate_Main_Theorem_0} does not give us any information about the decay of the solution $u(x,t)$ itself. It only gives us the decay rate of the energy norm which eventually follows the decay rate of the slowest component of the vector $V$. So, to overcome this limitation of the Lyapunov functional method, we use the eigenvalues expansion method, which is based essentially on the behavior of the eigenvalues of the equation in the Fourier space. In Section \ref{sec:eigenvalues} we give a complete description of the solutions of the characteristic equation of the corresponding operator and use it to divide the frequency domain into three main parts (low frequency, middle frequency and high frequency regions) and estimate the Fourier image of the solution in each region. In Section \ref{sec:decay} and using this method, we are able to prove our second main result, which is the following decay rate of the solution of \eqref{Bose_Problem}-\eqref{Initial_data_1} when $0<\tau<\beta$ with initial data in $L^1(\R^N)\cap H^s(\R^N)$, $s\geq 1$ (see Theorem \ref{Theorem_Solution_Energy_Space}):
\begin{eqnarray}\label{Estimate_N_2_0}
\left\Vert \partial _{x}^{j}u\left( t\right) \right\Vert _{L^{2}}&\leq& C(\Vert u_0 \Vert_{L^1}+\Vert u_1 \Vert_{L^1}+\Vert u_2\Vert_{L^1})(1+t)^{1-N/4-j/2}\notag\\
&&+C(\Vert\partial_x^j  u_0\Vert_{L^2}+\Vert\partial_x^j  u_1\Vert_{L^2}+\Vert \partial_x^j u_2\Vert_{L^2})e^{-ct}
\end{eqnarray}
(for certain $c,C>0$ and $0\leq j\leq s$)  and even better estimates if $N+j\geq 3$ (see Theorem \ref{Theorem_N_3}):
\begin{eqnarray}\label{Estimate_N_3_0}
\left\Vert \partial _{x}^{j}u\left( t\right) \right\Vert _{L^{2}}&\leq& C(\Vert u_0 \Vert_{L^1}+\Vert u_1 \Vert_{L^1}+\Vert u_2\Vert_{L^1})(1+t)^{-(N-2)/4-j/2}\notag\\
&&+C(\Vert\partial_x^j  u_0\Vert_{L^2}+\Vert\partial_x^j  u_1\Vert_{L^2}+\Vert \partial_x^j u_2\Vert_{L^2})e^{-ct}.
\end{eqnarray}
For initial data in the weighted  space $L^{1,1}(\R^N)\cap H^s(\R^N)$, $s\geq 1$, the above estimates will be improved (see Theorem \ref{Theorem_L_1_1_data} for more details).

To summarize, the remaining part of this paper is organized as follows. In Section \ref{sec:Energy} we use the energy method in the Fourier space to build an appropriate Lyapunov functional, that is used to derive the decay rate of the energy norm explained above. Section \ref{sec:eigenvalues} is devoted to the eigenvalues expansion method, that is used in Section \ref{sec:decay} to derive the decay rate of the solution and its spacial derivatives.

%Now, we give some lemmas that will be used in the sequel.
%
%
%
%In the next lemma, we give a general method of finding the exponential of a matrix by using  only the eigenvalues of the matrix and by avoiding the Jordan canonical form.
%\begin{lemma}[\cite{Putzer_1966}]\label{Theorem_Exponential_avoid_Jordan}
%Assume that $A$ is an $n\times n$ matrix with the eigenvalues $\lambda_1,\lambda_2,...,\lambda_n$ (real or complex and not necessary distinct) written in some arbitrary but specified order. Then
%\begin{equation}\label{Formula_Exponential_No_Jordan}
%e^{tA}=\sum_{j=0}^{n-1} r_{j+1}(t)P_j
%\end{equation}
%where
%\begin{equation*}
%P_0=I,\qquad P_j=\prod_{k=1}^j
%(A-\lambda_jI),\qquad j=1,...,n
%\end{equation*}
%and $r_1(t),...,r_n(t)$ are (real or complex) functions  defined by the following equations
%\begin{equation}\label{Functions_r_j}
%\left\{
%\begin{array}{ll}
%r^\prime_1(t)=\lambda_1r_1(t),&\qquad r_1(0)=1,\vspace{0.2cm}\\
%r_2^\prime(t)=\lambda_2r_2(t)+r_1(t),&\qquad r_2(0)=0,\vspace{0.2cm}\\
%\vdots\vspace{0.2cm}\\
%r_n^\prime(t)=\lambda_nr_n(t)+r_{n-1}(t),&\qquad r_{n}(0)=0.
%\end{array}
%\right.
%\end{equation}
%\end{lemma}

%\textcolor{blue}{It is worth to remark that there is no loss in the regularity in this problem, since we are using the $L^1$ norm and the order of the derivatives is the same on both sides of the inequality. }

\vspace{1cm}

%In order to state and prove our results, let us first and
%without loss of generality, take  $c=1$. In addition, we assume that $\tau <\beta$. This corresponds to the dissipative case.

%\textcolor{blue}{
%\begin{lemma}\label{Lemma_Inequality}
%Let $\tau >0,\,c>0$ and $k\geq 0$, then, we have
%\begin{equation}\label{Integral_Estimate_0}
%\int_0^\tau r^ke^{-cr^2t}dr\leq C(1+t)^{-(k+1)/2},
%\end{equation}
%for all $t\geq 0$, where $C$ is a positive constant independent of $t$.
%\end{lemma}
%\begin{proof}
%First, for $0\leq t \leq 1$, the estimate \eqref{Integral_Estimate_0} is obvious. Second, for $t\geq 1$, we have
%\begin{equation}\label{t_Estimate}
%(1+t)\leq 2t.
%\end{equation}
%On the other hand, by using \eqref{t_Estimate} and the change of variables $z=cr^2t$,
%\begin{eqnarray*}
%2^{-(k+1)/2}c^{(k+1)/2}(1+t)^{(k+1)/2}\int_0^\tau r^ke^{-cr^2t}dr&\leq &c^{(k+1)/2}t^{(k+1)/2}\int_0^\tau r^ke^{-cr^2t}dr\\
%&=& \int_0^\tau  (cr^2 t)^{k/2}e^{-cr^2t}(ct)^{1/2} dr\\
%&=&\frac{1}{2} \int_0^{c\tau ^2 t}  (z)^{k/2}e^{-z}z^{-1/2} dz\\
%&\leq& \frac{1}{2} \int_0^{\infty}  z^{(k+1)/2-1}e^{-z}dz\\
%&=& \frac{1}{2}\Gamma\left(\frac{k+1}{2}\right)< \infty,
%\end{eqnarray*}
%where $\Gamma$ is the Gamma function. This yields \eqref{Integral_Estimate_0} and completes the proof of Lemma \ref{Lemma_Inequality}.
%\end{proof}
%}
%%%%%%%%%%%%%%%%
\section{Energy method in the Fourier space}\label{sec:Energy}
In this section, we apply the energy method in the Fourier space to show the decay rate of the energy-norm of $V=(\tau u_{tt}+u_t , \partial_x(\tau u_t+ u),\partial_x u_t)$, where $u(x,t)$ is the solution of \eqref{Bose_Problem}-\eqref{Initial_data_1}.

First, we can write the problem in the Fourier space taking the Fourier transform of equation \eqref{Bose_Problem} and the initial data \eqref{Initial_data_1}. We then obtain the following ODE initial value problem:
\begin{equation}\label{Bose_Problem_Linear_Fourier}
\tau \hat{u}_{ttt}+\hat{u}_{tt}+|\xi|^2\hat{u}+\beta |\xi|^2\hat{u_t}=0
\end{equation}
and
\begin{equation}\label{Initial_data_Fourier}
\hat{u}\left( \xi,0\right) =u_{0}\left( \xi\right) ,\quad \hat{u}_{t}\left( \xi,0\right)
=\hat{u}_{1}\left( \xi\right) ,\quad \hat{u}_{tt}\left( \xi,0\right) =\hat{u}_{2}\left( \xi \right)
\end{equation}
with $\xi\in\R^N$. Introducing the new variables
\begin{equation*}
\hat{v}=\hat{u}_t \qquad \text{ and } \qquad  \hat{w}=\hat{u}_{tt},
\end{equation*}
the previous ODE can be rewritten as the following first order system
\begin{equation}\label{System_New}
\left\{
\begin{array}{ll}
\hat{u}_t=\hat{v},\vspace{0.2cm}\\
\hat{v}_t=\hat{w},\vspace{0.2cm}\\
\hat{w}_t=-\dfrac{|\xi|^2}{\tau }\hat{u}-\dfrac{\beta |\xi|^2}{\tau }\hat{v}-\dfrac{1}{\tau }\hat{w}.
\end{array}
\right.
\end{equation}

We can write the previous system in a matrix form as
\begin{equation}\label{Matrix_Form}
\hat{U}_t(\xi,t)=\Phi(\xi) \hat{U}(\xi,t),
\end{equation}
with the initial data
\begin{equation*}\label{Initial_data}
\hat{U}_0(\xi)=\hat{U}(\xi,0),
\end{equation*}
where $\hat{U}(\xi,t)=(\hat{u}(\xi,t),\hat{v}(\xi,t),\hat{w}(\xi,t))^T$ and
\begin{equation}\label{LA}
\Phi(\xi)= L+|\xi|^2 A =
\left(
\begin{array}{ccc}
0 & 1 & 0 \\
0 & 0 & 1 \\
0 & 0 & -\dfrac{1}{\tau }
\end{array}%
\right)
+ |\xi|^2
\left(
\begin{array}{ccc}
0 & 0 & 0 \\
0 & 0 & 0 \\
-\dfrac{1}{\tau } & -\dfrac{\beta }{\tau } & 0
\end{array}%
\right).
\end{equation}

Now we define the vector  $V=(\tau u_{tt}+u_t , \partial_x (\tau u_t+ u),\partial_x u_t)$. Thus, the pointwise estimate of the Fourier image of $V$ reads as follows.

\begin{proposition}\label{Main_estimate}
Let $\hat{u}$ be the solution of \eqref{Bose_Problem_Linear_Fourier}-\eqref{Initial_data_Fourier}.  Assume that $\tau<\beta $. Then, the Fourier image of the above vector $V$ satisfies the estimate
\begin{equation}\label{Main_Estimate_V_Fourier}
|\hat{V}(\xi,t)|^2\leq Ce^{-c\rho(\xi)t}|\hat{V}(\xi,0)|^2,
\end{equation}
for all $t\geq 0$ and certain $c,C>0$, where
\begin{equation}\label{rho_function}
\rho(\xi)=\frac{|\xi|^2}{1+|\xi|^2}.
\end{equation}

\end{proposition}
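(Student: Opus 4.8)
The plan is to reduce the third–order scalar equation to a first–order system for $\hat V=(\hat V_1,\hat V_2,\hat V_3)$ and then build a frequency–weighted Lyapunov functional whose dissipation reproduces the rate $\rho(\xi)$. Writing $V_1=\tau u_{tt}+u_t$, $V_2=\partial_x(\tau u_t+u)$, $V_3=\partial_x u_t$ and using $\partial_t V_1=\tau u_{ttt}+u_{tt}=\Delta u+\beta\Delta u_t=\nabla\!\cdot\!V_2+(\beta-\tau)\nabla\!\cdot\!V_3$ together with $u_{tt}=\tau^{-1}(V_1-u_t)$, I obtain in the Fourier space the system
\[
\partial_t\hat V_1=i\xi\cdot\hat V_2+(\beta-\tau)\,i\xi\cdot\hat V_3,\qquad
\partial_t\hat V_2=i\xi\,\hat V_1,\qquad
\partial_t\hat V_3=\tfrac1\tau\,i\xi\,\hat V_1-\tfrac1\tau\hat V_3 .
\]
Since $V_2$ and $V_3$ are gradients, $\hat V_2$ and $\hat V_3$ are parallel to $\xi$, so $|\xi\cdot\hat V_2|=|\xi|\,|\hat V_2|$ and $|\xi\cdot\hat V_3|=|\xi|\,|\hat V_3|$, a fact I use freely below.

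First I introduce the base energy $E_0=\tfrac12|\hat V_1|^2+\tfrac12|\hat V_2|^2+\tfrac{\tau(\beta-\tau)}2|\hat V_3|^2$. A direct computation, in which the $\hat V_1$–$\hat V_2$ and $\hat V_1$–$\hat V_3$ cross terms cancel thanks to the choice of the weights (in particular $\tau(\beta-\tau)$ on $|\hat V_3|^2$), gives $\frac{d}{dt}E_0=-(\beta-\tau)|\hat V_3|^2$. The hypothesis $\tau<\beta$ enters here twice: it makes $E_0$ equivalent to $|\hat V|^2$ and makes this derivative nonpositive. Since this controls only $\hat V_3$, I add correctors: the cross functionals $G_1=\Re\big(i\xi\cdot\hat V_2\,\overline{\hat V_1}\big)$ and $G_2=\Re\big(i\xi\cdot\hat V_3\,\overline{\hat V_1}\big)$, both satisfying $|G_1|,|G_2|\le|\xi|\,|\hat V|^2$. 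Differentiating along the system yields, schematically,
\[
\tfrac{d}{dt}G_1=|\xi|^2|\hat V_2|^2-|\xi|^2|\hat V_1|^2+(\beta-\tau)R_1,\qquad
\tfrac{d}{dt}G_2=-\tfrac{|\xi|^2}{\tau}|\hat V_1|^2+(\beta-\tau)|\xi|^2|\hat V_3|^2+R_2-\tfrac1\tau G_2,
\]
with $R_1,R_2$ being $\hat V_2$–$\hat V_3$ cross terms bounded by $|\xi|^2|\hat V_2||\hat V_3|$. Thus $-G_1$ supplies the missing dissipation $-|\xi|^2|\hat V_2|^2$ (at the cost of exciting $\hat V_1$), while $G_2$ supplies $-\tau^{-1}|\xi|^2|\hat V_1|^2$.

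I then define $\mathcal L=N E_0+\frac{1}{1+|\xi|^2}\big(bG_2-aG_1\big)$. The weight $\frac{1}{1+|\xi|^2}$ combined with the factor $|\xi|$ in $G_1,G_2$ gives $\frac{|\xi|}{1+|\xi|^2}\le\frac12$, so the correctors are $\lesssim|\hat V|^2$ and, for $N$ large, $\mathcal L$ is equivalent to $|\hat V|^2$. Collecting terms and using $\frac{|\xi|^2}{1+|\xi|^2}=\rho(\xi)$, the leading dissipation is $-a\rho|\hat V_2|^2-\frac b\tau\rho|\hat V_1|^2-N(\beta-\tau)|\hat V_3|^2$. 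I fix the constants in order — first $a$, then $b>a\tau$ so that the $\hat V_1$ dissipation from $G_2$ beats the excitation $+a\rho|\hat V_1|^2$ coming from $-G_1$, and finally $N$ large so that $NE_0$ absorbs the remaining cross terms (all involving $\hat V_3$, using $\rho\le1$) — to reach $\frac{d}{dt}\mathcal L\le-c\,\rho(\xi)\,|\hat V|^2\le-c'\rho(\xi)\,\mathcal L$. Gronwall's inequality together with the equivalence $\mathcal L\sim|\hat V|^2$ then yields \eqref{Main_Estimate_V_Fourier}.

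The main obstacle is the simultaneous treatment of low and high frequencies: the damping acts directly only on $\hat V_3$, while the correctors needed to recover dissipation in $\hat V_1$ and $\hat V_2$ unavoidably carry a factor $|\xi|$. Compensating this uniformly in $\xi$ is exactly what forces the weight $\frac{1}{1+|\xi|^2}$, turning the naive $|\xi|^2$ decay into the correct function $\rho(\xi)=\frac{|\xi|^2}{1+|\xi|^2}$; checking that every cross term can indeed be absorbed, in the stated order of the constants $a,b,N$, is the delicate bookkeeping of the argument.
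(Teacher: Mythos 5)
Your proof is correct and is essentially the paper's own argument in different notation: your $E_0$ coincides exactly with the paper's energy $\hat{E}$ in \eqref{Energy_Fourier} (with the same dissipation identity \eqref{Energy_Indentity}), and your correctors satisfy $-G_1=|\xi|^2F_1$ and $G_2=\frac{|\xi|^2}{\tau}F_2$ with $F_1,F_2$ as in \eqref{F_1_Functional}--\eqref{F_2_Functional}, so your $\mathcal{L}$ is the paper's Lyapunov functional \eqref{Lyapunov} up to relabeling of constants. The weight $\frac{1}{1+|\xi|^2}$, the ordered choice of constants, the equivalence $\mathcal{L}\sim|\hat{V}|^2$ and the Gronwall step all match the paper's proof.
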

The proof of Proposition \ref{Main_estimate} will be given through some lemmas, where a certain Lyapunov functional is obtained and used.
First, we may rewrite system \eqref{System_New} as
\begin{equation}\label{System_New_2}
\left\{
\begin{array}{ll}
\hat{u}_t=\hat{v},\vspace{0.2cm}\\
\hat{v}_t=\hat{w},\vspace{0.2cm}\\
\tau \hat{w}_t=-|\xi|^2\hat{u}-\beta |\xi|^2\hat{v}-\hat{w}.
\end{array}
\right.
\end{equation}
\begin{lemma}\label{Lemma_Energy_1}
The energy functional associated to system \eqref{System_New_2} is
\begin{equation}\label{Energy_Fourier}
\hat{E} (\xi,t)=\frac{1}{2}\left\{|\hat{v}+\tau \hat{w}|^2+\tau (\beta-\tau )|\xi|^2|\hat{v}|^2+|\xi|^2|\hat{u}+\tau \hat{v}|^2\right\}
\end{equation}
and satisfies, for all $t\geq 0$, the identity
\begin{equation}\label{Energy_Indentity}
\frac{d}{dt}\hat{E} (\xi,t)=-(\beta-\tau )|\xi|^2|\hat{v}|^2.
\end{equation}
\end{lemma}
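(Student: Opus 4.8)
The plan is to verify the identity \eqref{Energy_Indentity} by differentiating $\hat E(\xi,t)$ directly along solutions of \eqref{System_New_2}. For a fixed $\xi$, the quantities $\hat u,\hat v,\hat w$ are complex-valued functions of $t$ while $|\xi|^2$ is a constant, so I will use throughout that $\frac{d}{dt}|f|^2=2\Re(\overline f\,f_t)$ and hence $\frac{d}{dt}\tfrac12|f|^2=\Re(\overline f\,f_t)$ for a complex function $f(t)$.

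First I would record the time derivatives of the two composite quantities occurring in \eqref{Energy_Fourier}. Writing $P=\hat v+\tau\hat w$ and $Q=\hat u+\tau\hat v$, the first two equations of \eqref{System_New_2} give
\[
Q_t=\hat u_t+\tau\hat v_t=\hat v+\tau\hat w=P,
\]
while inserting $\hat v_t=\hat w$ and the third equation $\tau\hat w_t=-|\xi|^2\hat u-\beta|\xi|^2\hat v-\hat w$ yields
\[
P_t=\hat v_t+\tau\hat w_t=\hat w+\bigl(-|\xi|^2\hat u-\beta|\xi|^2\hat v-\hat w\bigr)=-|\xi|^2(\hat u+\beta\hat v).
\]
Differentiating \eqref{Energy_Fourier} and using $\hat v_t=\hat w$ in the middle term then gives
\[
\frac{d}{dt}\hat E=\Re(\overline P\,P_t)+\tau(\beta-\tau)|\xi|^2\,\Re(\overline{\hat v}\,\hat w)+|\xi|^2\,\Re(\overline Q\,Q_t).
\]

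The heart of the argument is to combine the first and third terms. Substituting $P_t=-|\xi|^2(\hat u+\beta\hat v)$ and $Q_t=P$, and using the elementary identity $\Re(\overline P\,w)=\Re(\overline w\,P)$, I get
\[
\Re(\overline P\,P_t)+|\xi|^2\Re(\overline Q\,Q_t)=|\xi|^2\,\Re\!\left(\overline{\bigl(Q-(\hat u+\beta\hat v)\bigr)}\,P\right).
\]
Since $Q-(\hat u+\beta\hat v)=(\tau-\beta)\hat v$, the right-hand side equals $(\tau-\beta)|\xi|^2\Re(\overline{\hat v}\,P)=(\tau-\beta)|\xi|^2\bigl(|\hat v|^2+\tau\,\Re(\overline{\hat v}\,\hat w)\bigr)$. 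Adding back the middle term $\tau(\beta-\tau)|\xi|^2\Re(\overline{\hat v}\,\hat w)$, the two cross terms in $\Re(\overline{\hat v}\,\hat w)$ cancel exactly and leave $\frac{d}{dt}\hat E=-(\beta-\tau)|\xi|^2|\hat v|^2$, which is \eqref{Energy_Indentity}.

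I expect the only genuine difficulty to be the bookkeeping of complex conjugations and real parts; the single decisive observation is that the coefficient $\tau(\beta-\tau)|\xi|^2$ multiplying $|\hat v|^2$ in \eqref{Energy_Fourier} is exactly what is needed to annihilate the indefinite cross term $\Re(\overline{\hat v}\,\hat w)$. This is precisely the reason for the special form of $\hat E$, and one can motivate it from scratch by multiplying the scalar equation $\tau\hat u_{ttt}+\hat u_{tt}+|\xi|^2\hat u+\beta|\xi|^2\hat u_t=0$ by $\overline P=\overline{(\tau\hat u_{tt}+\hat u_t)}$ and taking real parts, which reproduces both $\hat E$ and the dissipation term. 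The sign $\beta>\tau$ then makes $\frac{d}{dt}\hat E\le 0$ transparent.
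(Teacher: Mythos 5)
Your computation is correct and follows essentially the same route as the paper: both differentiate the three squared terms of $\hat E$ along the flow, using $(\hat u+\tau\hat v)_t=\hat v+\tau\hat w$ and $(\hat v+\tau\hat w)_t=-|\xi|^2(\hat u+\beta\hat v)$, and both rely on the coefficient $\tau(\beta-\tau)|\xi|^2$ to cancel the cross term $\Re(\overline{\hat v}\,\hat w)$. The only difference is cosmetic: you keep $P$ and $Q$ symbolic and cancel in one step, whereas the paper expands each product into individual real parts before summing.
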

\begin{proof}

Summing up the second and the third equation in \eqref{System_New_2} we get
\begin{equation}\label{Sum_Equations}
(\hat{v}+\tau \hat{w})_t=-|\xi|^2\hat{u}-\beta |\xi|^2\hat{v}.
\end{equation}
Multiplying \eqref{Sum_Equations} by $\bar{\hat{v}}+\tau \bar{\hat{w}}$ and taking the real parts, we obtain,
\begin{equation}\label{w_estimate}
\frac{1}{2}\frac{d}{dt} |\hat{v}+\tau \hat{w}|^2=-\tau |\xi|^2\func{Re}(\hat{u}\bar{\hat{w}})-\beta\tau |\xi|^2\func{Re}(\hat{v}\bar{\hat{w}})-|\xi|^2\func{Re}(\hat{u}\bar{\hat{v}})-\beta|\xi|^2|\hat{v}|^2.
\end{equation}
Next, multiplying the second equation in \eqref{System_New_2} by $\tau (\beta-\tau )\bar{\hat{v}}$ and taking the real part, we get
\begin{equation}\label{v_Estimate}
\frac{1}{2}\tau (\beta-\tau )\frac{d}{dt}|\hat{v}|^2=\tau (\beta-\tau )\func{Re}(\hat{w}\bar{\hat{v}}).
\end{equation}
Now, multiplying the second equation in \eqref{System_New_2} by $\tau $ and adding the result to the first equation, we obtain
\begin{equation}\label{Equation_u_v}
(\hat{u}+\tau \hat{v})_t=\tau \hat{w}+\hat{v}.
\end{equation}
Multiplying \eqref{Equation_u_v} by $\bar{\hat{u}}+\tau \bar{\hat{v}}$ and taking the real parts, we get
\begin{equation}\label{u_v_derivative}
\frac{1}{2}\frac{d}{dt}|\hat{u}+\tau \hat{v}|^2=\tau \func{Re}(\hat{w}\bar{\hat{u}})+\tau ^2\func{Re}(\hat{w}\bar{\hat{v}})+\func{Re}(\hat{v}\bar{\hat{u}})+\tau |\hat{v}|^2.
\end{equation}
Now, computing $|\xi|^2\eqref{u_v_derivative}+|\xi|^2\eqref{v_Estimate}+\eqref{w_estimate}$, we obtain \eqref{Energy_Indentity}, which finishes the proof of Lemma \ref{Lemma_Energy_1}.
\end{proof}

Now, we define the functional $F_1(\xi,t)$ as
\begin{equation}\label{F_1_Functional}
F_1(\xi,t)=\func{Re}\left\{ (\bar{\hat{u}}+\tau \bar{\hat{v}})(\hat{v}+\tau \hat{w})\right\}.
\end{equation}
Then, we have the following lemma.
\begin{lemma}\label{Lemma_F_1}
For any $\epsilon_0>0$, we have
\begin{equation}\label{dF_1_dt}
\frac{d}{dt}F_1(\xi,t)+(1-\epsilon_0)|\xi|^2|\hat{u}+\tau \hat{v}|^2\leq |\hat{v}+\tau \hat{w}|^2+C(\epsilon_0)|\xi|^2|\hat{v}|^2.
\end{equation}
\end{lemma}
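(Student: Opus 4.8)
The plan is to compute $\frac{d}{dt}F_1$ directly, exploiting the two combined equations already derived in the excerpt, and then to tame the single resulting cross term with Young's inequality. To lighten the notation I would abbreviate $P=\hat{u}+\tau\hat{v}$ and $Q=\hat{v}+\tau\hat{w}$, so that $F_1=\func{Re}\{\bar{P}Q\}$. The virtue of these abbreviations is that the evolution of $P$ and $Q$ is precisely what equations \eqref{Equation_u_v} and \eqref{Sum_Equations} record: $P_t=Q$ and $Q_t=-|\xi|^2\hat{u}-\beta|\xi|^2\hat{v}$.

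First I would differentiate. Since $P_t=Q$, the product rule gives $\frac{d}{dt}F_1=\func{Re}\{\bar{Q}Q\}+\func{Re}\{\bar{P}Q_t\}=|Q|^2+\func{Re}\{\bar{P}Q_t\}$. This already produces, for free, the $|\hat{v}+\tau\hat{w}|^2$ term appearing on the right-hand side of \eqref{dF_1_dt}, so the whole matter reduces to understanding $\func{Re}\{\bar{P}Q_t\}$.

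The key algebraic step — and really the only one that needs a moment's thought — is to rewrite $Q_t$ so that the desired negative multiple of $|P|^2$ is made explicit. Writing $-|\xi|^2\hat{u}-\beta|\xi|^2\hat{v}=-|\xi|^2(\hat{u}+\tau\hat{v})-(\beta-\tau)|\xi|^2\hat{v}=-|\xi|^2 P-(\beta-\tau)|\xi|^2\hat{v}$, I obtain $\func{Re}\{\bar{P}Q_t\}=-|\xi|^2|P|^2-(\beta-\tau)|\xi|^2\func{Re}\{\bar{P}\hat{v}\}$. Substituting this back and moving $|\xi|^2|P|^2$ to the left yields the identity $\frac{d}{dt}F_1+|\xi|^2|P|^2=|Q|^2-(\beta-\tau)|\xi|^2\func{Re}\{\bar{P}\hat{v}\}$.

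Finally I would absorb the cross term. By Young's inequality, for every $\epsilon_0>0$ one has $(\beta-\tau)|\xi|^2|\func{Re}\{\bar{P}\hat{v}\}|\le\epsilon_0|\xi|^2|P|^2+\frac{(\beta-\tau)^2}{4\epsilon_0}|\xi|^2|\hat{v}|^2$; transferring the $\epsilon_0|\xi|^2|P|^2$ back to the left converts the coefficient of $|\xi|^2|P|^2$ into $1-\epsilon_0$ and leaves exactly \eqref{dF_1_dt} with the explicit constant $C(\epsilon_0)=\frac{(\beta-\tau)^2}{4\epsilon_0}$, after recalling that $|P|^2=|\hat{u}+\tau\hat{v}|^2$ and $|Q|^2=|\hat{v}+\tau\hat{w}|^2$. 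Since the argument is a single differentiation followed by one application of Young's inequality, I do not foresee any genuine obstacle; the only care required is the splitting of $Q_t$ that isolates $|P|^2$, and noting that only $(\beta-\tau)^2$ enters the constant, so the standing dissipativity assumption $\tau<\beta$ is not even needed for this particular estimate.
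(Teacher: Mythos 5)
Your proof is correct and follows essentially the same route as the paper: both differentiate $F_1$ using the evolution equations \eqref{Equation_u_v} and \eqref{Sum_Equations}, isolate $-|\xi|^2|\hat{u}+\tau\hat{v}|^2$ by the splitting $-|\xi|^2\hat{u}-\beta|\xi|^2\hat{v}=-|\xi|^2(\hat{u}+\tau\hat{v})+(\tau-\beta)|\xi|^2\hat{v}$, and absorb the remaining cross term with Young's inequality. Your version merely packages the computation in the abbreviations $P,Q$ and additionally records the explicit constant $C(\epsilon_0)=(\beta-\tau)^2/(4\epsilon_0)$ and the (correct) observation that $\tau<\beta$ is not needed here.
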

\begin{proof}
Multiplying equation \eqref{Sum_Equations} by $\bar{\hat{u}}+\tau \bar{\hat{v}}$ and equation \eqref{Equation_u_v} by $\bar{\hat{v}}+\tau \bar{\hat{w}}$ we get, respectively,
\begin{eqnarray*}
(\hat{v}+\tau \hat{w})_t(\bar{\hat{u}}+\tau \bar{\hat{v}}) &=&(-|\xi|^2\hat{u}-\beta |\xi|^2\hat{v})(\bar{\hat{u}}+\tau \bar{\hat{v}})\\
&=&(-|\xi|^2\hat{u}-\beta |\xi|^2\hat{v}-\tau |\xi|^2\hat{v}+\tau |\xi|^2\hat{v})(\bar{\hat{u}}+\tau \bar{\hat{v}})
\end{eqnarray*}
and
\begin{equation*}
(\hat{u}+\tau \hat{v})_t(\bar{\hat{v}}+\tau \bar{\hat{w}})=(\tau \hat{w}+\hat{v})(\bar{\hat{v}}+\tau \bar{\hat{w}}).
\end{equation*}
Summing up the above two equations and taking the real part, we obtain
\begin{equation*}
\frac{d}{dt}F_1(\xi,t)+|\xi|^2|\hat{u}+\tau \hat{v}|^2-|\hat{v}+\tau \hat{w}|^2=|\xi|^2(\tau -\beta)\func{Re}(\hat{v}(\bar{\hat{u}}+\tau \bar{\hat{v}})).
\end{equation*}
Applying Young's inequality for any $\epsilon_0>0$, we obtain \eqref{dF_1_dt}. This ends the proof of Lemma \ref{Lemma_F_1}.
\end{proof}

Next, we define the functional $F_2(\xi,t)$ as
\begin{equation}\label{F_2_Functional}
F_2(\xi,t)=-\tau \func{Re}( \bar{\hat{v}}(\hat{v}+\tau \hat{w})).
\end{equation}
\begin{lemma}\label{Lemma_F_2}
For any $\epsilon_1,\epsilon_2>0$, we have
\begin{equation}\label{dF_2_dt_1}
\frac{d}{dt}F_2(\xi,t)+(1-\epsilon_1)|\hat{v}+\tau \hat{w}|^2
\leq C(\epsilon_1,\epsilon_2)(1+|\xi|^2)|\hat{v}|^2+\epsilon_2|\xi|^2|\hat{u}+\tau \hat{v}|^2.
\end{equation}

\end{lemma}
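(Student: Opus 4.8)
The plan is to mimic the proof of Lemma~\ref{Lemma_F_1}: differentiate the functional $F_2$ defined in~\eqref{F_2_Functional} in time, substitute the evolution equations from~\eqref{System_New_2}, convert the resulting identity into the desired differential inequality by a sequence of elementary rewritings, and close with Young's inequality.

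First I would expand $F_2=-\tau|\hat{v}|^2-\tau^2\func{Re}(\bar{\hat{v}}\hat{w})$ and differentiate term by term, using $\hat{v}_t=\hat{w}$ together with the third equation of~\eqref{System_New_2} (equivalently, \eqref{Sum_Equations}). This should yield an identity of the form
\[
\frac{d}{dt}F_2 = -\tau^2|\hat{w}|^2 - \tau\func{Re}(\bar{\hat{v}}\hat{w}) + \tau|\xi|^2\func{Re}(\bar{\hat{v}}\hat{u}) + \tau\beta|\xi|^2|\hat{v}|^2.
\]
The term I expect to cause trouble is $-\tau^2|\hat{w}|^2$, since $|\hat{w}|^2$ must not survive in the final estimate: the right-hand side of~\eqref{dF_2_dt_1} involves only $|\hat{v}|^2$, $|\xi|^2|\hat{v}|^2$, and $|\xi|^2|\hat{u}+\tau\hat{v}|^2$. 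The key observation is that $-\tau^2|\hat{w}|^2$ is exactly the negative-definite contribution that generates the dissipative term $-|\hat{v}+\tau\hat{w}|^2$ we want on the left-hand side. Expanding $|\hat{v}+\tau\hat{w}|^2=|\hat{v}|^2+2\tau\func{Re}(\bar{\hat{v}}\hat{w})+\tau^2|\hat{w}|^2$ and substituting for $\tau^2|\hat{w}|^2$ turns the first two terms above into $-|\hat{v}+\tau\hat{w}|^2+|\hat{v}|^2+\tau\func{Re}(\bar{\hat{v}}\hat{w})$.

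I would then eliminate the remaining $\hat{w}$ by writing $\tau\hat{w}=(\hat{v}+\tau\hat{w})-\hat{v}$, which collapses $|\hat{v}|^2+\tau\func{Re}(\bar{\hat{v}}\hat{w})$ into the single cross term $\func{Re}(\bar{\hat{v}}(\hat{v}+\tau\hat{w}))$. After moving $-|\hat{v}+\tau\hat{w}|^2$ to the left, this leaves the identity
\[
\frac{d}{dt}F_2 + |\hat{v}+\tau\hat{w}|^2 = \func{Re}(\bar{\hat{v}}(\hat{v}+\tau\hat{w})) + \tau|\xi|^2\func{Re}(\bar{\hat{v}}\hat{u}) + \tau\beta|\xi|^2|\hat{v}|^2.
\]

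Finally I would close with Young's inequality. The first term on the right is bounded by $\epsilon_1|\hat{v}+\tau\hat{w}|^2+C(\epsilon_1)|\hat{v}|^2$, and the $\epsilon_1$-part is absorbed into the left. For the second term I would insert $\hat{u}=(\hat{u}+\tau\hat{v})-\tau\hat{v}$, precisely to produce the quantity $\hat{u}+\tau\hat{v}$ demanded by~\eqref{dF_2_dt_1}, and then apply Young's inequality to obtain $\epsilon_2|\xi|^2|\hat{u}+\tau\hat{v}|^2+C(\epsilon_2)|\xi|^2|\hat{v}|^2$; the leftover $-\tau^2|\xi|^2|\hat{v}|^2$ carries a favorable sign and can simply be discarded. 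Grouping all surviving $|\hat{v}|^2$ and $|\xi|^2|\hat{v}|^2$ contributions into a single $C(\epsilon_1,\epsilon_2)(1+|\xi|^2)|\hat{v}|^2$ term then gives exactly~\eqref{dF_2_dt_1}. The only genuine subtlety is the algebraic bookkeeping in the middle step; once $|\hat{w}|^2$ has been correctly traded for the dissipative term $-|\hat{v}+\tau\hat{w}|^2$, the remaining estimates are routine.
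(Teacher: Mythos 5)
Your proposal is correct and follows essentially the same route as the paper: both derive the identity $\frac{d}{dt}F_2+|\hat{v}+\tau\hat{w}|^2=\func{Re}(\bar{\hat{v}}(\hat{v}+\tau\hat{w}))+\tau|\xi|^2\func{Re}(\bar{\hat{v}}(\hat{u}+\tau\hat{v}))+\tau(\beta-\tau)|\xi|^2|\hat{v}|^2$ (the paper by multiplying the equations for $\hat{v}$ and $\hat{v}+\tau\hat{w}$ by the appropriate conjugates and summing, you by expanding $F_2$ and differentiating term by term, which is the same computation) and then close with Young's inequality in the same way.
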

\begin{proof}
Multiplying the second equation in \eqref{System_New_2} by $-\tau (\bar{\hat{v}}+\tau \bar{\hat{w}})$ and \eqref{Sum_Equations} by $-\tau \bar{\hat{v}}$,
we obtain, respectively,
\begin{equation*}
-\tau \hat{v}_t(\bar{\hat{v}}+\tau \bar{\hat{w}})=-\tau \hat{w}(\bar{\hat{v}}+\tau \bar{\hat{w}})
\end{equation*}
and
\begin{eqnarray*}
-\tau (\hat{v}+\tau \hat{w})_t\bar{\hat{v}}&=&(\tau |\xi|^2\hat{u}+\beta\tau |\xi|^2\hat{v})\bar{\hat{v}}\\
&=& \Big(\tau |\xi|^2\hat{u}+\tau \beta |\xi|^2\hat{v}+\tau ^2|\xi|^2\hat{v}-\tau ^2|\xi|^2\hat{v}+(\hat{v}+\tau \hat{w})-(\hat{v}+\tau \hat{w})\Big)\bar{\hat{v}}.
\end{eqnarray*}
Summing up the above two equations and taking the real parts, we obtain
\begin{equation*}
\frac{d}{dt}F_2(\xi,t)+|\hat{v}+\tau \hat{w}|^2-\tau (\beta-\tau )|\xi|^2|\hat{v}|^2
=\tau |\xi|^2\func{Re}\left\{(\hat{u}+\tau \hat{v})\bar{\hat{v}}\right\}+\func{Re}\left\{(\hat{v}+\tau \hat{w})\bar{\hat{v}}\right\}.
\end{equation*}
Applying Young's inequality, we obtain the estimate \eqref{dF_2_dt_1} for any $\epsilon_1,\epsilon_2>0$.
\end{proof}
\begin{proof}[Proof of Proposition \ref{Main_estimate}]
We define the Lyapunov functional $L(\xi,t)$ as
\begin{equation}\label{Lyapunov}
L(\xi,t)=\gamma_0\hat{E}(\xi,t)+\frac{|\xi|^2}{1+|\xi|^2}F_1(\xi,t)+\gamma_1\frac{|\xi|^2}{1+|\xi|^2}F_2(\xi,t),
\end{equation}
where $\gamma_0$ and $\gamma_1$ are positive numbers that will be fixed later on.

Taking the derivative of \eqref{Lyapunov} with respect to $t$ and making use of \eqref{Energy_Indentity}, \eqref{dF_1_dt} and \eqref{dF_2_dt_1}, we obtain
\begin{eqnarray}\label{dL_dt_1}
&&\frac{d}{dt}L(\xi,t)+\Big(\gamma_1(1-\epsilon_1)-1\Big)\frac{|\xi|^2}{1+|\xi|^2}|\hat{v}+\tau \hat{w}|^2\notag\\
&&+\Big((1-\epsilon_0)-\gamma_1\epsilon_2\Big)\frac{|\xi|^2}{1+|\xi|^2}(|\xi|^2|\hat{u}+\tau \hat{v}|^2)\notag\\
&&+\Big(\gamma_0(\beta-\tau )-C(\epsilon_0)-\gamma_1C(\epsilon_1,\epsilon_2)\Big)|\xi|^2|\hat{v}|^2\leq 0,
\end{eqnarray}
where we used the fact that $|\xi|^2/(1+|\xi|^2)\leq 1$.
In the above estimate, we can fix our constants in such a way that the previous coefficients are positive. This can be achieved as follows: we pick $\epsilon_0$ and $\epsilon_1$ small enough such that $\epsilon_0<1$ and $\epsilon_1<1$. After that, we take $\gamma_1$ large enough such that
\begin{equation*}
\gamma_1>\frac{1}{1-\epsilon_1}.
\end{equation*}
Once $\gamma_1$ and $\epsilon_0$ are fixed, we select $\epsilon_2$ small enough such that
\begin{equation*}
\epsilon_2<\frac{1-\epsilon_0}{\gamma_1}.
\end{equation*}
Finally, and recalling that $\tau<\beta$, we may choose $\gamma_0$ large enough such that
\begin{equation*}
\gamma_0>\frac{C(\epsilon_0)+\gamma_1C(\epsilon_1,\epsilon_2)}{\beta-\tau }.
\end{equation*}
Consequently, we deduce that there exists a positive constant $\gamma_2$ such that for all $t\geq 0$,
\begin{eqnarray}\label{dL_dt_2}
&&\frac{d}{dt}L(\xi,t)+\gamma_2\frac{|\xi|^2}{1+|\xi|^2}\hat{E}(\xi,t)\leq 0.
\end{eqnarray}

On the other hand, it is not difficult to see that from \eqref{Lyapunov}, \eqref{Energy_Fourier}, \eqref{F_1_Functional} and \eqref{F_2_Functional} and for $\gamma_0$, large enough, that there exists two positive constants $\gamma_3$ and $\gamma_4$ such that
\begin{equation}\label{Equival_E_L}
\gamma_3\hat{E}(\xi,t)\leq L(\xi,t)\leq \gamma_4\hat{E}(\xi,t).
\end{equation}
Combining \eqref{dL_dt_2} and \eqref{Equival_E_L}, we deduce that there exists a positive constant $\gamma_5$ such that for all $t\geq 0$,
\begin{equation}\label{Estimate_L_main}
\frac{d}{dt}L(\xi,t)+\gamma_5\frac{|\xi|^2}{1+|\xi|^2}L(\xi,t)\leq 0.
\end{equation}
A simple application of Gronwall's lemma, leads to the estimate \eqref{Main_Estimate_V_Fourier}, as $L$ and the norm of $\hat{V}$ are equivalent.
\end{proof}

In order to prove our first main result, we also need the first inequality of the following lemma. The rest of it will be used to prove the decay results in the other sections.

\begin{lemma}\label{Lemma_Inequalities}
For all $t\geq 0$ and for all $j\geq 0,c> 0$, the following estimates hold:
\begin{equation}\label{Estimate_Integral}
\int_{|\xi|\leq 1} |\xi|^je^{-c|\xi|^2t}d\xi\leq C(1+t)^{-N/2-j/2},\qquad \text{for}\qquad N\geq 1.
\end{equation}
Also,
\begin{equation}\label{Estimate_Cos_N_3}
\int_{|\xi|\leq 1}|\xi|^je^{-c|\xi|^2t}\left\vert \cos (t|\xi|)\right\vert^2d\xi\leq C(1+t)^{-N/2-j/2},\qquad \text{for}\qquad N\geq 1.
\end{equation}
Moreover,
\begin{equation}\label{Estimate_Sin_N_2}
\int_{|\xi|\leq  1}|\xi|^je^{-c|\xi|^2t}\left\vert \frac{\sin (t|\xi|)}{|\xi|}\right\vert^2d\xi\leq C(1+t)^{2-N/2-j/2},\qquad \text{for}\qquad N\geq 1
\end{equation}
and
\begin{equation}\label{Estimate_Sin_N_3}
\int_{\R^N}|\xi|^je^{-c|\xi|^2t}\left\vert \frac{\sin (t|\xi|)}{|\xi|}\right\vert^2d\xi\leq Ct^{-(N-2)/2-j/2},\qquad \text{if}\qquad j+N\geq 3.
\end{equation}

%with
%$$M=\sup_{\theta\neq 0}\frac{|\sin \theta|}{\theta}<\infty.$$

\end{lemma}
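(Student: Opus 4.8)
The plan is to reduce every integral to a one-dimensional radial integral via polar coordinates and then rescale the radial variable by $\sqrt{t}$, which turns each integrand into a fixed Gaussian-type profile multiplied by a power of $t$. Writing $\xi=r\omega$ with $r=|\xi|$ and $\omega\in S^{N-1}$, so that $d\xi=r^{N-1}\,dr\,d\omega$, the left-hand side of \eqref{Estimate_Integral} equals $\omega_{N-1}\int_0^1 r^{j+N-1}e^{-cr^2t}\,dr$, where $\omega_{N-1}=|S^{N-1}|$. I would treat $0\le t\le 1$ and $t\ge 1$ separately: for $t\le 1$ the radial integral is bounded by a constant while $(1+t)^{-N/2-j/2}$ stays bounded below on $[0,1]$, so the inequality is immediate. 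For $t\ge 1$ I substitute $s=r\sqrt{t}$ to obtain $t^{-(j+N)/2}\int_0^{\sqrt{t}}s^{j+N-1}e^{-cs^2}\,ds$, and since $\int_0^\infty s^{j+N-1}e^{-cs^2}\,ds<\infty$ this is $\le Ct^{-(j+N)/2}\le C(1+t)^{-N/2-j/2}$. This establishes \eqref{Estimate_Integral}.

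Estimate \eqref{Estimate_Cos_N_3} then follows immediately from \eqref{Estimate_Integral} via the crude bound $\left\vert\cos(t|\xi|)\right\vert^2\le 1$. For \eqref{Estimate_Sin_N_2} the key observation is the elementary inequality $\bigl|\sin(t|\xi|)\bigr|\le t|\xi|$, so that $\left\vert\frac{\sin(t|\xi|)}{|\xi|}\right\vert^2\le t^2$; inserting this and applying \eqref{Estimate_Integral} gives a bound $Ct^2(1+t)^{-N/2-j/2}$, which for $t\ge 1$ is $\le C(1+t)^{2-N/2-j/2}$, the small-time range being handled as before. Thus the factor $(1+t)^2$ in \eqref{Estimate_Sin_N_2} is precisely the cost of the worst-case bound $\left\vert\frac{\sin(t|\xi|)}{|\xi|}\right\vert\le t$ near the origin.

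The genuinely different estimate is \eqref{Estimate_Sin_N_3}, which is taken over all of $\R^N$ and yields a pure power $t^{-(N-2)/2-j/2}$ rather than a $(1+t)$ power. Here I would instead use the other crude bound $\sin^2(t|\xi|)\le 1$, reducing the integral in polar coordinates to $\omega_{N-1}\int_0^\infty r^{j+N-3}e^{-cr^2t}\,dr$, and rescaling by $s=r\sqrt{t}$ to obtain $t^{-(j+N-2)/2}\int_0^\infty s^{j+N-3}e^{-cs^2}\,ds$. This last integral converges at the origin exactly when $j+N-3>-1$, i.e. when $j+N\ge 3$, which is precisely the hypothesis; the resulting exponent $-(j+N-2)/2=-(N-2)/2-j/2$ matches the claim, and convergence at infinity is free from the Gaussian.

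I expect the only real subtlety to be the choice of the correct crude bound in each case: using $\sin^2\le 1$ for \eqref{Estimate_Sin_N_3} (which isolates the convergence condition $j+N\ge 3$ at the origin and produces the sharp power of $t$ for large $t$) versus $\left\vert\frac{\sin(t|\xi|)}{|\xi|}\right\vert\le t$ for \eqref{Estimate_Sin_N_2} (which is the right choice on the bounded region $|\xi|\le 1$, where the extra factor $t^2$ is unavoidable). Swapping one bound for the other would produce the wrong power of $t$, so matching the bound to the integration region and to the stated growth is the step requiring care; the remaining scaling computations are routine.
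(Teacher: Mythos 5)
Your proposal is correct and follows essentially the same route as the paper: polar coordinates, the split between $0\le t\le 1$ and $t\ge 1$ with the rescaling $s=r\sqrt{t}$ (the paper uses the equivalent substitution $z=cr^2t$) reducing to a convergent Gamma-type integral for \eqref{Estimate_Integral}, the bounds $\left\vert\cos(t|\xi|)\right\vert\le 1$ and $\left\vert\sin(t|\xi|)\right\vert\le t|\xi|$ for \eqref{Estimate_Cos_N_3} and \eqref{Estimate_Sin_N_2}, and $\sin^2\le 1$ plus rescaling for \eqref{Estimate_Sin_N_3}. Your closing remark correctly identifies the one point of care, namely matching the crude bound on the sine to the integration region, and your convergence condition $j+N-3>-1$ at the origin is consistent with (indeed slightly sharper than) the paper's stated hypothesis $j+N\ge 3$.
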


\begin{proof}

First, to prove inequality \eqref{Estimate_Integral} we will first prove that for given $c>0$ and $k\geq 0$, we have
\begin{equation}\label{Integral_Estimate_0}
\int_0^1 r^ke^{-cr^2t}dr\leq C(1+t)^{-(k+1)/2},
\end{equation}
for all $t\geq 0$, where $C$ is a positive constant independent of $t$.
To see this, observe first that for $0\leq t \leq 1$, the estimate \eqref{Integral_Estimate_0} is obvious. On the other hand, for $t\geq 1$, we have
\begin{equation}\label{t_Estimate}
(1+t)\leq 2t.
\end{equation}
Now, by using \eqref{t_Estimate} and the change of variables $z=cr^2t$,
\begin{eqnarray*}
2^{-(k+1)/2}c^{(k+1)/2}(1+t)^{(k+1)/2}\int_0^1 r^ke^{-cr^2t}dr&\leq &c^{(k+1)/2}t^{(k+1)/2}\int_0^1 r^ke^{-cr^2t}dr\\
&=& \int_0^1  (cr^2 t)^{k/2}e^{-cr^2t}(ct)^{1/2} dr\\
&=&\frac{1}{2} \int_0^{c t}  (z)^{k/2}e^{-z}z^{-1/2} dz\\
&\leq& \frac{1}{2} \int_0^{\infty}  z^{(k+1)/2-1}e^{-z}dz\\
&=& \frac{1}{2}\Gamma\left(\frac{k+1}{2}\right)< \infty,
\end{eqnarray*}
where $\Gamma$ is the gamma function. This yields \eqref{Integral_Estimate_0}. Applying the change of variables $r=|\xi|$ and $d\xi = |\xi|^{N-1}dr$ to the left hand side of \eqref{Estimate_Integral} and using \eqref{Integral_Estimate_0}, \eqref{Estimate_Integral} is immediately obtained.

Second,  the estimate \eqref{Estimate_Cos_N_3}  is straightforward: we may just use the fact that $|\cos (t|\xi|)|\leq 1$ and apply \eqref{Estimate_Integral}.

Third, to show \eqref{Estimate_Sin_N_3} we first see that
\begin{equation*}
|\sin (t|\xi|)|\leq t|\xi|
\end{equation*}
for all $|\xi|\geq 0$.
%\begin{equation*}
%\frac{|\sin (t|\xi|)|}{|\xi|}\leq
%\left\{
%\begin{array}{ll}
%t,& \text{when}\qquad |\xi|<1,\\
%1/|\xi|,& \text{when}\qquad |\xi|\geq 1.
%\end{array}
%\right.
%\end{equation*}
Using this, we have
\begin{eqnarray*}
\int_{|\xi|\leq  1}|\xi|^je^{-c|\xi|^2t}\left\vert \frac{\sin (t|\xi|)}{\xi}\right\vert^2d\xi&\leq& \int_{|\xi|\leq  1}|\xi|^je^{-c|\xi|^2t}t^2d\xi\\
&\leq& (1+t)^2\int_{|\xi|< 1}|\xi|^je^{-c|\xi|^2t}d\xi\\
&\leq& C(1+t)^2\cdot (1+t)^{-N/2-j/2}\\
&=& C(1+t)^{2-N/2-j/2},
\end{eqnarray*}
where we have used \eqref{Estimate_Integral}. This inequality holds true for all $N\geq 1$, in particular for $N=1,2$. But now,  for $N\geq 3$, we can improve the above estimate and get \eqref{Estimate_Sin_N_3}. Indeed, by taking the change of variable $r=|\xi|$ and  $d\xi=|\xi|^{N-1} dr$, we write
\begin{equation*}
\int_{\R^N}|\xi|^je^{-c|\xi|^2t}\left\vert \frac{\sin (t|\xi|)}{\xi}\right\vert^2d\xi=\int_{0}^\infty r^{j+N-1}e^{-cr^2t}\left\vert \frac{\sin (tr)}{r}\right\vert^2dr.
\end{equation*}
Now, we put the new change of variable $\omega=\sqrt{t}r$ and then we get
\begin{eqnarray*}
\int_{0}^\infty r^{j+N-1}e^{-cr^2t}\left\vert \frac{\sin (tr)}{r}\right\vert^2dr&=&\int_{0}^\infty {(\omega/\sqrt{t})}^{j+N-1}e^{-c\omega^2} \left|\sin(\sqrt{t}\omega)\right|^2(t/\omega^2)(d\omega/\sqrt{t} )\\
&=& (\sqrt{t})^{-(j+N-2)}\int_{0}^\infty \omega^{j+N-3}e^{-c\omega^2} \left|\sin(\sqrt{t}\omega)\right|^2d\omega\\
&\leq &t^{-\frac{j+N-2}{2}}\int_{0}^\infty \omega^{j+N-3}e^{-c\omega^2} d\omega.
\end{eqnarray*}
Now, for  $j+N-3\geq 0$ (which holds for all $j\geq 0$ and $N\geq 3$ or for all $j+N\geq 3$ and $N\geq 1$), we have
\begin{equation*}
\int_{0}^\infty \omega^{j+N-3}e^{-cz^2} dz=\frac{\Gamma \left(\frac{j+N-2}{2}\right)}{2c^{\frac{j+N-2}{2}}}.
\end{equation*}
Consequently, we have from above that
\begin{equation*}
\int_{\R^N}|\xi|^je^{-c|\xi|^2t}\left\vert \frac{\sin (t|\xi|)}{\xi}\right\vert^2d\xi\leq Ct^{-(N-2)/2-j/2},\qquad \text{if}\qquad N+j\geq 3
\end{equation*}
with $C=\frac{1}{2}c^{-\frac{j+N-2}{2}}\Gamma \left(\frac{j+N-2}{2}\right)$,
which is exactly \eqref{Estimate_Sin_N_3}.
\end{proof}

We can now proceed to give and prove our first main result, which reads as follows.

\begin{theorem}\label{Theorem_Main}
Let $u$ be the solution of \eqref{Bose_Problem_Linear_Fourier}-\eqref{Initial_data_Fourier}.  Assume that $0<\tau <\beta$. Let  $V=(\tau u_{tt}+u_t , \partial_x(\tau u_t+ u),\partial_x u_t)$ and assume that $V_0\in L^1(\R^N)\cap H^s(\R^N)$, $s\geq 1$. Then, for all $0\leq j\leq s$, we have
\begin{equation}\label{Estimate_Main_Theorem}
\Vert \partial_x^j V(t)\Vert_{L^2(\R^N)}\leq C(1+t)^{-N/4-j/2}\Vert V_0\Vert_{L^1(\R^N)}+e^{-ct}\Vert \partial_x^j V_0\Vert_{L^2(\R^N)}.
\end{equation}
\end{theorem}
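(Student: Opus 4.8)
The plan is to transfer the pointwise Fourier bound of Proposition \ref{Main_estimate} into an $L^2$ estimate via Plancherel's theorem together with a splitting of frequency space into a low- and a high-frequency region. First I would write, using that $\widehat{\partial_x^j V}(\xi,t)$ equals $\hat V(\xi,t)$ times a symbol of size $|\xi|^j$,
\begin{equation*}
\Vert \partial_x^j V(t)\Vert_{L^2(\R^N)}^2 = \int_{\R^N}|\xi|^{2j}|\hat V(\xi,t)|^2\,d\xi \leq C\int_{\R^N}|\xi|^{2j}e^{-c\rho(\xi)t}|\hat V(\xi,0)|^2\,d\xi,
\end{equation*}
where the inequality is exactly \eqref{Main_Estimate_V_Fourier}. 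The observation that drives the whole argument is that $\rho(\xi)=|\xi|^2/(1+|\xi|^2)$ behaves like $|\xi|^2$ for small $|\xi|$ but is bounded below by a positive constant for large $|\xi|$, so the regions $|\xi|\leq 1$ and $|\xi|\geq 1$ must be treated separately and will produce, respectively, the polynomial and the exponential terms of \eqref{Estimate_Main_Theorem}.

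On the low-frequency region $|\xi|\leq 1$ I would use $1+|\xi|^2\leq 2$ to get $\rho(\xi)\geq |\xi|^2/2$, hence $e^{-c\rho(\xi)t}\leq e^{-c|\xi|^2 t/2}$, together with the elementary bound $|\hat V(\xi,0)|\leq \Vert V_0\Vert_{L^1(\R^N)}$ valid for the Fourier transform of an $L^1$ function. This reduces the integral over $|\xi|\leq 1$ to $\Vert V_0\Vert_{L^1(\R^N)}^2\int_{|\xi|\leq 1}|\xi|^{2j}e^{-c|\xi|^2 t/2}\,d\xi$, and applying \eqref{Estimate_Integral} of Lemma \ref{Lemma_Inequalities} with $2j$ in place of $j$ bounds this contribution by $C(1+t)^{-N/2-j}\Vert V_0\Vert_{L^1(\R^N)}^2$.

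On the high-frequency region $|\xi|\geq 1$ I would instead use $\rho(\xi)\geq 1/2$, so that $e^{-c\rho(\xi)t}\leq e^{-ct/2}$, and simply keep $|\hat V(\xi,0)|^2$ as it stands, giving
\begin{equation*}
\int_{|\xi|\geq 1}|\xi|^{2j}e^{-c\rho(\xi)t}|\hat V(\xi,0)|^2\,d\xi \leq e^{-ct/2}\int_{\R^N}|\xi|^{2j}|\hat V(\xi,0)|^2\,d\xi = e^{-ct/2}\Vert \partial_x^j V_0\Vert_{L^2(\R^N)}^2,
\end{equation*}
by Plancherel again. Adding the two contributions, taking square roots (using $\sqrt{a+b}\leq\sqrt a+\sqrt b$), and relabelling the constant $c$ then yields \eqref{Estimate_Main_Theorem}.

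There is no genuine obstacle once Proposition \ref{Main_estimate} and Lemma \ref{Lemma_Inequalities} are available, since this is the standard route from a pointwise Fourier bound to an $L^1$–$L^2$ decay estimate. The only points requiring care are matching the exponent in \eqref{Estimate_Integral}, which is stated for the weight $|\xi|^{j}$, against the weight $|\xi|^{2j}$ produced by the $j$-th spatial derivative, and keeping clear that only the low frequencies see the polynomial rate while the high frequencies are responsible for the exponentially decaying term.
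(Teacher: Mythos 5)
Your proposal is correct and follows essentially the same route as the paper's own proof: Plancherel, the pointwise bound of Proposition \ref{Main_estimate}, the splitting at $|\xi|=1$ using the two-sided behaviour of $\rho(\xi)$, the bound $|\hat V(\xi,0)|\leq\Vert V_0\Vert_{L^1}$ on low frequencies together with \eqref{Estimate_Integral}, and the plain exponential bound on high frequencies. No substantive differences to report.
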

\begin{proof}
First, observe that the Fourier image of $V$ satisfies the decay estimate of Proposition \ref{Main_estimate}. Now, to show \eqref{Estimate_Main_Theorem}, we have from   \eqref{rho_function}   that
\begin{equation}\label{rho_2_behavior_new}
\rho(\xi)\geq\left\{
\begin{array}{ll}
    c|\xi|^2, & \text{if }| \xi|\leq 1, \vspace{0.2cm} \\
   c,  &   \text{if } |\xi|\geq 1.
\end{array}
\right.
\end{equation}
Applying the Plancherel theorem and using the  estimate in  (\ref%
{Main_Estimate_V_Fourier}), we obtain%
\begin{eqnarray}
\left\Vert \partial _{x}^{j}V\left( t\right) \right\Vert _{L^{2}}^{2}
&=&\int_{\mathbb{R}^N }\vert \xi \vert ^{2j}\vert \hat{V}%
\left( \xi ,t\right) \vert ^{2}d\xi  \notag \\
&\leq &C\int_{\mathbb{R}^N }\left\vert \xi \right\vert ^{2j}e^{-c\rho \left(
\xi \right) t}\vert \hat{V}\left( \xi ,0\right) \vert ^{2}d\xi
\notag \\
&=&C\int_{\left\vert \xi \right\vert \leq 1}\left\vert \xi \right\vert
^{2j}e^{-c\rho\left( \xi \right) t}\vert \hat{V}\left( \xi ,0\right)
\vert ^{2}d\xi +C\int_{\left\vert \xi \right\vert \geq 1}\left\vert
\xi \right\vert ^{2j}e^{-c\rho \left( \xi \right) t}\vert \hat{V}\left(
\xi ,0\right) \vert ^{2}d\xi  \notag \\
&=:&I_{1}+I_{2}.  \label{deron_U_equality}
\end{eqnarray}%
Exploiting (\ref{rho_2_behavior_new}),
 we infer that
\begin{equation}
I_{1}\leq C\Vert \hat{V}_{0}\Vert _{L^\infty }^{2}\int_{\left\vert
\xi \right\vert \leq 1}\left\vert \xi \right\vert ^{2j}e^{-c|\xi|
^{2}t}d\xi \leq C\left( 1+t\right) ^{-N/2-j
}\left\Vert V_{0}\right\Vert _{L^{1}}^{2},  \label{I_1_estimate}
\end{equation}

%where we have used the inequality%
%\begin{equation}\label{Integral_Inequality}
%\int_{0}^{1}\left\vert \xi \right\vert ^{\sigma }e^{-c|\xi |^{2}t}d|\xi| \leq
%C\left( 1+t\right) ^{-\left( \sigma +1\right) /2}.
%\end{equation}%
\noindent where we have used the inequality \eqref{Estimate_Integral}.
In the high-frequency region ($|\xi|\geq 1$), we have
\begin{equation*}
I_2 \leq e^{-ct}\int_{\left\vert \xi \right\vert \geq 1}\left\vert \xi
\right\vert ^{2j}\vert \hat{V}\left( \xi ,0\right)
\vert ^{2}d\xi
\leq e^{-ct} \Vert \partial_x^jV_0\Vert^2_{L^2}.
\end{equation*}
Collecting the above two estimates, we obtain \eqref{Estimate_Main_Theorem}.
This finishes the proof of Theorem \ref{Theorem_Main}.
\end{proof}

\begin{remark}
The estimate \eqref{Estimate_Main_Theorem} does not give the decay rate of the solution $u$. In fact, it gives the decay rates of the  norms $\Vert \tau u_{tt}+u_t\Vert_{L^2}$, $\Vert \partial_x(\tau u_t+u)\Vert_{L^2}$ and $\Vert \partial_xu_t\Vert_{L^2}$ (and also of the corresponding derivatives). These three norms are expected to decay with different rates. In order to know the decay rate of the solution and all its spatial derivatives, we need to use the explicit form of the solution and the eigenvalues expansion. This will be done in the following section.
%decay with the rate $(1+t)^{-N/4}$, which is slower than the expected decay rate which is $(1+t)^{-N/4-1/2}$ for $\Vert \partial_xu\Vert_{L^2}$ and $(1+t)^{-N/4-3/2}$ for $\Vert \partial_xu_t\Vert_{L^2}$. In addition, it does not give any decay rate for the $L^2$--norm of the solution $\Vert u\Vert_{L^2}$ which is expected to be $(1+t)^{-N/4}$. So, in order to capture the above decay estimates, we should use the eigenvalues expansion.
\end{remark}

%Which can be rewritten in the matrix form as
%\begin{equation}\label{Matrix_Form}
%\hat{U}_t(\xi,t)=\Phi(\xi) \hat{U(\xi,t)},
%\end{equation}
%with the initial data
%\begin{equation}\label{Initial_data}
%\hat{U}_0(\xi)=\hat{U}(\xi,0),
%\end{equation}
%where $\hat{U(\xi,t)}=(\hat{u},\hat{v},\hat{w})^T$ and
%\begin{equation*}
%\Phi=\left(
%\begin{array}{ccc}
%0 & 1 & 0 \\
%0 & 0 & 1 \\
%-\dfrac{|\xi|^2}{\tau } & -\dfrac{\beta |\xi|^2}{\tau } & -\dfrac{1}{\tau }
%\end{array}%
%\right).
%\end{equation*}

\section{Eigenvalues expansion}\label{sec:eigenvalues}
In this section, we use the eigenvalues expansion and the explicit form of the Fourier image of the solution in order to find the decay rates of the solution and its spacial derivatives.

The characteritic equation associated to \eqref{Matrix_Form} is
\begin{equation}\label{Characteristic_Polynomial_xi}
\det(L+|\xi|^2 A-\lambda I) = \tau \lambda^3+\lambda^2+\beta |\xi|^2 \lambda+|\xi|^2 = 0.
\end{equation}

The solutions $\lambda_i,\,i=1,2,3$ of the previous equation are the eigenvalues of $\Phi(\xi)$. We will use either $\lambda_i(\xi)$ or $\lambda_i(|\xi|)$ to denote them (depending on which of both notations is more convenient and when no confusion is possible) during the text below.

\vspace{0.2cm}

The following proposition on the description of these eigenvalues is an adaptation of some of the results of Proposition 4 of \cite{P-SM-2015} (some part also in \cite{Trigg_et_al}), that we summarize and adapt here for a better comprehension and to be used later in the present work.

\begin{proposition} [Description of the eigenvalues, \cite{P-SM-2015} and \cite{Trigg_et_al}]\label{descriptioneig}
For each $\xi\in\mathbb{R}^N$ there exist three corresponding eigenvalues of $\Phi(\xi)$, that we name $\lambda_j(|\xi|)$, $j=1,2,3$, the three solutions of the corresponding characteristic equation \eqref{Characteristic_Polynomial_xi}.
%  Moreover, if $\lambda_j(\xi)=a(\xi)+ib(\xi)$ is nonreal, then $a(\xi)$ and $b(\xi)$ satisfy:
%        \begin{equation}\label{eq:partreal}
%        8\alpha a_n^3+8 a_n^2+2 a_n\left( \dfrac{1}{\alpha}+\beta\mu_n\right)+\mu_n\left(\dfrac{\beta}{\alpha}-1\right)=0,
%        \end{equation}
%        \begin{equation}\label{eq:partim}
%        b_n^2=\dfrac{1}{\alpha} \left( 3\alpha a_n^2 +2 a_n +  \beta\mu_n\right).
%        \end{equation}

We define the following numbers $m_1,m_2$, which are the zeroes of the Cardano discriminant associated to the characteristic equation \eqref{Characteristic_Polynomial_xi}:
          \begin{equation}\label{m1m2}
          m_1 =\tau \dfrac{-C_1-\sqrt{C_2}}{8\beta^3},\qquad  m_2 =\tau \dfrac{-C_1+\sqrt{C_2}}{8\beta^3}
          \end{equation}
            with
          \begin{equation}\label{eq:AB}
              C_1 = 27-18\left(\dfrac{\beta}{\tau }\right) -\left(\dfrac{\beta}{\tau }\right)^2  ,\ \qquad
              C_2 =  C_1^2-64\left(\dfrac{\beta}{\tau }\right)^3.
          \end{equation}
  Under the dissipativeness condition $0<\tau <\beta$, the eigenvalues of $\Phi(\xi)$ satisfy the following:

  \begin{enumerate}[1.]
    \item

     \begin{enumerate}[a)]
       \item $\lambda_{1}(|\xi|)=-\frac{1}{\tau } \textrm{ and }\lambda_{2,3}(|\xi|)=0 \textrm{ when }|\xi|=0$.
       \vspace{0.2cm}
       \item If $\frac{1}{9} <\frac{\tau }{\beta}<1$, $\lambda_1(|\xi|)\in\mathbb{R}\textrm{ and }\lambda_{2}(|\xi|)=\overline{\lambda_{3}(|\xi|)}\in\mathbb{C}\setminus\mathbb{R} \textrm{ for all values of }|\xi|>0$. \vspace{0.2cm}
       \item If $0 <\frac{\tau }{\beta}<\frac{1}{9}$, the type of eigenvalue depends on the value of $|\xi|$ (see Figure \ref{fig:spectrum}). More concretely:
        \vspace{0.2cm}
       \begin{enumerate}
         \item $\lambda_3(|\xi|)\in\mathbb{R}$ and $\lambda_{1}(|\xi|)=\overline{\lambda_{2}(|\xi|)}\in\mathbb{C}\setminus\mathbb{R}$ for $0< |\xi|< \sqrt{m_1}$. \vspace{0.2cm}
         \item $\lambda_{1,2,3}(|\xi|)\in\mathbb{R}\textrm{ for }\sqrt{m_1}\leq |\xi|\leq \sqrt{m_2}.$
         Moreover, in the case that $|\xi|=\sqrt{m_1}$ or $|\xi|=\sqrt{m_2}$, two of these real roots are equal.
        \item $\lambda_1(|\xi|)\in\mathbb{R}$ and $\lambda_{2}(\xi)=\overline{\lambda_{3}(\xi)}\in\mathbb{C}\setminus\mathbb{R}$ for $|\xi|>\sqrt{m_2}$. \vspace{0.2cm}
        \end{enumerate}
        \vspace{0.2cm}
       \item If $\frac{\tau }{\beta}=\frac{1}{9}$, we have $m_1=m_2$ and
         \begin{enumerate}
         \item $\lambda_3(|\xi|)\in\mathbb{R}$ and $\lambda_{1}(|\xi|)=\overline{\lambda_{2}(|\xi|)}\in\mathbb{C}\setminus\mathbb{R}$ for $0< |\xi|< \sqrt{m_1}$. \vspace{0.2cm}
         \item $\lambda_{1,2,3}(|\xi|)= -\frac{3}{\beta}\in\mathbb{R}\quad \textrm{    for } \quad|\xi|=\sqrt{m_1}=\sqrt{m_2}$ (triple root case).
         \item $\lambda_1(|\xi|)\in\mathbb{R}$ and $\lambda_{2}(|\xi|)=\overline{\lambda_{3}(|\xi|)}\in\mathbb{C}\setminus\mathbb{R}$ for $|\xi|>\sqrt{m_2}$. \vspace{0.2cm}
        \end{enumerate}
        \vspace{0.2cm}
      \end{enumerate}

    \item

    \begin{enumerate}[a)]
        \item If $\lambda(|\xi|)$, $|\xi|\neq 0$, is a real eigenvalue of $\Phi(\xi)$, then
        \begin{equation}\label{eq:fita_lambda}
                -\dfrac{1}{\tau } < \lambda(|\xi|) <-\dfrac{1}{\beta}.
              \end{equation}
        If $\lambda(|\xi|)$ is nonreal, then
        \begin{equation}\label{eq:fita_part re}
                \limCo<\Re(\lambda(|\xi|))<0.
              \end{equation}

      \item If $|\xi_1|<|\xi_2|$ and such that $\lambda_{2}(|\xi_1|),\lambda_{2}(|\xi_2|)\in\mathbb{C}\setminus\mathbb{R}$, then $\Re(\lambda_2(|\xi_1|))>\Re(\lambda_2(|\xi_2|))$.

%      \item The limits of the sequences of eigenvalues are the following:
%            \begin{equation}\label{eq:lim_l1}
%              \lim_{|\xi|\to\infty} \lambda_1(\xi)=\limRe,
%            \end{equation}
%            which is not an eigenvalue, but the only element in the essential spectrum of $\Phi(\xi)$,
%            \begin{equation}\label{eq:lim_Rel2}
%            \lim_{|\xi|\to\infty} \Re(\lambda_2(\xi))=\limCo
%            \end{equation}
%            and $\lim_{|\xi|\to\infty} \Im(\lambda_2(\xi))=\infty$ with
%            \begin{equation}\label{eq:lim_Iml2}
%              \Im (\lambda_2(\xi)) = \sqrt{\dfrac{\beta}{\tau }}|\xi| + o\left(|\xi|\right).
%            \end{equation}
%
%
%            \item Also, one has that $\lim_{|\xi|\to\infty} \Re(\lambda_2(\xi))$ is lower than $\lim_{|\xi|\to\infty} \lambda_1(\xi)$ (respectively, equal or higher), if $\dfrac{\tau }{\beta}$ is lower than $\dfrac{1}{3}$ (respectively, equal or higher).

  \end{enumerate}

\end{enumerate}
\end{proposition}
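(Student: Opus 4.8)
The plan is to treat the characteristic equation \eqref{Characteristic_Polynomial_xi} as a cubic in $\lambda$ whose coefficients depend on the single parameter $s:=|\xi|^2$, writing $P(\lambda)=\tau\lambda^3+\lambda^2+\beta s\,\lambda+s$, and to read off the full trichotomy from the Cardano discriminant together with Vieta's relations. For $s=0$ one factors $P(\lambda)=\lambda^2(\tau\lambda+1)$, giving $\lambda=0$ (double) and $\lambda=-1/\tau$, which is item 1a). For $s>0$ I would compute the cubic discriminant and factor it as $\Delta(s)=s\,Q(s)$ with $Q(s)=-4\tau\beta^3 s^2+(18\tau\beta+\beta^2-27\tau^2)s-4$, a downward-opening parabola with $Q(0)=-4<0$. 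A short algebraic check gives the identity $18\tau\beta+\beta^2-27\tau^2=-\tau^2 C_1$, so the two roots of $Q$ are exactly the numbers $m_1,m_2$ of \eqref{m1m2}, and whether they are real is controlled by $C_2=C_1^2-64(\beta/\tau)^3$. Evaluating at $\beta/\tau=9$ shows $C_2=0$, while $C_2<0$ for $1<\beta/\tau<9$ and $C_2>0$ for $\beta/\tau>9$; this is precisely the split $\tfrac19<\tfrac{\tau}{\beta}<1$, $0<\tfrac{\tau}{\beta}<\tfrac19$, $\tfrac{\tau}{\beta}=\tfrac19$.

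The three cases 1b)--1d) then follow from the sign of $\Delta$. When $C_2<0$ (case 1b) the parabola $Q$ has no real zero, hence $Q<0$ and $\Delta<0$ for all $s>0$: one real and two conjugate complex roots throughout. When $C_2>0$ (case 1c) the roots satisfy $0<m_1<m_2$ (their product $1/(\tau\beta^3)$ and their sum $-\tau C_1/(4\beta^3)$ are both positive, since $C_1<0$ in this range), and $\Delta>0$ on $(m_1,m_2)$ (three distinct real roots), $\Delta<0$ outside, $\Delta=0$ at the endpoints (a double real root); passing to $s=|\xi|^2$ produces the thresholds $\sqrt{m_1},\sqrt{m_2}$. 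When $C_2=0$ (case 1d) one has $Q(s)=-4\tau\beta^3(s-m_1)^2\le0$, so $\Delta\le0$ with equality only at $s=m_1$, where the root is triple; forcing $P(\lambda)=\tau(\lambda-\lambda_0)^3$ yields $\lambda_0=-1/(3\tau)=-3/\beta$. The assignment of the labels $\lambda_1,\lambda_2,\lambda_3$ to the branches is fixed by continuity in $|\xi|$ exactly as in \cite{P-SM-2015}.

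For the bounds in part 2a) I would bypass Cardano's formulas and instead solve the characteristic equation for $s$: a real $\lambda\neq0$ is an eigenvalue for some $s>0$ if and only if $s=-\lambda^2(\tau\lambda+1)/(\beta\lambda+1)>0$. Since $-\lambda^2<0$, positivity forces $\tau\lambda+1$ and $\beta\lambda+1$ to have opposite signs, and because $\tau<\beta$ (so $-1/\tau<-1/\beta$) this occurs exactly on $(-1/\tau,-1/\beta)$, which is \eqref{eq:fita_lambda}. For a nonreal eigenvalue $\lambda_2=\overline{\lambda_3}=\mu+i\nu$ with real companion $\rho\in(-1/\tau,-1/\beta)$, Vieta's sum relation $\lambda_1+\lambda_2+\lambda_3=-1/\tau$ gives $\mu=\tfrac12(-1/\tau-\rho)$; substituting $-1/\tau<\rho<-1/\beta$ yields $\limCo<\mu<0$, which is \eqref{eq:fita_part re}.

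Finally, for the monotonicity in 2b) I would differentiate $P(\rho,s)=0$ implicitly along the real branch $\rho=\rho(s)$, obtaining $\rho'(s)=-(\beta\rho+1)/P'(\rho)$. On the complex regime $P$ factors as $\tau(\lambda-\rho)(\lambda^2+b\lambda+c)$ with a strictly positive quadratic factor, so $P'(\rho)=\tau(\rho-\lambda_2)(\rho-\lambda_3)>0$, while $\beta\rho+1<0$ because $\rho<-1/\beta$; hence $\rho'(s)>0$, i.e. the real root increases with $|\xi|$. Since $\mu=\tfrac12(-1/\tau-\rho)$ is a decreasing function of $\rho$, $\Re(\lambda_2)$ strictly decreases in $|\xi|$, giving $\Re(\lambda_2(|\xi_1|))>\Re(\lambda_2(|\xi_2|))$ whenever $|\xi_1|<|\xi_2|$. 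I expect the main obstacle to be the bookkeeping in the first two paragraphs: verifying that the discriminant parabola has roots exactly $m_1,m_2$ and then tracking the labels continuously through the transition radii $\sqrt{m_1},\sqrt{m_2}$, where the real and complex characters switch. The analytic content (the bounds and the monotonicity) is comparatively direct once the reparametrization $s=-\lambda^2(\tau\lambda+1)/(\beta\lambda+1)$ is in hand.
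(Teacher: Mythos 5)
The paper does not actually prove this proposition --- it is stated as a summary/adaptation of Proposition 4 of \cite{P-SM-2015} (and parts of \cite{Trigg_et_al}) --- so your self-contained argument is necessarily a different route, and most of it is sound. The discriminant bookkeeping checks out: with $s=|\xi|^2$ one has $\Delta(s)=s\,Q(s)$ with $Q(s)=-4\tau\beta^3s^2-\tau^2C_1s-4$, the zeros of $Q$ are exactly $m_1,m_2$ of \eqref{m1m2}, and the sign analysis of $C_2$ that you assert without proof is most cleanly justified by the factorization $C_2=\left(\tfrac{\beta}{\tau}-1\right)\left(\tfrac{\beta}{\tau}-9\right)^3$, which you should record. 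The reparametrization $s=-\lambda^2(\tau\lambda+1)/(\beta\lambda+1)$ gives \eqref{eq:fita_lambda} cleanly, and the Vieta relation $\Re(\lambda_2)=\tfrac12(-\tfrac1\tau-\rho)$ correctly transfers it to \eqref{eq:fita_part re}. Two points need tightening. In 1d) you only show that \emph{if} the root is triple then $\lambda_0=-1/(3\tau)=-3/\beta$; you still need the one-line consistency check that the perfect-cube conditions $-3\tau\lambda_0=1$, $3\tau\lambda_0^2=\beta s$, $-\tau\lambda_0^3=s$ are simultaneously solvable precisely when $\tau/\beta=1/9$ and $s=1/(27\tau^2)$, and that this value of $s$ equals $m_1$.

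The genuine gap is in 2b). Implicit differentiation gives $\rho'(s)>0$ only on each connected component of the set where the cubic has one real and two nonreal roots, but for $0<\tau/\beta<1/9$ that set is $(0,m_1)\cup(m_2,\infty)$ in $s$, and the statement also covers $|\xi_1|^2<m_1<m_2<|\xi_2|^2$: you must compare the real root across the gap where all three roots are real. This is fixable with the tool you already introduced for 2a): the map $g(\lambda)=-\lambda^2(\tau\lambda+1)/(\beta\lambda+1)$ on $(-\tfrac1\tau,-\tfrac1\beta)$ tends to $0$ and $+\infty$ at the endpoints and, in this parameter range, has a local maximum of value $m_2$ at some $p$ and a local minimum of value $m_1$ at some $q>p$; for $s<m_1$ the unique real root is the preimage on the first increasing branch, hence less than $p$, while for $s>m_2$ it lies on the last increasing branch, hence greater than $q>p$. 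Thus the real root is globally increasing across the gap and $\Re(\lambda_2)$ globally decreasing. With these two additions your proof is complete.
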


\begin{figure}[htpb]
\begin{subfigure}[b]{0.5\textwidth}
  \includegraphics[width=\textwidth]{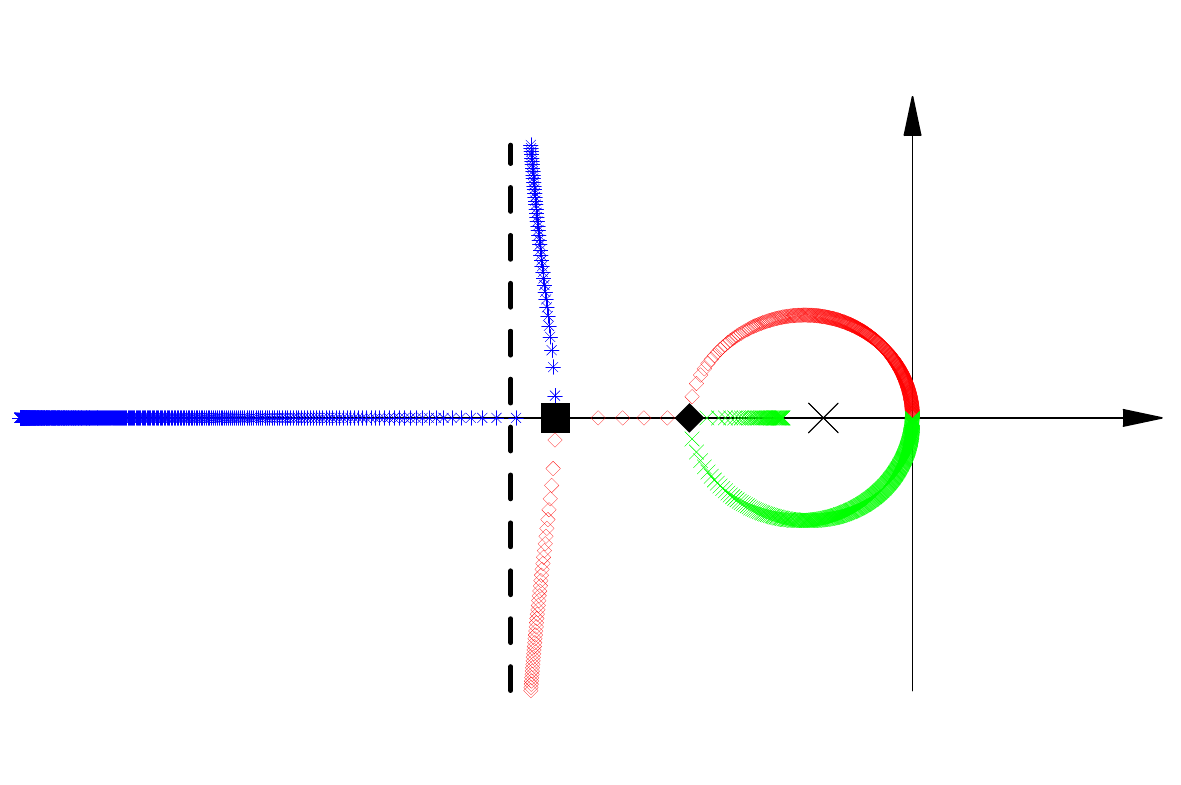}
\end{subfigure}\\ \vspace{-0.7cm}
\begin{subfigure}[b]{1\textwidth}
  \includegraphics[width=\textwidth]{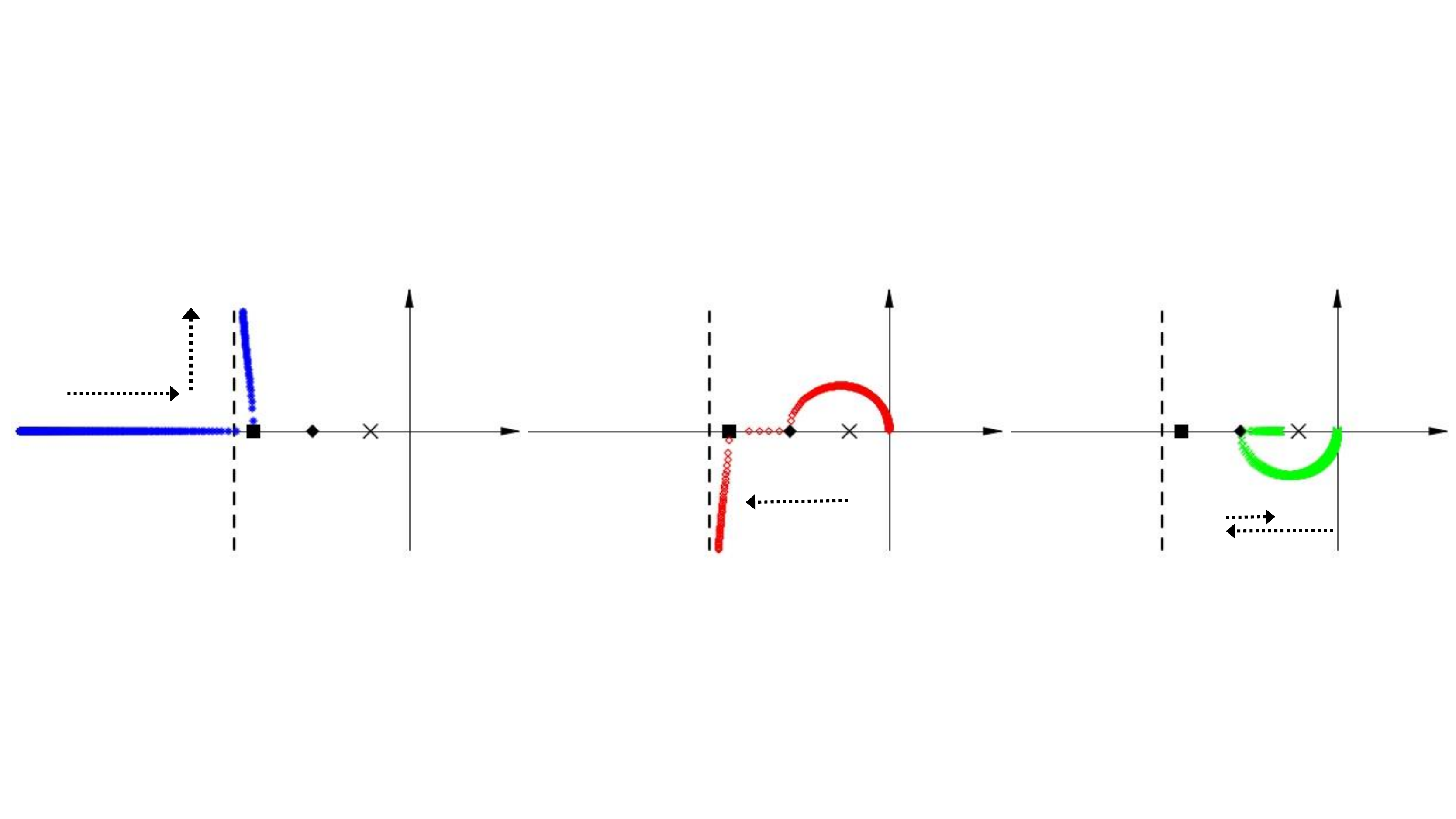}\vspace{-2cm}
\end{subfigure}
% Figures from SLV_dibuixVaps_Article2p.mn
\caption{Plot of $\lambda(|\xi|)$  when $0<\frac{\tau}{\beta}<\frac{1}{9}$ (case $(1c)$ of Proposition \ref{descriptioneig}). Top: the three sequences of solutions of the characteristic equation. Bottom (from left to right): separated plot of each of these sequences $\lambda_1(|\xi|)$, $\lambda_2(|\xi|)$ and $\lambda_3(|\xi|)$. The large dot and diamond correspond to the double eigenvalues
$\lambda_1(\sqrt{m_2})=\lambda_2(\sqrt{m_2})$ and $\lambda_2(\sqrt{m_1})=\lambda_3(\sqrt{m_1})$ respectively. The dashed arrows represent how each family of solutions is increasing as a function of $|\xi|$. The dashed vertical line is $\Re(\lambda)=-\frac{1}{2}\left(\frac{1}{\tau}-\frac{1}{\beta}\right)$, which is the limit of the nonreal sequences as $|\xi|\to\infty$, and the cross represents the point $(-\frac{1}{\beta},0)$, which is the limit of the real one as $|\xi|\to\infty$ (see \cite{P-SM-2015} and \cite{Trigg_et_al} for more details).}\label{fig:spectrum}
\end{figure}

\begin{remark}\label{rem:labeleigenv}
  Observe that with the previous labelling of the eigenvalues in part 1 of Proposition \ref{descriptioneig}, each $\lambda_i(|\xi|)$, $i=1,2,3$, is a continuous function in $\xi$. However, during the rest of the paper and for simplicity in the notation we may call $\lambda_1$ the real eigenvalue and $\lambda_{2,3}$ the complex conjugate ones for all $\xi$ in some proofs (it will be mentioned when this is done).
\end{remark}

In the description of the eigenvalues, we also need to prove the following lemma.
\begin{lemma}\label{Lemma_Reiss}
The characteristic equation associated to \eqref{Bose_Problem_Linear_Fourier} has no pure imaginary solution if $0<\tau <\beta$ (dissipative case).
\end{lemma}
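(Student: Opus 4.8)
The plan is to test directly whether a purely imaginary number $\lambda=i\omega$, with $\omega\in\R$, can satisfy the characteristic equation \eqref{Characteristic_Polynomial_xi}. First I would substitute $\lambda=i\omega$ into $\tau\lambda^3+\lambda^2+\beta|\xi|^2\lambda+|\xi|^2=0$, using $(i\omega)^2=-\omega^2$ and $(i\omega)^3=-i\omega^3$, and then separate the resulting expression into its real and imaginary parts. This produces the coupled system
\begin{equation*}
|\xi|^2-\omega^2=0,\qquad \omega\bigl(\beta|\xi|^2-\tau\omega^2\bigr)=0,
\end{equation*}
both of which must hold simultaneously for $i\omega$ to be a root.

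The heart of the argument is then a short case analysis on the second equation. If $\omega\neq0$, it forces $\tau\omega^2=\beta|\xi|^2$, while the first equation gives $\omega^2=|\xi|^2$. Substituting the latter into the former yields $\tau|\xi|^2=\beta|\xi|^2$; since a nonzero $\omega$ forces $|\xi|^2=\omega^2>0$ through the real-part equation, we may cancel $|\xi|^2$ and obtain $\tau=\beta$, which contradicts the standing hypothesis $0<\tau<\beta$. Hence no root with $\omega\neq0$ can exist. It remains to dispose of the degenerate possibility $\omega=0$, i.e. $\lambda=0$: the real-part equation then reads $|\xi|^2=0$, so this can occur only at $\xi=0$, where \eqref{Characteristic_Polynomial_xi} degenerates to $\lambda^2(\tau\lambda+1)=0$ and $\lambda=0$ is indeed a root, whereas for every $\xi\neq0$ the value $\lambda=0$ is not a root.

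The computation itself is elementary, so I do not expect a genuine obstacle; the only point requiring care is bookkeeping of the excluded frequency $\xi=0$ and the trivial root $\lambda=0$, so that the statement is correctly understood as the absence of genuine imaginary-axis crossings for $\xi\neq0$. This is exactly the information one needs in order to conclude, in combination with the $\xi=0$ spectrum $\{0,0,-1/\tau\}$ recorded in Proposition~\ref{descriptioneig} and the continuity of the eigenvalues in $\xi$, that the nonzero eigenvalues never migrate into the closed right half-plane.
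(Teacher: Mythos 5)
Your proof is correct and follows essentially the same route as the paper: substitute $\lambda=i\omega$ into \eqref{Characteristic_Polynomial_xi}, split into real and imaginary parts to get $|\xi|^2-\omega^2=0$ and $\omega(\beta|\xi|^2-\tau\omega^2)=0$, and conclude that a nontrivial imaginary root would force $\tau=\beta$, while $\omega=0$ is only possible at $\xi=0$. Your explicit bookkeeping of the degenerate root $\lambda=0$ at $\xi=0$ matches the paper's treatment.
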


\begin{proof}
We consider \eqref{Characteristic_Polynomial_xi}, the characteristic equation associated to \eqref{Bose_Problem_Linear_Fourier}.
Assume that there exists an eigenvalue $\lambda_0(|\xi|)=i\alpha$ as a solution of \eqref{Characteristic_Polynomial_xi} with $\alpha\in \R$. Plugging $\lambda_0$ into \eqref{Characteristic_Polynomial_xi} and  splitting the real and imaginary parts, we obtain
\begin{equation*}
-\tau \alpha^3+\beta |\xi|^2\alpha=0\qquad \text{and}\qquad |\xi|^2-\alpha^2=0.
\end{equation*}

From the first equation, we have two possibilites: $\alpha=0$ or $\alpha=\pm \sqrt{\frac{\beta}{\tau }} |\xi|$. In the first case and using now the second equation, the only possibility is that $\alpha=|\xi|=0$, which actually means that $\lambda_0=0$ is a (double) real eigenvalue when $|\xi|=0$. The second one would be fulfilled only if $|\xi|=0$ or if $\beta=\tau $. If $|\xi|=0$, we would again obtain $\lambda_0=0$ as a (double) real eigenvalue. The case $\beta=\tau $ will not be considered since we assumed that $0<\tau <\beta$. Hence, the characteristic equation associated to \eqref{Bose_Problem_Linear_Fourier} has no pure imaginary solutions in the dissipative case.

%(Comments for you: we don't have to include them in the paper). This result is in agreement with the following. In our previous paper, we obtained the characteristic equation for the real and imaginary parts of the non-real eigenvalues. These are equations (9) and (10) in our paper, that, using the notation of the present one, read:
%
%$$8\tau a^3+ 8 a^2 + 2 a \left(\frac{1}{\tau }+\beta |\xi|^2\right) +|\xi|^2 \left(\frac{\beta}{\tau }-1 \right)$$
%$$ b^2 = \frac{1}{\tau } \left( 3\tau a^2 + 2 a+ \beta|\xi|^2\right)$$
%where $\lambda(\xi) = a(\xi)+b(\xi)i$, with $b(\xi)\neq 0$. To see which are the purely imaginary solutions, we can check if $a=0$ is a solution of the first equation. Imposing $a=0$ in the first one, we end up with two possibilities: $|\xi|=0$ and $\beta=\tau $. Imposing each of them in the second equation, we obtain that $\lambda=0$ is a double real eigenvalue when $|\xi|=0$ in the first case, and that $\lambda=\pm |\xi| i$ for all $\xi$ when $\beta=\tau $ in the second one. This is in agreement with the upper proof.

%If $\alpha=0$ in the first equation, then the second one represent a contradiction since $\xi\in \Upsilon_M.$ Then, we should have $\beta |\xi|^2=\tau \alpha^2$. The second leads to
%\begin{equation*}
%\beta=\tau |\xi|^2.
%\end{equation*}
%This is a contradiction since $\xi\in \Upsilon_M.$

 \end{proof}

In order to give the decay rate of the solution in the next section, we now proceed to give asymptotic approximations of the eigenvalues of $\Phi(\xi)$ when $|\xi|\to 0$ and $|\xi|\to\infty$. For this purpose, it will be more convenient to apply the change of variables $\zeta=i|\xi|$ in the  characteristic equation \eqref{Characteristic_Polynomial_xi}, that now becomes:
\begin{equation}\label{Characteristic_Polynomial}
\det(L-\zeta^2 A-\lambda I) = \tau \lambda^3+\lambda^2-\beta \zeta^2 \lambda-\zeta^2 = 0.
\end{equation}

Recall that $\lambda_j(\zeta),\, j=1,2, 3$, are the roots of \eqref{Characteristic_Polynomial}%\eqref{Characteristic_equation}
, that we write
\begin{equation}\label{Asymptotic_Expansion}
\lambda_j(\zeta)=\lambda_j^{(0)}+\lambda_j^{(1)}\zeta+\lambda_j^{(2)}\zeta^2+...,\qquad \qquad j=1,2, 3.
\end{equation}
or, equivalently,
\begin{equation*}
\lambda_j(|\xi|)=\lambda_j^{(0)}+\lambda_j^{(1)}\, i\, |\xi|\,-\lambda_j^{(2)}|\xi|^2+...,\qquad \qquad j=1,2, 3.
\end{equation*}

For simplicity, we will denote now as $\lambda_1$ the real root and $\lambda_{2,3}$ the complex conjugate ones when $|\xi|\rightarrow 0$, both when $0<\tau /\beta<1/9$ and $1/9<\tau /\beta<1$ (see Remark \ref{rem:labeleigenv} and Proposition \ref{descriptioneig}).

We can now compute the first coefficients in \eqref{Asymptotic_Expansion} using the characteristic equation \eqref{Characteristic_Polynomial}, obtaining that
\begin{equation}\label{Eigenvalues_L}
\left\{
\begin{array}{ll}
\lambda _1^{(0)}=-\dfrac{1}{\tau }
\vspace{0.3cm}\\
\lambda_j^{(0)}=0,\quad \lambda_j^{(1)}=\pm 1, \quad \lambda_j^{(2)}=\dfrac{1}{2}(\beta-\tau )&\qquad \text{for }\quad  j=2,3.
\end{array}
\right.
\end{equation}

Consequently, we have for $|\xi|\rightarrow 0$ that

\begin{equation}\label{Estimate_Eigenvalues_0}
\func{Re}(\lambda_j(|\xi|))=
\left\{
\begin{array}{ll}
-\dfrac{1}{\tau }+O(|\xi|),&\qquad \text{for}\qquad j=1,
\vspace{0.2cm}\\
-\dfrac{1}{2}(\beta-\tau )|\xi|^2+O(|\xi|^3),&\qquad \text{for}\qquad j=2,3.
\end{array}
\right.
\end{equation}

Under the assumption $0<\tau <\beta$, it is clear that $\func{Re}(\lambda_j)<0$ for all $j=1,2,3$ when $|\xi|\rightarrow 0$.

\begin{remark}\label{Remark_Behavior_eigenvalues}
The behavior of the solution of \eqref{Matrix_Form} depends on the behavior of the function $e^{\Re{\lambda_j}(\xi)t}$, $j=1,2,3$. Since in most cases $\lambda_j(|\xi|)$ is a power series of $|\xi|$, so it is its real part. Observe that as $\Re{\lambda_j}(|\xi|)<0$, the frequencies that give the dominant part of all $e^{\Re{\lambda_j}(|\xi|)t}$ are those corresponding to small frequencies $|\xi|$. For this reason, the behavior of the real part near $|\xi|=0$ determines  the decay rate of the solution. For large frequencies, and again as $\Re{\lambda_j}(|\xi|)<0$, it is clear that $e^{\Re{\lambda_j}(|\xi|)t}$ can be always estimated by $e^{-ct}$ if the powers in the Taylor series expansion of $\Re{\lambda_j}(|\xi|)$ near infinity are positive or by $|\xi|^me^{-c|\xi|^{-\hat{c}}t}$ for a certain $m>0$ if one of the powers in the Taylor series expansion is negative. In both cases and using Plancherel's theorem, we see that the integral in the high frequencies is bounded if and only if some derivatives of the solution are bounded, which means that the solution should be in some Sobolev spaces and this gives the regularity of initial data needed for the desired decay rate.
\end{remark}

\vspace{0.3cm}
Next, we proceed to give asymptotic approximations of the eigenvalues when $|\zeta|\rightarrow \infty$. Following \cite{IHK08},
%we use the fact that $\frac{1}{|\zeta|}\rightarrow 0$ and then we consider the characteristic equation in the form
we can take $\eta=\zeta^{-1}=(i|\xi|)^{-1}$ and write the equation \eqref{Matrix_Form}, with $L$ and $A$ defined in \eqref{LA}, as
$$ \hat{U}_t(\eta,t)=\eta^{-2}\left( L \eta^2-A\right) \hat{U}(\eta,t),$$
whose characteristic equation writes as
\begin{equation}\label{Charact_Equation_mu}
\det(L \eta^2-A-\mu I) = \tau \mu^3+\eta^{2}\mu^2-\beta  \eta^{2} \mu-\eta^{4}=0.
\end{equation}
Observe that we have the relation $$\lambda_j(\zeta)=\zeta^2\mu_j(\zeta^{-1})$$ between $\mu_j(\eta)$ and $\lambda_j(\zeta)$, solutions of \eqref{Characteristic_Polynomial} and \eqref{Charact_Equation_mu} respectively.

%\begin{equation}\label{Charact_Equation_mu}
%\Psi(\zeta^{-1}) =\tau \mu^3+\zeta^{-2}\mu^2-\beta  \zeta^{-2} \mu-\zeta^{-4}=0
%\end{equation}
%where $\mu(\zeta^{-1})$ is the eigenvalues of the of
%\eqref{Charact_Equation_mu}.
%We have the relation
%\begin{equation*}
%\lambda_j(\zeta)=\zeta^2\mu_j(\zeta^{-1}).
%\end{equation*}
Now, for $\eta\rightarrow 0$ %$|\zeta|^{-1}\rightarrow 0$,
we can write $\mu_j(
\eta)$ as:% in the form %(for simplicity, we put $\eta=\zeta^{-1}$)
\begin{equation}\label{Asymptotic_Expansion_mu}
\mu_j(\eta)=\mu_j^{(0)}+\mu_j^{(1)}\eta+\mu_j^{(2)}\eta^{2}+...,\qquad \qquad j=1,2, 3.
\end{equation}
Or, equivalently,
\begin{equation*}
\lambda_j(|\xi|)=-\mu_j^{(0)}|\xi|^2+\mu_j^{(1)} i |\xi| +\mu_j^{(2)}+...,\qquad \qquad j=1,2, 3.
\end{equation*}
Plugging \eqref{Asymptotic_Expansion_mu} into \eqref{Charact_Equation_mu}, we get, after performing some computations,
\begin{equation*}
\left\{
\begin{array}{ll}
\mu_j^{(0)}=0,&\qquad \text{for}\qquad j=1,2,3\vspace{0.2cm}\\
\mu_j^{(1)}=0,\qquad\qquad \qquad  \mu_j^{(2)}=-\dfrac{1}{\beta}&\qquad \text{for}\qquad j=1,\vspace{0.2cm}\\
\mu_j^{(1)}=\pm \sqrt{\dfrac{\beta}{\tau }}, \qquad\qquad  \mu_j^{(2)}=-\dfrac{(\beta-\tau )}{2\beta\tau } &\qquad \text{for}\qquad j=2,3.
\end{array}
\right.
\end{equation*}
Consequently, we deduce from above that for $|\xi|\rightarrow \infty$ we have

\begin{equation}\label{Estimate_Eigenvalues_infinity}
\func{Re}(\lambda_j(|\xi|))=
\left\{
\begin{array}{ll}
-\dfrac{1}{\beta}+O(|\xi|^{-1}),&\qquad \text{for}\qquad j=1,\vspace{0.2cm}\\
-\dfrac{(\beta-\tau )}{2\beta\tau }+O(|\xi|^{-1}),&\qquad \text{for}\qquad j=2,3,
\end{array}
\right.
\end{equation}
where we are denoting as $\lambda_1$ the real root and $\lambda_{2,3}$ the complex conjugate ones when $|\xi|\rightarrow \infty$ (see Proposition \ref{descriptioneig}). Under the assumption $0<\tau <\beta$, it is clear that $\func{Re}(\lambda_j)<0$ for all $j=1,2,3$ when $|\xi|\rightarrow \infty$.

Let us now divide the frequency space into three regions: low frequency, high frequency and middle frequency region, that is

\begin{equation*}
\Upsilon_{L}=\left\{\xi\in\R^N; |\xi|<\nu_1\ll 1\right\},\,\Upsilon_{H}=\left\{\xi\in\R^N; |\xi|>\nu_2\gg 1\right\},\,\Upsilon_{M}=\left\{\xi\in\R^N; \nu_1\leq|\xi|\leq \nu_2\right\}.
\end{equation*}

The choice of $\nu_1$ and $\nu_2$ will be discussed in the proofs of Proposition \ref{Proposition_Low} and of Lemma \ref{Lemma_Y_M_2}. For the moment, we need $\nu_1$ and $\nu_2$ sufficiently small and large, respectively, such that the asymptotic expansions of Propositions \ref{Proposition_Low} and \ref{Proposition_high} hold.

We write the solution of the system \eqref{Bose_Problem_Linear_Fourier} in the above  three regions. In the following Propositions \ref{Proposition_Low}, \ref{Proposition_high} and \ref{Proposition_Middle} we give bounds of the solution on each of this three regions using the previous asymptotic expansions of the eigenvalues. These bounds will be used in Theorem \ref{Theorem_Solution_Energy_Space} to proof the decay estimate of the solution of problem \eqref{Bose_Problem}.

\begin{proposition}\label{Proposition_Low}
If $0<\tau<\beta$, the solution $\hat{U}(\xi,t)$ of \eqref{Matrix_Form} satisfies, for all $\xi\in\Upsilon_{L}$ with $|\xi|\neq 0$, the estimates:

%\begin{eqnarray}\label{Low_F_Estimate_1}
%|\hat{u}(\xi,t)|&\leq& C_L(|\hat{u}_0|+|\xi|^2|\hat{u}_1|+|\hat{u}_2|) \left(e^{-c_1t} +e^{-c_2|\xi|^2 t}\cos (|\xi| t) \right)\notag\\
%&& +C_L\left( |\xi||\hat{u_0}|+\dfrac{1}{|\xi|}|\hat{u}_1|+\dfrac{1}{|\xi|}|\hat{u}_2|\right)e^{-c_2|\xi|^2 t}\sin (|\xi| t)
%\end{eqnarray}
%\textcolor{blue}{with $c_1=\frac{1}{\tau }$ and $c_2=\frac{\beta-\tau }{2}$.}

%\textcolor{blue}{(or also:)}
\begin{eqnarray}\label{Low_F_Estimate_1}
|\hat{u}(\xi,t)|&\leq& C_L\left(|\xi|^2|\hat{u}_0|+|\xi|^2|\hat{u}_1|+|\hat{u}_2|\right) e^{-c_1t}  \notag\\
                    && +C_L\left(|\hat{u}_0|+|\xi|^2|\hat{u}_1|+|\hat{u}_2|\right) e^{-c_2|\xi|^2 t}\cos(|\xi| t)  \notag\\
                    &&+ C_L \left( |\xi||\hat{u_0}|+\dfrac{1}{|\xi|}|\hat{u}_1|+\dfrac{1}{|\xi|}|\hat{u}_2|\right) e^{-c_2|\xi|^2 t}\sin(|\xi| t),\qquad \textrm{for all } t\geq 0.
\end{eqnarray}
with $c_1=\frac{1}{\tau }$ and $c_2=\frac{\beta-\tau }{2}$ and $C_L=C_L(\beta,\tau )>0$ (all positive constants). Moreover, if $\int_{\R^N}u_1(x)dx=\int_{\R^N}u_2(x)dx=0$ we have

\begin{eqnarray}\label{Low_F_Estimate}
|\hat{u}(\xi,t)|&\leq& C_L\left(|\xi|^2|\hat{u}_0|+|\xi|^2|\hat{u}_1|+|\hat{u}_2|\right) e^{-c_1t}  \notag\\
                    && +C_L\left(|\hat{u}_0|+|\xi|^2|\hat{u}_1|+|\hat{u}_2|\right) e^{-c_2|\xi|^2 t}\cos(|\xi| t)\notag\\
 &&+C_L\left( |\xi||\hat{u_0}|+\Vert u_1\Vert_{L^{1,1}}+\Vert u_2\Vert_{L^{1,1}}\right)e^{-c_2|\xi|^2 t}\sin (|\xi| t)
,\qquad \textrm{for all }  t\geq 0.
\end{eqnarray}
where $L^{1,1}$ is the $L^1$-weighted space defined by
\begin{equation}\label{eq:L11new}
L^{1,1}(\R^N)=\left\{u\in L^{1}\left( \mathbb{R}^N\right) ; \Vert u_1\Vert_{L^{1,1}(\R^N)}=\int_{\R^N} (1+|x|)|u(x)|dx<\infty \right\}.
\end{equation}

\end{proposition}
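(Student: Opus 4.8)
The plan is to produce an explicit representation of $\hat u(\xi,t)$ from the eigenvalues and then read off the decay by inserting the small-$|\xi|$ asymptotics. For $\xi\in\Upsilon_{L}$ with $|\xi|\neq 0$, Proposition \ref{descriptioneig} guarantees that the three roots of \eqref{Characteristic_Polynomial_xi} are distinct (a real root $\lambda_1\approx-\tfrac1\tau$ and a genuinely complex conjugate pair $\lambda_3=\overline{\lambda_2}$), so the solution of the scalar ODE \eqref{Bose_Problem_Linear_Fourier} is
\[
\hat u(\xi,t)=a_1 e^{\lambda_1 t}+a_2 e^{\lambda_2 t}+a_3 e^{\lambda_3 t},
\]
where $a_j=a_j(\xi)$ are fixed by the data \eqref{Initial_data_Fourier} through the Vandermonde system $\sum_{j}\lambda_j^{k}a_j=\hat u_k$, $k=0,1,2$. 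Solving it by Cramer's rule gives
\[
a_1=\frac{\hat u_2-(\lambda_2+\lambda_3)\hat u_1+\lambda_2\lambda_3\,\hat u_0}{(\lambda_1-\lambda_2)(\lambda_1-\lambda_3)},
\]
and $a_2,a_3$ by cyclic permutation of the indices; since the data are real in $x$ one has $a_3=\overline{a_2}$, so the last two terms combine into $2\,\Re\!\left(a_2 e^{\lambda_2 t}\right)$.

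Next I would insert the expansions of \eqref{Eigenvalues_L}--\eqref{Estimate_Eigenvalues_0}, namely $\lambda_1=-\tfrac1\tau+O(|\xi|)$ and $\lambda_{2,3}=-c_2|\xi|^2\pm i\omega+O(|\xi|^3)$ with $c_2=\tfrac{\beta-\tau}{2}$ and $\omega=|\xi|+O(|\xi|^3)$, which yield $\lambda_2+\lambda_3=O(|\xi|^2)$, $\lambda_2\lambda_3=|\lambda_2|^2=O(|\xi|^2)$, and $\lambda_2-\lambda_3=2i\omega$. For $a_1$ the denominator stays bounded away from $0$ (it tends to $\tfrac1{\tau^2}$) and the numerator is $\hat u_2+O(|\xi|^2)\hat u_1+O(|\xi|^2)\hat u_0$; with $e^{\lambda_1 t}\le C e^{-c_1 t}$, $c_1=\tfrac1\tau$, this is exactly the first line of \eqref{Low_F_Estimate_1}. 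The oscillatory lines come from $2\,\Re(a_2 e^{\lambda_2 t})=2e^{-c_2|\xi|^2 t}\bigl[\Re(a_2)\cos(|\xi| t)-\Im(a_2)\sin(|\xi| t)\bigr]$ up to the higher-order corrections, so the $\cos$ coefficient is $2\Re(a_2)$ and the $\sin$ coefficient is $-2\Im(a_2)$. Writing $a_2=N\bar D/|D|^2$ with $D=(\lambda_2-\lambda_1)(\lambda_2-\lambda_3)\approx \tfrac{2i|\xi|}{\tau}$, the factor $|D|^{-1}\sim|\xi|^{-1}$ is the \emph{sole} source of the $1/|\xi|$ singularities. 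Splitting $N\bar D$ into real and imaginary parts, a cancellation in $\Re(a_2)$ lowers the $\hat u_1$ coefficient from $O(1)$ to $O(|\xi|^2)$ and leaves $\Re(a_2)=O(1)|\hat u_0|+O(|\xi|^2)|\hat u_1|+O(1)|\hat u_2|$, giving the $\cos$ line, while $\Im(a_2)=O(|\xi|)|\hat u_0|+O(|\xi|^{-1})|\hat u_1|+O(|\xi|^{-1})|\hat u_2|$ gives the $\sin$ line; absorbing the remainders into $C_L$ (after shrinking $\nu_1$) produces \eqref{Low_F_Estimate_1}.

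For the refined bound \eqref{Low_F_Estimate} I would invoke the vanishing-mean hypotheses $\int_{\R^N}u_1\,dx=\int_{\R^N}u_2\,dx=0$, i.e.\ $\hat u_\ell(0)=0$ for $\ell=1,2$. Then $\hat u_\ell(\xi)=\int_{\R^N}u_\ell(x)\bigl(e^{-ix\cdot\xi}-1\bigr)\,dx$ and $|e^{-ix\cdot\xi}-1|\le|x||\xi|$ give $|\hat u_\ell(\xi)|\le|\xi|\,\Vert u_\ell\Vert_{L^{1,1}}$ in the weighted norm \eqref{eq:L11new}; substituting this into the two singular terms $\tfrac1{|\xi|}|\hat u_1|$ and $\tfrac1{|\xi|}|\hat u_2|$ of \eqref{Low_F_Estimate_1} cancels the factor $|\xi|^{-1}$ and replaces them by $\Vert u_1\Vert_{L^{1,1}}+\Vert u_2\Vert_{L^{1,1}}$, which is precisely \eqref{Low_F_Estimate}. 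I expect the main obstacle to be the bookkeeping in $a_2$: one must correctly form $N\bar D/|D|^2$, track which of $\hat u_0,\hat u_1,\hat u_2$ enters the $\cos$ versus the $\sin$ factor and with which power of $|\xi|$ (in particular the cancellation that demotes the $\hat u_1$ weight in $\Re(a_2)$ to $O(|\xi|^2)$), and control the error made in replacing the true frequency $\Im(\lambda_2)=|\xi|+O(|\xi|^3)$ and damping $-\Re(\lambda_2)=c_2|\xi|^2+O(|\xi|^3)$ by $|\xi|$ and $c_2|\xi|^2$; these remainders are harmless on $\Upsilon_{L}$ once $\nu_1$ is small and $C_L$ large, and everything else is routine estimation of bounded factors.
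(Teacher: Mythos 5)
Your proposal follows essentially the same route as the paper: represent $\hat u(\xi,t)$ via the three eigenvalues, solve the linear system for the coefficients determined by \eqref{Initial_data_Fourier}, insert the small-$|\xi|$ asymptotics \eqref{Estimate_Eigenvalues_0} to locate the $|\xi|^{-1}$ singularity in the sine coefficient and the cancellation that demotes the $\hat u_1$ weight in the cosine coefficient, and then use $|\hat u_\ell(\xi)|\le|\xi|\,\Vert u_\ell\Vert_{L^{1,1}}$ for mean-zero data to get \eqref{Low_F_Estimate}. The only difference is presentational (complex Vandermonde coefficients $a_2=\overline{a_3}$ solved by Cramer's rule versus the paper's real system \eqref{C_system} for $C_1,C_2,C_3$), and your identification of the error terms to be absorbed into $C_L$ matches the paper's comparison of $\hat u$ with the leading-order solution $\tilde u$.
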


\begin{remark}\label{rem:measurezero1}
  Observe that the estimate \eqref{Low_F_Estimate_1} is not satisfied if $|\xi|=0$ but, as it is a set of measure zero, it will not affect the decay of the solution in Theorem \ref{Theorem_Solution_Energy_Space}.
\end{remark}
\begin{proof}

According to part 1 of Proposition \ref{descriptioneig}, if $\xi\in\Upsilon_L$ is such that $|\xi|$ is small enough (that is, $|\xi|<\nu_1<\sqrt{m_1}$) we know that there exist one real root and two complex conjugate ones. For simplicity, we are going to denote $\lambda_1$ the real root and $\lambda_{2,3}$ the complex conjugate ones when $|\xi|\rightarrow 0$, both when $0<\tau /\beta<1/9$ and $1/9<\tau /\beta<1$ (see Remark \ref{rem:labeleigenv} and Proposition \ref{descriptioneig}). Hence, the solution of the equation \eqref{Bose_Problem_Linear_Fourier} when $\xi\in\Upsilon_L$ can be written in terms of the corresponding eigenvalues as

\begin{equation}\label{Solution_Formula_L}
\hat{u}(\xi,t)=C_1(\xi)e^{\lambda_1(\xi)t}+e^{ \func{Re}(\lambda_2(\xi))t}\left[C_2(\xi)\cos (\func{Im}(\lambda_2(\xi))t)+C_3(\xi)\sin (\func{Im}(\lambda_2(\xi))t)\right].
\end{equation}
We may use the initial values \eqref{Initial_data_Fourier} to find the above constants by solving the system
\begin{equation}\label{C_system}
\left\{
\begin{array}{ll}
C_1+C_2=\hat{u}_0,\vspace{0.2cm } \\
\lambda_1 C_1 +  \func{Re}(\lambda_2) C_2 + \func{Im}(\lambda_2) C_3 = \hat{u}_1,\vspace{0.2cm } \\
(\lambda_1)^2 C_1 +  \left( (\func{Re}(\lambda_2))^2-(\func{Im}(\lambda_2))^2\right) C_2 + 2 \func{Re}(\lambda_2)\func{Im}(\lambda_2) C_3 = \hat{u}_2
\end{array}
\right.
\end{equation}
(we are omitting the $\xi$ dependence in order to simplify the notation). By neglecting  the small terms, we have from \eqref{Estimate_Eigenvalues_0} that in $\Upsilon_L$ the eigenvalues are:
\begin{equation}\label{asympt_eigenv_xi0}
\left\{
\begin{array}{ll}
\lambda_1(\xi) \sim  -\dfrac{1}{\tau },&\vspace{0.3cm}\\
\lambda_2(\xi) \sim i|\xi|-\dfrac{1}{2}(\beta-\tau )|\xi|^2,&\vspace{0.3cm}\\
\lambda_3(\xi)\sim -i|\xi|-\dfrac{1}{2}(\beta-\tau )|\xi|^2. &
\end{array}
\right.
\end{equation}
Solving the system \eqref{C_system} and using the asymptotic expressions of the eigenvalues when $|\xi|\to 0$ in \eqref{asympt_eigenv_xi0} we get that, in $\Upsilon_L$,
\begin{equation*}
\left\{
\begin{array}{ll}
C_1(\xi)= \left( -\tau ^2|\xi|^2 + O(|\xi|^3)\right) \hat{u}_0+ \left( -\tau ^2(\beta-\tau )|\xi|^2 + O(|\xi|^3)\right) \hat{u}_1+ \left( -\tau ^2 + O(|\xi|)\right) \hat{u}_2,\vspace{0.3cm }\\
C_2(\xi)= \left( -1 + O(|\xi|)\right) \hat{u}_0+ \left( \tau ^2(\beta-\tau )|\xi|^2 + O(|\xi|^3)\right) \hat{u}_1 + \left( \tau ^2 + O(|\xi|)\right) \hat{u}_2, \vspace{0.3cm }\\
C_3(\xi)= \left( -\frac{\beta+\tau }{2}|\xi| + O(|\xi|^2)\right) \hat{u}_0+ \left( -\frac{1}{|\xi|} + O(1)\right) \hat{u}_1+ \left( -\frac{\tau }{|\xi|} + O(1)\right) \hat{u}_2.
\end{array}
\right.
\end{equation*}

We can now take the following approximate solution of \eqref{Bose_Problem_Linear_Fourier} and \eqref{Initial_data_Fourier},
\begin{equation*}
  \tilde{u}(\xi,t) = \widetilde{C}_1(\xi) e^{-\frac{1}{\tau }t} + e^{-\frac{\beta-\tau }{2}|\xi|^2 t}\left( \widetilde{C}_2(\xi) \cos(|\xi|  t) + \widetilde{C}_3(\xi) \sin(|\xi|  t)  \right)
\end{equation*}
where
\begin{equation*}
\left\{
\begin{array}{ll}
\widetilde{C}_1(\xi)=  -\tau ^2|\xi|^2  \hat{u}_0 -\tau ^2(\beta-\tau )|\xi|^2  \hat{u}_1 -\tau ^2  \hat{u}_2,\vspace{0.3cm }\\
\widetilde{C}_2(\xi)=  -\hat{u}_0+ \tau ^2(\beta-\tau )|\xi|^2  \hat{u}_1 + \tau ^2  \hat{u}_2, \vspace{0.3cm }\\
\widetilde{C}_3(\xi)=  -\frac{\beta+\tau }{2}|\xi|  \hat{u}_0 -\frac{1}{|\xi|}  \hat{u}_1 -\frac{\tau }{|\xi|}  \hat{u}_2.
\end{array}
\right.
\end{equation*}

Observe that, at a first leading order, solving the system \eqref{C_system} is equivalent to solving
\begin{equation*}
\left\{
\begin{array}{ll}
C_1+C_2=\hat{u}_0,\vspace{0.2cm }\\
-\dfrac{C_1}{\tau }-\dfrac{1}{2} C_2 |\xi| ^2 (\beta -\tau )+C_3 |\xi|=\hat{u}_1,\vspace{0.2cm}\\
\dfrac{C_1}{\tau ^2}+\left(\dfrac{1}{4}  |\xi| ^4 (\beta -\tau )^2-| \xi |^2\right)C_2-C_3 |\xi| ^3 (\beta -\tau )=\hat{u}_2.
\end{array}
\right.
\end{equation*}
which has the previous $\widetilde{C}_1$, $\widetilde{C}_2$, $\widetilde{C}_3$ as exact solution.

It is immediate to see that $\tilde{u}(\xi,t)$ satisfies the bound given in \eqref{Low_F_Estimate_1}. Also, from the previous calculus on the asymptotic expressions of $C_1,C_2$ and $C_3$ it is clear that
\begin{equation*}
\lim_{|\xi|\to 0}  \left| \hat{u}(\xi,t)-\tilde{u}(\xi,t) \right| = 0\qquad  \textrm{ for all } t\geq 0,
\end{equation*}
where $\hat{u}(\xi,t)$ is the exact solution given in \eqref{Solution_Formula_L}. Hence, the bound \eqref{Low_F_Estimate_1} is also satisfied by the exact solution $\hat{u}(\xi,t)$.

To prove \eqref{Low_F_Estimate}, we assume  that $\int_{\R^N}u_1(x)dx=\int_{\R^N}u_2(x)dx=0$. Then, we may show (see \cite[Lemma 3.1]{Ike04})
\begin{equation}\label{L11bound}
|\hat{u}_i(\xi)|\leq |\xi|\Vert u_i\Vert_{L^{1,1}(\R^N)},\qquad i=1,2,
\end{equation}
where $L^{1,1}(\R^N)$ is the $L^1$ weighted space defined in \eqref{eq:L11new}.
Consequently, for $|\xi|$ small enough, we have
\begin{equation*}
C_3\leq C\left( |\xi||\hat{u_0}|+\Vert u_1\Vert_{L^{1,1}(\R^N)}+\Vert u_2\Vert_{L^{1,1}(\R^N)}\right).
\end{equation*}
Therefore, taking the last inequality into account, the solution in \eqref{Solution_Formula_L}  satisfies \eqref{Low_F_Estimate}.
\end{proof}

\begin{proposition}\label{Proposition_high}
If $0<\tau<\beta$, the solution $\hat{u}(\xi,t)$ of \eqref{Bose_Problem_Linear_Fourier} satisfies in $\Upsilon_{H}$ the estimate:
\begin{equation}\label{High_F_Estimate}
|\hat{u}(\xi,t)|\leq C_H\left(\left(1+\frac{1}{|\xi|}+\frac{1}{|\xi|^2}\right)|\hat{u}_0(\xi)|+\left(\frac{1}{|\xi|}+\frac{1}{|\xi|^2}\right) |\hat{u}_1(\xi)|+\left(\frac{1}{|\xi|^2}+\frac{1}{|\xi|^3}\right)|\hat{u}_2(\xi)|\right)e^{-c_3 t},
\end{equation}
for all $t\geq 0$, where $c_3=\min\left\{ \frac{1}{\beta},\frac{\beta-\tau }{2\beta\tau } \right\}$  and $C_H=C_H(\beta,\tau )>0$ (all positive constants).
\end{proposition}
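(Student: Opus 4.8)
The plan is to mimic the proof of Proposition~\ref{Proposition_Low}, replacing the behaviour of the eigenvalues near $|\xi|=0$ by their behaviour as $|\xi|\to\infty$. First I would use Proposition~\ref{descriptioneig}: for $|\xi|>\nu_2$ large enough, in all the regimes of $\tau/\beta$, the matrix $\Phi(\xi)$ has one real eigenvalue, which I relabel $\lambda_1$, together with a complex conjugate pair $\lambda_2=\overline{\lambda_3}$. Consequently $\hat{u}(\xi,t)$ admits the representation \eqref{Solution_Formula_L}, and the coefficients $C_1,C_2,C_3$ are again fixed by the linear system \eqref{C_system} obtained from the initial data \eqref{Initial_data_Fourier}.

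Next I would insert the high-frequency asymptotics. From \eqref{Estimate_Eigenvalues_infinity} together with the expansion \eqref{Asymptotic_Expansion_mu} one has, as $|\xi|\to\infty$,
$$\lambda_1\sim -\frac{1}{\beta},\qquad \func{Re}(\lambda_2)\sim -\frac{\beta-\tau}{2\beta\tau},\qquad \func{Im}(\lambda_2)\sim \sqrt{\frac{\beta}{\tau}}\,|\xi|,$$
so that $\lambda_1$ and $\func{Re}(\lambda_2)$ remain of order $1$ while $\func{Im}(\lambda_2)$ grows linearly in $|\xi|$. Writing $a=\lambda_1$, $p=\func{Re}(\lambda_2)$ and $q=\func{Im}(\lambda_2)$, a direct computation shows that the determinant of \eqref{C_system} factorises as $q\big((p-a)^2+q^2\big)$, which is therefore of order $|\xi|^3$ for large $|\xi|$.

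I would then solve \eqref{C_system} by Cramer's rule and read off the order in $|\xi|$ of each coefficient, keeping in mind that the numerators are built from the symmetric combinations $p^2+q^2$, $2pq$ and $p^2-q^2$ whose leading part is the order-$|\xi|^2$ quantity $q^2$. Dividing by the order-$|\xi|^3$ determinant one obtains
$$|C_1|\lesssim |\hat{u}_0|+\frac{1}{|\xi|^2}|\hat{u}_1|+\frac{1}{|\xi|^2}|\hat{u}_2|,\qquad |C_2|\lesssim \frac{1}{|\xi|^2}\big(|\hat{u}_0|+|\hat{u}_1|+|\hat{u}_2|\big),$$
$$|C_3|\lesssim \frac{1}{|\xi|}|\hat{u}_0|+\frac{1}{|\xi|}|\hat{u}_1|+\frac{1}{|\xi|^3}|\hat{u}_2|.$$
Finally, from \eqref{Solution_Formula_L} and $|\cos|,|\sin|\le 1$ I would bound $|\hat{u}(\xi,t)|\le\big(|C_1|+|C_2|+|C_3|\big)e^{-c_3 t}$, using that by \eqref{Estimate_Eigenvalues_infinity} both $\lambda_1$ and $\func{Re}(\lambda_2)$ are at most $-c_3$ once $\nu_2$ is large enough, with $c_3=\min\{1/\beta,(\beta-\tau)/(2\beta\tau)\}$. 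Collecting the coefficients of $|\hat{u}_0|$, $|\hat{u}_1|$ and $|\hat{u}_2|$ then reproduces exactly the weights $1+|\xi|^{-1}+|\xi|^{-2}$, $|\xi|^{-1}+|\xi|^{-2}$ and $|\xi|^{-2}+|\xi|^{-3}$ of \eqref{High_F_Estimate}.

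The main obstacle is the bookkeeping in the third step: one must identify precisely which terms in each Cramer numerator survive at leading order (the $q^2$-terms) and which combine at lower order, so that the stated powers of $|\xi|$ emerge sharply rather than being overestimated. As in Proposition~\ref{Proposition_Low}, a minor technical point is that the exact eigenvalues should first be replaced by their leading asymptotics, building an approximate solution and checking that its difference from the exact solution is negligible on $\Upsilon_{H}$; since the error terms in \eqref{Estimate_Eigenvalues_infinity} are uniformly $O(|\xi|^{-1})$ there, this replacement does not alter the orders computed above.
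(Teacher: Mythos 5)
Your proposal follows essentially the same route as the paper: one real eigenvalue plus a conjugate pair on $\Upsilon_H$, the representation \eqref{Solution_Formula_L} with coefficients determined by \eqref{C_system}, the asymptotics \eqref{Estimate_Eigenvalues_infinity}, and the resulting orders $|C_1|,|C_2|,|C_3|$ agree exactly with the paper's explicit expansions of $D_1,D_2,D_3$ (your Cramer determinant $q\bigl((p-a)^2+q^2\bigr)\sim|\xi|^3$ checks out). The only difference is organizational -- you solve by Cramer's rule where the paper writes out the leading-order system explicitly -- so this is correct and essentially the paper's own argument.
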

\begin{proof}
According to part 1 of Proposition \ref{descriptioneig}, if $\xi\in\Upsilon_H$ such that $|\xi|$ is large enough (that is, $|\xi|>\nu_2>\sqrt{m_2}$) we know that there exist one real root and two complex conjugate ones, namely $\lambda_1$ and $\lambda_{2,3}$. So, as before, the solution of \eqref{Bose_Problem_Linear_Fourier} can be written as
\begin{equation}\label{Solution_Formula_H}
\hat{u}(\xi,t)=D_1(\xi)e^{\lambda_1(\xi)t}+e^{ \func{Re}(\lambda_2(\xi))t}\left[D_2(\xi)\cos (\func{Im}(\lambda_2(\xi))t)+D_3(\xi)\sin (\func{Im}(\lambda_2(\xi))t)\right].
\end{equation}
where $D_i(\xi)$ can be written in terms of the initial data \eqref{Initial_data_Fourier} and hence satisfy the same system as in Proposition \ref{Proposition_Low}, \eqref{C_system}. From \eqref{Estimate_Eigenvalues_infinity} and by neglecting the small terms, we also know that, when $|\xi|\to \infty$,
\begin{equation}\label{asympt_eigenv_xi_infty}
\left\{
\begin{array}{ll}
\lambda_1(\xi) \sim -\dfrac{1}{\beta},&\vspace{0.3cm}\\
\lambda_2(\xi)\sim -\dfrac{\beta-\tau }{2\beta\tau }+i|\xi|\sqrt{\dfrac{\beta}{\tau }},&\qquad \vspace{0.3cm}\\
\lambda_3(\xi) \sim  -\dfrac{\beta-\tau }{2\beta\tau }-i|\xi|\sqrt{\dfrac{\beta}{\tau }}.&
\end{array}
\right.
\end{equation}

Hence, observe that at a first leading order, solving the corresponding system is equivalent to solving
\begin{equation*}
\left\{
\begin{array}{ll}
D_1+D_2=\hat{u}_0,\vspace{0.2cm }\\
-\dfrac{D_1}{\beta }+\dfrac{D_2 (\tau -\beta )}{2 \beta \tau }+D_3 |\xi|  \sqrt{\dfrac{\beta }{\tau }}=\hat{u}_1,\vspace{0.2cm}\\
\dfrac{D_1}{\beta ^2}+\dfrac{D_2 \left( (\beta -\tau )^2-4  \beta^3 \tau  |\xi| ^2\right)}{4 \beta^2 \tau ^2  }+\dfrac{D_3 |\xi|  \sqrt{\dfrac{\beta }{\tau }} (\tau -\beta )}{\beta \tau }=\hat{u}_2.
\end{array}
\right.
\end{equation*}

Solving the corresponding system and using the asymptotic expressions of the eigenvalues when $|\xi|\to\infty$ in \eqref{asympt_eigenv_xi_infty} we get that in $\Upsilon_H$
\begin{equation*}
\left\{
\begin{array}{ll}
{D}_1(\xi)=  \left( 1+   O(|\xi|^{-1}) \right) \hat{u}_0  +\left(\frac{\beta-\tau }{\beta^2|\xi|^2} + O(|\xi|^{-3}) \right) \hat{u}_1 + \left(\frac{\tau }{\beta|\xi|^2} + O(|\xi|^{-3}) \right) \hat{u}_2,\vspace{0.3cm }\\
{D}_2(\xi)=  \left(\frac{2\tau -\beta}{\beta^3|\xi|^2} + O(|\xi|^{-3}) \right)\hat{u}_0 + \left(\frac{\tau -\beta}{\beta^2|\xi|^2} +O(|\xi|^{-3}) \right) \hat{u}_1 - \left(\frac{\tau }{\beta|\xi|^2} + O(|\xi|^{-3}) \right)\hat{u}_2, \vspace{0.3cm }\\
{D}_3(\xi)=  \left(\frac{1}{\beta\sqrt{\beta/\tau }|\xi|} + O(|\xi|^{-2}) \right) \hat{u}_0 + \left(\frac{1}{\sqrt{\beta/\tau }|\xi|} + O(|\xi|^{-2}) \right) \hat{u}_1 +\left( \frac{3\tau -\beta}{2\beta^2\sqrt{\beta/\tau }|\xi|^3} + O(|\xi|^{-4}) \right) \hat{u}_2.
\end{array}
\right.
\end{equation*}

In a similar way as in Proposition \ref{Proposition_Low}, we deduce that \eqref{High_F_Estimate} holds.

\end{proof}

\begin{lemma}\label{Lemma_Y_M_2}
There is a constant $c_4>0$, such that, for all $\xi\in\Upsilon_M$,
\begin{equation}\label{Bound_M_Eigenvalues}
\func{Re}(\lambda_j(\xi))<-c_4<0,
\end{equation}
where $\lambda_j(\xi),\, j=1,2,3$ are the eigenvalues of the matrix $\Phi(\xi)$.
\end{lemma}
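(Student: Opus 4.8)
The plan is to exploit the compactness of the middle-frequency region together with the pointwise strict negativity of the real parts that is already guaranteed by the earlier results. Since the characteristic equation \eqref{Characteristic_Polynomial_xi} depends on $\xi$ only through $|\xi|^2$, each eigenvalue may be regarded as a function of the single variable $r=|\xi|$, and $\Upsilon_M$ corresponds to the compact interval $[\nu_1,\nu_2]$ with $0<\nu_1\leq\nu_2<\infty$.

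First I would verify that $\func{Re}(\lambda_j(\xi))<0$ holds pointwise for every $\xi\in\Upsilon_M$ and every $j$. If $\lambda_j(\xi)$ is real, part 2(a) of Proposition \ref{descriptioneig} gives the sharper bound $\lambda_j(\xi)<-1/\beta<0$. If $\lambda_j(\xi)$ is nonreal, the same part of Proposition \ref{descriptioneig} yields $\func{Re}(\lambda_j(\xi))<0$; moreover, since $|\xi|\neq 0$ on $\Upsilon_M$, Lemma \ref{Lemma_Reiss} rules out a purely imaginary root, so the inequality is strict. Consequently the function
\begin{equation*}
g(r)=\max_{j=1,2,3}\func{Re}(\lambda_j(r))
\end{equation*}
satisfies $g(r)<0$ at every $r\in[\nu_1,\nu_2]$.

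The remaining step is a standard compactness argument. The roots of the cubic \eqref{Characteristic_Polynomial_xi} depend continuously on its coefficients, hence on $r$, so $g$ is continuous on $[\nu_1,\nu_2]$ (alternatively, one may invoke the continuous labelling of the $\lambda_j$ recalled in Remark \ref{rem:labeleigenv}). A continuous function on a compact interval attains its maximum, and since $g<0$ everywhere this maximum is a negative number which I call $-c_4$, with $c_4>0$. This gives $\func{Re}(\lambda_j(\xi))\leq g(|\xi|)\leq -c_4$ for all $\xi\in\Upsilon_M$ and all $j$, which is the claim.

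The only delicate point is the continuity of $g$ across the values $r=\sqrt{m_1}$ and $r=\sqrt{m_2}$, where (in the case $0<\tau/\beta<1/9$) two roots coalesce and the individual eigenvalue branches need not be smooth. I would sidestep this by using the continuity of the maximal real part as a function of the polynomial coefficients, a property insensitive to root coalescence, rather than differentiating particular branches; this keeps the argument uniform across all the sub-cases of Proposition \ref{descriptioneig} and avoids any case distinction in the final estimate.
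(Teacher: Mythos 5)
Your proof is correct, but it takes a genuinely different route from the paper's. You establish pointwise strict negativity from part 2(a) of Proposition \ref{descriptioneig} (the appeal to Lemma \ref{Lemma_Reiss} is redundant here, since that part already gives the strict inequality $\Re(\lambda)<0$ for nonreal roots, but it does no harm) and then conclude by compactness of $[\nu_1,\nu_2]$ and continuity of the maximal real part of the root set as a function of the coefficients --- a property that, as you rightly note, survives root coalescence at $\sqrt{m_1},\sqrt{m_2}$ and so dispenses with any case analysis. The paper instead argues case by case on the ratio $\tau/\beta$ and on the subintervals determined by $\sqrt{m_1}$ and $\sqrt{m_2}$: real roots are bounded by $-1/\beta$ via part 2(a), and the nonreal branches are controlled by the monotonicity statement in part 2(b) ($\Re(\lambda_{2,3})$ decreases in $|\xi|$), which pins them below their value at $|\xi|=\nu_1$. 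What the paper's longer argument buys is an \emph{explicit} constant, $c_4=\min\left\{\tfrac{1}{\beta},\,|\Re(\lambda_{2,3}(\xi_{\nu_1}))|\right\}$, which is then quoted verbatim in the decay constants of Theorems \ref{Theorem_Solution_Energy_Space} and \ref{Theorem_N_3}; your compactness argument proves the lemma as stated but only yields the existence of some $c_4>0$, so if you wanted to recover the explicit value used downstream you would still need the monotonicity input from part 2(b). What your argument buys in exchange is uniformity and brevity: it is insensitive to the subdivision of $\Upsilon_M$ and would generalize immediately to any compact frequency region on which the spectrum avoids the closed right half-plane.
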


\begin{proof}
Let us recall that $\Upsilon_{M}=\left\{\xi\in\R^N;\nu_1\leq|\xi|\leq \nu_2\right\}$ and that we have chosen $\nu_1$ and $\nu_2$ sufficiently small and large, respectively, such that the asymptotic expansions of Propositions \ref{Proposition_Low} and \ref{Proposition_high} hold.  That is, we have chosen $\nu_1$ and ${\nu_2}$ such that $0<\nu_1<\sqrt{m_1}$ and $\nu_2>\sqrt{m_2}$, where $m_1,m_2$ are the constants defined in \eqref{m1m2} and \eqref{eq:AB}. Let us call $\xi_{\nu_1}$ and $\xi_{\nu_2}$ those $\xi$ such that $|\xi_{\nu_1}|=\nu_1$ and $|\xi_{\nu_2}|={\nu_2}$, respectively.

First, suppose that $\frac{1}{9}<\frac{\tau }{\beta}<1$. According to parts 1.b), 2.a) and 2.b) of Proposition \ref{descriptioneig}, $\lambda_1(\xi)\in\mathbb{R}$ and $\lambda_{2,3}(\xi)\in\mathbb{C}\setminus\mathbb{R}$ for all $|\xi|>0$ (in particular, for all $\xi\in \Upsilon_M$) and fulfill
\begin{equation}\label{eq:bounds}
  \lambda_1(\xi)<-\frac{1}{\beta}\qquad \textrm{ and }\qquad \Re(\lambda_{2,3}(\xi))<   \Re(\lambda_{2,3}(\xi_{\nu_1}))<0.
\end{equation}

%\textcolor{blue}{(Not necessary, but only if we want to improve the decay exponent: actually, $\lambda_1(\xi)$ is an increasing function of $|\xi|$. That means that $\lambda_1(\xi)\leq \lambda_1(\xi_N) < -\frac{1}{\beta}$ if $\xi\in\Upsilon_M$.)}

Suppose now that $0<\frac{\tau }{\beta}<\frac{1}{9}$. From the choice of $\nu_1,\nu_2$ (see above) and according to Proposition \ref{descriptioneig}, we have $\lambda_{2}(\xi_{\nu_1}), \lambda_{2}(\xi_{\nu_2})\in\mathbb{C}\setminus\mathbb{R}$ and we can actually divide $\Upsilon_M$ in three parts: $\xi\in\Upsilon_M$ such that $|\xi|\in[\nu_1,\sqrt{m_1}) $, $\xi\in\Upsilon_M$ such that $|\xi|\in [\sqrt{m_1},\sqrt{m_2}] $ and $\xi\in\Upsilon_M$ such that $|\xi| \in(\sqrt{m_2},\nu_2]$. For those $\xi$ such that $|\xi|\in [\nu_1,\sqrt{m_1}) \cup (\sqrt{m_2},\nu_2]$, the same bounds for the real and complex eigenvalues that in the previous case hold. For those $\xi$ such that $|\xi|\in [\sqrt{m_1},\sqrt{m_2}]$, we recall that $\lambda_{1,2,3}(\xi)\in\mathbb{R}$ (see part 2.c) of Proposition \ref{descriptioneig}) and, hence, we can use part 2.a) of this proposition and we obtain $\lambda_{1,2,3}(\xi) < -\frac{1}{\beta}$.

%\textcolor{blue}{(Not necessary, but only if we want to improve the decay exponent: as $\lambda_1(\xi)$ is an increasing function of $|\xi|$ outside $[\sqrt{m_1},\sqrt{m_2}]$, $\lambda_1(\xi)\leq \lambda_1(\xi_N) < -\frac{1}{\beta}$ if $\xi\in\Upsilon_M$. In $[\sqrt{m_1},\sqrt{m_2}]$, two of the three sequences, for instance $\lambda_{1,3}$, are an increasing function of $|\xi|$ and one, for instance $\lambda_{2}$, is a decreasing one (this last consideration was not included in our previous paper, but it is true). So,  $\lambda_{1,2,3}(\xi) < \max\{ \lambda_1(\sqrt{m_2}), \lambda_3(\sqrt{m_2}), \lambda_2(\sqrt{m_1})\}<-\frac{1}{\beta}$ for $|\xi|\in [\sqrt{m_1},\sqrt{m_2}]$. )}

Finally, consider the special case in which $\frac{\tau }{\beta}=\frac{1}{9}$. Again according to Proposition \ref{descriptioneig}, \eqref{eq:bounds} holds for all $\xi\in\Upsilon_M$ with $|\xi|\neq \sqrt{m_1}=\sqrt{m_2}$, which is the one in which the eigenvalue is a triple real one.  Actually, this triple eigenvalue is $\lambda_{1,2,3}=-\frac{3}{\beta}<-\frac{1}{\beta}$ and, hence, \eqref{eq:bounds} holds for all $\xi\in\Upsilon_M$ when $\frac{\tau }{\beta}=\frac{1}{9}$.

Therefore, we can conclude that \eqref{Bound_M_Eigenvalues} holds in $\Upsilon_M$ with $c_4=\min\left\{ \frac{1}{\beta},|\Re(\lambda_{2,3}(\xi_{\nu_1}))| \right\}>0$. %(\textcolor{blue}{Or we can improve it according to previous comments if necessary).}

\end{proof}

\begin{proposition}\label{Proposition_Middle}
There exists two positive constants $C_M$ and ${c}_4$ such that the solution $\hat{u}(\xi,t)$ of \eqref{Bose_Problem_Linear_Fourier} satisfies in $\Upsilon_{M}$ one of the following estimates:
\begin{equation}\label{Middle_F_Estimate}
|\hat{u}(\xi,t)|\leq C_M\left(|\hat{u}_0(\xi)|+ |\hat{u}_1(\xi)|+|\hat{u}_2(\xi)|\right)e^{-c_4 t},\quad  \textrm{ if } |\xi|\neq 0,\sqrt{m_1},\sqrt{m_2}.
\end{equation}
or
\begin{equation}\label{Middle_F_Estimate_2}
|\hat{u}(\xi,t)|\leq C_M(1+t)\left(|\hat{u}_0(\xi)|+ |\hat{u}_1(\xi)|+|\hat{u}_2(\xi)|\right)e^{-c_4 t}, \ \textrm{ if } |\xi|= 0, \textrm{ or }\frac{\tau }{\beta}\neq\frac{1}{9} \textrm{ and }|\xi|=\sqrt{m_1}, \sqrt{m_2},
\end{equation}
or
\begin{equation}\label{Middle_F_Estimate_3}
|\hat{u}(\xi,t)|\leq C_M(1+t+t^2)\left(|\hat{u}_0(\xi)|+ |\hat{u}_1(\xi)|+|\hat{u}_2(\xi)|\right)e^{-\frac{3}{\beta} t},\quad \textrm{ if } \frac{\tau }{\beta}=\frac{1}{9} \textrm{ and } |\xi|= \sqrt{m_1} =\sqrt{m_2},
\end{equation}
for all $t\geq 0,$ where $c_4$ is defined in the proof of Lemma \ref{Lemma_Y_M_2} and $C_M=C_M(\beta,\tau )$.

\end{proposition}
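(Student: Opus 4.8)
The plan is to represent $\hat u(\xi,t)$ through the eigenvalues of $\Phi(\xi)$, exactly as in the proofs of Propositions~\ref{Proposition_Low} and~\ref{Proposition_high}, but replacing the explicit asymptotic expansions (available only as $|\xi|\to 0,\infty$) by the qualitative information of Proposition~\ref{descriptioneig} together with the decay bound of Lemma~\ref{Lemma_Y_M_2}. Since $\hat U(\xi,t)=e^{t\Phi(\xi)}\hat U_0(\xi)$ and $\hat u$ is its first component, the whole statement reduces to controlling the fundamental solutions $e^{\lambda_j(\xi)t}$ and the coefficients multiplying them. The algebraic form of this representation is dictated by the multiplicity of the roots of \eqref{Characteristic_Polynomial_xi}: three simple roots away from the critical circles $\{|\xi|=\sqrt{m_1}\}$ and $\{|\xi|=\sqrt{m_2}\}$, a double root on those circles when $\tau/\beta\neq\frac19$, and a single triple root $\lambda=-\frac{3}{\beta}$ at $|\xi|=\sqrt{m_1}=\sqrt{m_2}$ when $\tau/\beta=\frac19$. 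Each of the three estimates corresponds to one of these Jordan structures, and in all of them the decay factor comes from $\func{Re}(\lambda_j(\xi))<-c_4$.

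First I would treat the generic case $|\xi|\neq 0,\sqrt{m_1},\sqrt{m_2}$. Writing $\hat u(\xi,t)=\sum_j C_j(\xi)e^{\lambda_j(\xi)t}$, grouping the complex conjugate pair into a $\cos/\sin$ combination as in \eqref{Solution_Formula_L} whenever it is nonreal, the coefficients $C_j$ solve the Vandermonde-type system \eqref{C_system} built from the $\lambda_j(\xi)$. Since the roots are simple this system is invertible and its solution is a continuous function of $\xi$; on any compact subset of $\Upsilon_M$ kept away from the two critical circles the $|C_j(\xi)|$ are therefore bounded by $C(|\hat u_0|+|\hat u_1|+|\hat u_2|)$. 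Combining this with $\func{Re}(\lambda_j(\xi))<-c_4$ from Lemma~\ref{Lemma_Y_M_2} gives $|\hat u(\xi,t)|\le C_M(|\hat u_0|+|\hat u_1|+|\hat u_2|)e^{-c_4 t}$, which is \eqref{Middle_F_Estimate}.

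On the critical circles I would switch to the confluent representation forced by the Jordan structure, namely $\hat u=C_1 e^{\lambda_1 t}+(C_2+C_3 t)e^{\lambda_2 t}$ at a double root and $\hat u=(C_1+C_2 t+C_3 t^2)e^{\lambda t}$ with $\lambda=-\frac{3}{\beta}$ at the triple root. The coefficients now solve the corresponding confluent Vandermonde systems, obtained by differentiating in $\lambda$ the relations defining \eqref{C_system}; these are again invertible with coefficients controlled by the initial data. The prefactors $1+t$ and $1+t+t^2$ are precisely the degrees of the Jordan blocks, and multiplying them by $e^{\func{Re}(\lambda_j)t}\le e^{-c_4 t}$ (respectively by the explicit $e^{-3t/\beta}$) yields \eqref{Middle_F_Estimate_2} and \eqref{Middle_F_Estimate_3}. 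The double-root structure also occurs formally at $|\xi|=0$, where $\lambda_{2,3}=0$, but that point lies outside $\Upsilon_M$ and, being of measure zero, does not enter the later integrals.

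The main obstacle is the uniformity of $C_M$ as $\xi$ approaches a critical circle from the simple-root region: there two eigenvalues coalesce, the Vandermonde system degenerates, and the individual coefficients $C_j(\xi)$ blow up even though $\hat u(\xi,t)$ stays bounded. The way around this is to estimate $e^{t\Phi(\xi)}$ directly rather than through its spectral decomposition: the matrix exponential is jointly continuous in $(\xi,t)$ across the critical circles, the blow-up of $C_2,C_3$ being compensated by $e^{\lambda_2 t}-e^{\lambda_3 t}\to 0$ as the combination converges to the confluent form. Since $\Upsilon_M$ is compact and, by part 2 of Proposition~\ref{descriptioneig}, the slowest real part is attained at $|\xi|=\nu_1$ while the double and triple roots satisfy $\func{Re}(\lambda)<-\frac1\beta\le-c_4$ with a strict gap, the at most quadratic transients produced near a critical circle are absorbed into $e^{-c_4 t}$. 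This is what allows a single constant $C_M=C_M(\beta,\tau)$ to serve in all three estimates, completing the proof.
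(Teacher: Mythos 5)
Your proposal follows essentially the same route as the paper: write $\hat u(\xi,t)$ in terms of the eigenvalues of $\Phi(\xi)$, split into the four cases dictated by the root multiplicities of \eqref{Characteristic_Polynomial_xi} (simple roots, real or complex; a double real root on the circles $|\xi|=\sqrt{m_1},\sqrt{m_2}$; a triple root when $\tau/\beta=1/9$), bound the coefficients of the corresponding (confluent) Vandermonde systems by $C(|\hat u_0|+|\hat u_1|+|\hat u_2|)$ on the compact set $\Upsilon_M$, and combine with the uniform spectral bound $\Re(\lambda_j(\xi))<-c_4$ of Lemma \ref{Lemma_Y_M_2} to produce the factors $e^{-c_4t}$, $(1+t)e^{-c_4t}$ and $(1+t+t^2)e^{-3t/\beta}$. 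Your final paragraph on the uniformity of $C_M$ as $\xi$ approaches a coalescence circle (passing to the matrix exponential $e^{t\Phi(\xi)}$ rather than the individual coefficients $C_j(\xi)$) is a welcome refinement of a point the paper handles only by asserting that the $C_i(\xi)$ are bounded on $\Upsilon_M$.
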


\begin{remark}\label{rem:measurezero2}
 Observe that the estimates \eqref{Middle_F_Estimate_2} and \eqref{Middle_F_Estimate_3} are satisfied in a set of measure zero, so they will not affect the decay of the solution in Theorem \ref{Theorem_Solution_Energy_Space}.
\end{remark}
\begin{proof}
First, and according to Proposition \ref{descriptioneig}, it is clear that for all $\xi\in \Upsilon_M$, the characteristic equation \eqref{Characteristic_Polynomial_xi} has three roots satisfying one of the following cases:
\begin{itemize}
\item one real and two complex conjugate roots (see the cases in Proposition \ref{descriptioneig});
\item three distinct real roots (see the cases in Proposition \ref{descriptioneig});
\item there is one real root and another real root of double multiplicity (if $|\xi|=0$, or $\frac{\tau }{\beta}\neq\frac{1}{9}$ and $|\xi|=\sqrt{m_1} \textrm{ or }\sqrt{m_2}$);
\item  a real root with triple multiplicity (if $\frac{\tau }{\beta}=\frac{1}{9} \textrm{ and } |\xi|= \sqrt{m_1} =\sqrt{m_2}$).
\end{itemize}
In the following, we discuss the above four cases.

First, suppose \eqref{Characteristic_Polynomial_xi} has one  real root and two  complex conjugate ones, that, for simplicity of notation, we will call $\lambda_1(\xi)$ and $\lambda_{2,3}(\xi)$ respectively (see Remark \ref{rem:labeleigenv}).  Then the solution is written as in \eqref{Solution_Formula_L}:
\begin{equation}\label{Solution_Formula_2}
\hat{u}(\xi,t)=C_1(\xi)e^{\lambda_1(\xi)t}+e^{ \func{Re}(\lambda_2(\xi))t}\left[C_2(\xi)\cos (\func{Im}(\lambda_2(\xi))t)+C_3(\xi)\sin (\func{Im}(\lambda_2(\xi))t)\right]
\end{equation}
with $C_1,\, C_2$ and $C_3$ satisfying \eqref{C_system}. It is clear that $C_i(\xi)$ (that is, $C_i(|\xi|)),\,i=1,2,3$, are bounded in the compact set $\Upsilon_M$.
%\textcolor{blue}{(also we need to use that the eigenvalues are bounded in $\Upsilon_M$, don't we? Lemma \ref{Lemma_Y_M_2} allows this, but do we also need a lower bound for them? In this case, it would be $-1/\beta$ for the real and $\Re(\lambda(\xi_{\nu}))$ for the complex ones)}.
Thus,  there exists a constant $C>0$ depending on the bounding constant and $\nu_1$ and $\nu_2$ such that
\begin{equation}\label{C_i_Estimate}
|C_i(\xi)|\leq C(|\hat{u}_0(\xi)|+|\hat{u}_1(\xi)|+|\hat{u}_2(\xi)|), \qquad \textrm{ for all } \xi \in \Upsilon_M \qquad (i=1,2,3).
\end{equation}
This last inequality together with \eqref{Bound_M_Eigenvalues} and \eqref{Solution_Formula_2} leads to \eqref{Middle_F_Estimate}.

Second, if the roots of \eqref{Characteristic_Polynomial_xi} are real and distinct, then the solution of \eqref{Bose_Problem_Linear_Fourier} is written as
\begin{equation}\label{Solution_Formula_3}
\hat{u}(\xi,t)=C_1(\xi)e^{\lambda_1(\xi)t}+C_2(\xi)e^{\lambda_2(\xi)t}+C_3(\xi)e^{\lambda_3(\xi)t},
\end{equation}
where $C_1,\, C_2$ and $C_3$ are satisfying the system
\begin{equation}\label{C_system_2}
\left\{
\begin{array}{ll}
C_1+C_2+C_3=\hat{u}_0,\vspace{0.2cm } \\
\lambda_1 C_1 +  \lambda_2 C_2 +\lambda_3 C_3 = \hat{u}_1,\vspace{0.2cm } \\
\lambda_1^2 C_1 +  \lambda_2^2 C_2 +\lambda_3 ^2C_3= \hat{u}_2
\end{array}
\right.
\end{equation}
(that this system was obtained imposing that $\hat{u}(\xi,t)$ must satisfy the initial conditions \eqref{Initial_data_Fourier}).
It is not hard to see that \eqref{C_i_Estimate} also holds and therefore (as before) \eqref{Middle_F_Estimate} is also satisfied.

Third, we assume that there exists $\xi_0\in \Upsilon_M$ such equation \eqref{Characteristic_Polynomial_xi}  has three real roots, one of them with double multiplicity, $\lambda_2(\xi_0)=\lambda_3(\xi_0)$ (according to Proposition \ref{descriptioneig} that is when $|\xi_0|=0$, or ${\tau }/{\beta}\neq {1}/{9}$ and $|\xi_0|=\sqrt{m_1} \textrm{ or }\sqrt{m_2}$). In this case, the solution of \eqref{Bose_Problem_Linear_Fourier} is given by
\begin{equation}\label{Solution_Formula_3}
\hat{u}(\xi_0,t)=C_1(\xi_0)e^{\lambda_1(\xi_0)t}+\left(C_2(\xi_0) +C_3(\xi_0)t\right)e^{\lambda_2(\xi_0)t},
\end{equation}
where $C_1,\,C_2$ and $C_3$ are the solutions of the system (again obtained imposing the corresponding initial conditions of $\hat{u}(\xi_0,t)$):
\begin{equation}\label{C_system_3}
\left\{
\begin{array}{ll}
C_1+C_2=\hat{u}_0,\vspace{0.2cm } \\
\lambda_1 C_1 +  \lambda_2 C_2 + C_3 = \hat{u}_1,\vspace{0.2cm } \\
\lambda_1^2 C_1 +  \lambda_2^2 C_2 +2\lambda_2C_3= \hat{u}_2 .
\end{array}
\right.
\end{equation}
The same estimates as in \eqref{C_i_Estimate} hold. Consequently,
we obtain
\begin{equation*}
|\hat{u}(\xi_0,t)|\leq C_M(1+t)\left(|\hat{u}_0(\xi_0)|+ |\hat{u}_1(\xi_0)|+|\hat{u}_2(\xi_0)|\right)e^{-c_4 t}.
\end{equation*}
%\textcolor{red}{I THINK WE CAN ERASE THIS NEXT PARAGRAPH: For all $\xi$ in small neighborhood of $\xi_0$,  we can show, by the same method used in \cite[Proposition 3.3]{Reiss} that}
%\begin{equation*}
%|\hat{u}(\xi,t)|\leq C_M(\xi_0)(1+t)\left(|\hat{u}_0(\xi)|+ |\hat{u}_1(\xi)|+|\hat{u}_2(\xi)|\right)e^{-c_4 t}.
%\end{equation*}
%
%\textcolor{red}{Therefore, the compactness of the set $\Upsilon_M$ leads to the uniformity of the constants in the above estimate. Thus, \eqref{Middle_F_Estimate_2} holds.}

Finally, suppose that we are in the special case of $\xi_0\in \Upsilon_M$ such that equation \eqref{Characteristic_Polynomial_xi} has a triple real root $\lambda_{1,2,3}(\xi_0)= \lambda(\xi_0)$. According to Proposition \ref{descriptioneig} this would happen when $\tau /\beta=1/9$ and $|\xi_0|=\sqrt{m_1}=\sqrt{m_2}$ and we would have $\lambda(\xi_0)=-3/\beta$. In this case, the solution of \eqref{Bose_Problem_Linear_Fourier} is given by
\begin{equation}\label{Solution_Formula_4}
\hat{u}(\xi_0,t)=\left(C_1(\xi_0) +C_2(\xi_0) t +C_3(\xi_0)t^2\right)e^{-\frac{3}{\beta}t},
\end{equation}
where $C_1,\,C_2$ and $C_3$ are the solutions of the system obtained by imposing the corresponding initial conditions of $\hat{u}(\xi_0,t)$:
\begin{equation}\label{C_system_4}
\left\{
\begin{array}{ll}
C_1=\hat{u}_0,\vspace{0.2cm } \\
\lambda C_1 +  C_2  = \hat{u}_1,\vspace{0.2cm } \\
\lambda^2 C_1 +  2 \lambda C_2 +2 C_3= \hat{u}_2 .
\end{array}
\right.
\end{equation}
The same estimates as in the previous cases hold and, hence, we obtain
\begin{equation*}
|\hat{u}(\xi_0,t)|\leq C_M(1+t+t^2)\left(|\hat{u}_0(\xi_0)|+ |\hat{u}_1(\xi_0)|+|\hat{u}_2(\xi_0)|\right)e^{-\frac{3}{\beta} t}.
\end{equation*}

\end{proof}
\section{Decay estimates}\label{sec:decay}
In this section, we show the decay estimates for the solution $u(x,t)$ of the system  \eqref{Bose_Problem}-\eqref{Initial_data_1} using the eigenvalues expansion results of Section \ref{sec:eigenvalues}. For simplicity in the notation, all the norms of this section will be in $\R^N$, although we are going to skip this in the notation.

\begin{theorem}[$L^1$--initial data]\label{Theorem_Solution_Energy_Space}
Let $(u_0, u_1,u_2)\in L^1(\R^N)\cap H^s(\R^N)$, $s\geq 1$, and $0<\tau<\beta$. Then for any $t\geq 0$ the following decay estimates hold for all $0\leq j\leq s$ and certain constants $C,c>0$ independent of $t$ and of the initial data:
%\begin{itemize}
%\item For $N\geq 3$,
%\begin{eqnarray}\label{Estimate_N_3}
%\left\Vert \partial _{x}^{j}u\left( t\right) \right\Vert _{L^{2}}&\leq& C(\Vert u_0 \Vert_{L^1}+\Vert u_1 \Vert_{L^1}+\Vert u_2\Vert_{L^1})(1+t)^{-(N-2)/4-j/2}\notag\\
%&&+C(\Vert\partial_x^j  u_0\Vert_{L^2}+\Vert\partial_x^j  u_1\Vert_{L^2}+\Vert \partial_x^j u_2\Vert_{L^2})e^{-ct}.
%\end{eqnarray}
%\item For $N=1,2$,
\begin{eqnarray}\label{Estimate_N_2}
\left\Vert \partial _{x}^{j}u\left( t\right) \right\Vert _{L^{2}}&\leq& C(\Vert u_0 \Vert_{L^1}+\Vert u_1 \Vert_{L^1}+\Vert u_2\Vert_{L^1})(1+t)^{1-N/4-j/2}\notag\\
&&+C(\Vert\partial_x^j  u_0\Vert_{L^2}+\Vert\partial_x^j  u_1\Vert_{L^2}+\Vert \partial_x^j u_2\Vert_{L^2})e^{-ct}.
\end{eqnarray}
where  $c=\min\left\{ \frac{1}{\beta},\frac{\beta-\tau }{2\beta\tau }, |\Re(\lambda_{2,3}(\xi_{\nu}))|\right\}$.

%\end{itemize}
\end{theorem}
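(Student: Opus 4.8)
The plan is to work entirely on the Fourier side. By Plancherel's theorem,
\begin{equation*}
\left\Vert \partial_x^j u(t)\right\Vert_{L^2}^2 = \int_{\R^N} |\xi|^{2j}\,|\hat{u}(\xi,t)|^2\, d\xi,
\end{equation*}
and I would split this integral over the three regions $\Upsilon_L$, $\Upsilon_M$, $\Upsilon_H$ introduced above, bounding each piece by the pointwise estimates of Propositions \ref{Proposition_Low}, \ref{Proposition_Middle} and \ref{Proposition_high}, respectively. The only elementary inputs needed beyond those propositions are the trivial bound $|\hat{u}_i(\xi)|\leq \Vert u_i\Vert_{L^1}$ for $L^1$ data together with the integral estimates of Lemma \ref{Lemma_Inequalities}.

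The low frequency region $\Upsilon_L$ produces the leading-order rate. Inserting \eqref{Low_F_Estimate_1} and expanding the square, I would handle its three groups of terms separately. The first group carries $e^{-c_1t}$ and hence contributes a purely exponential term. For the cosine group I would bound $|\hat{u}_i|\leq\Vert u_i\Vert_{L^1}$ and apply \eqref{Estimate_Cos_N_3} (with $j$ replaced by $2j$), giving a contribution of order $(1+t)^{-N/4-j/2}(\Vert u_0\Vert_{L^1}+\Vert u_1\Vert_{L^1}+\Vert u_2\Vert_{L^1})$. The delicate sine group contains the factors $|\xi|^{-1}|\hat{u}_1|$ and $|\xi|^{-1}|\hat{u}_2|$; after bounding $|\hat{u}_i|\leq\Vert u_i\Vert_{L^1}$ (and noting $\Upsilon_L\subset\{|\xi|\leq1\}$), the relevant integral is precisely
\begin{equation*}
\int_{|\xi|\leq\nu_1} |\xi|^{2j}\, e^{-2c_2|\xi|^2 t}\left|\frac{\sin(|\xi| t)}{|\xi|}\right|^2 d\xi \leq C(1+t)^{2-N/2-j}
\end{equation*}
by \eqref{Estimate_Sin_N_2}, which after taking square roots yields exactly the dominant rate $(1+t)^{1-N/4-j/2}$. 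The remaining sine term $|\xi||\hat{u}_0|$ gains two extra powers of $|\xi|$ and therefore decays strictly faster.

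On the compact middle region $\Upsilon_M$ I would use \eqref{Middle_F_Estimate}; the exceptional bounds \eqref{Middle_F_Estimate_2}--\eqref{Middle_F_Estimate_3} only concern a set of measure zero (Remark \ref{rem:measurezero2}) and may be discarded. Since $|\xi|^{2j}$ is bounded on $\Upsilon_M$ and $e^{-c_4t}$ factors out, this piece is controlled by $e^{-c_4t}$ times the $L^2$ norms of the data. In $\Upsilon_H$ the coefficients $1+|\xi|^{-1}+|\xi|^{-2}$ in \eqref{High_F_Estimate} are bounded (as $|\xi|>\nu_2>1$), so this piece is bounded by $e^{-2c_3t}\int_{|\xi|>\nu_2}|\xi|^{2j}(|\hat{u}_0|^2+|\hat{u}_1|^2+|\hat{u}_2|^2)\,d\xi$, i.e. by $e^{-2c_3t}$ times $(\Vert\partial_x^j u_0\Vert_{L^2}^2+\Vert\partial_x^j u_1\Vert_{L^2}^2+\Vert\partial_x^j u_2\Vert_{L^2}^2)$. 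Collecting the three regions and setting $c=\min\{1/\beta,(\beta-\tau)/(2\beta\tau),|\Re(\lambda_{2,3}(\xi_{\nu}))|\}$ (note $1/\tau>1/\beta$, so the low-frequency rate $c_1$ is never the smallest) yields \eqref{Estimate_N_2}.

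The main obstacle is the sharp treatment of the sine group in $\Upsilon_L$: the $|\xi|^{-1}$ singularity multiplying the $L^1$ data is exactly what prevents the faster energy-norm rate $(1+t)^{-N/4-j/2}$ of Theorem \ref{Theorem_Main} and forces the slower $(1+t)^{1-N/4-j/2}$. Everything hinges on \eqref{Estimate_Sin_N_2}, where the extra two powers of $(1+t)$ arise from the $|\xi|^{-2}$ weight; this is precisely the mechanism that is later improved under the cancellation $\int u_1=\int u_2=0$, through the refined bound \eqref{L11bound} feeding into \eqref{Low_F_Estimate}.
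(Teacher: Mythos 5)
Your proposal is correct and follows essentially the same route as the paper's proof: Plancherel, the three-region splitting, the pointwise bounds of Propositions \ref{Proposition_Low}, \ref{Proposition_Middle} and \ref{Proposition_high}, and the integral estimates \eqref{Estimate_Integral}, \eqref{Estimate_Cos_N_3} and \eqref{Estimate_Sin_N_2}, with the sine group correctly identified as the source of the dominant rate $(1+t)^{1-N/4-j/2}$. The only cosmetic difference is that the paper absorbs the $e^{-c_1 t}$ group into the polynomial term via $e^{-2t/\tau}\leq e^{-2|\xi|^2 t/\tau}$ on $|\xi|\leq 1$, whereas you keep it exponential; either way it is dominated by the stated bound.
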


The above estimates can be improved for if $N+j\geq 3$ and get the following Theorem.
\begin{theorem}\label{Theorem_N_3}
Let $(u_0, u_1,u_2)\in L^1(\R^N)\cap H^s(\R^N)$, $s\geq 1$, and $0<\tau<\beta$. Assume that $N+j\geq 3$.  Then, for $t\geq 0$, the following decay estimates hold for all $0\leq j\leq s$ and certain constants $C,c>0$ independent of $t$ and of the initial data:
\begin{eqnarray}\label{Estimate_N_3}
\left\Vert \partial _{x}^{j}u\left( t\right) \right\Vert _{L^{2}}&\leq& C(\Vert u_0 \Vert_{L^1}+\Vert u_1 \Vert_{L^1}+\Vert u_2\Vert_{L^1})(1+t)^{-(N-2)/4-j/2}\notag\\
&&+C(\Vert\partial_x^j  u_0\Vert_{L^2}+\Vert\partial_x^j  u_1\Vert_{L^2}+\Vert \partial_x^j u_2\Vert_{L^2})e^{-ct}.
\end{eqnarray}
where  $c=\min\left\{ \frac{1}{\beta},\frac{\beta-\tau }{2\beta\tau }, |\Re(\lambda_{2,3}(\xi_{\nu}))|\right\}$.
\end{theorem}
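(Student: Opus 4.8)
The plan is to follow \emph{verbatim} the scheme used for Theorem \ref{Theorem_Solution_Energy_Space}, the sole change being which inequality of Lemma \ref{Lemma_Inequalities} is invoked for the oscillatory low-frequency term. First I would apply Plancherel's identity to write
\[
\left\Vert \partial_x^j u(t)\right\Vert_{L^2}^2 = \int_{\R^N}|\xi|^{2j}\,|\hat{u}(\xi,t)|^2\,d\xi = \int_{\Upsilon_L}(\cdots) + \int_{\Upsilon_M}(\cdots) + \int_{\Upsilon_H}(\cdots),
\]
and estimate the three pieces using Propositions \ref{Proposition_Low}, \ref{Proposition_Middle} and \ref{Proposition_high}, respectively. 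In the middle and high regions nothing changes from the previous theorem: on the compact set $\Upsilon_M$ the estimate \eqref{Middle_F_Estimate} holds for a.e.\ $\xi$ (the measure-zero exceptions of Remark \ref{rem:measurezero2} being harmless), while on $\Upsilon_H$ all the negative powers $|\xi|^{-k}$ appearing in \eqref{High_F_Estimate} are bounded by powers of $\nu_2^{-1}$. In both cases one gets $|\hat u|\le C(|\hat u_0|+|\hat u_1|+|\hat u_2|)e^{-ct}$, so integrating $|\xi|^{2j}|\hat u|^2$ produces the exponentially decaying contribution $C(\Vert\partial_x^j u_0\Vert_{L^2}+\Vert\partial_x^j u_1\Vert_{L^2}+\Vert\partial_x^j u_2\Vert_{L^2})e^{-ct}$ with $c=\min\{c_3,c_4\}$.

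The heart of the matter, and the only place where the improvement appears, is the low-frequency integral. Squaring \eqref{Low_F_Estimate_1}, multiplying by $|\xi|^{2j}$ and using $|\hat u_i(\xi)|\le\Vert u_i\Vert_{L^1}$, the first (purely exponential in $t$) group contributes an $e^{-c_1 t}$ term, while the cosine group is controlled by \eqref{Estimate_Cos_N_3} applied with exponent $2j$, giving the fast rate $(1+t)^{-N/4-j/2}$. The slowest contribution is the sine group: the terms $\tfrac{1}{|\xi|}|\hat u_1|$ and $\tfrac{1}{|\xi|}|\hat u_2|$ yield, after squaring,
\[
\int_{\Upsilon_L}|\xi|^{2j}\,|\hat u_i(\xi)|^2\,e^{-2c_2|\xi|^2 t}\left|\frac{\sin(|\xi| t)}{|\xi|}\right|^2 d\xi, \qquad i=1,2,
\]
whereas the $|\xi||\hat u_0|$ term, bounded via $|\sin|\le 1$ and \eqref{Estimate_Integral} with exponent $2j+2$, decays even faster. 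In Theorem \ref{Theorem_Solution_Energy_Space} this sine integral was estimated by \eqref{Estimate_Sin_N_2}, producing the rate $(1+t)^{1-N/4-j/2}$. Here, since $N+j\ge 3$ together with $j\ge 0$ forces $2j+N\ge 3$, I would instead invoke the sharper bound \eqref{Estimate_Sin_N_3} with exponent $2j$, which gives $C\,t^{-(N-2)/2-j}$ and hence, after taking square roots, the improved rate $(1+t)^{-(N-2)/4-j/2}$.

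Finally I would assemble the three regions: using $\sqrt{a+b}\le\sqrt a+\sqrt b$, the polynomial part is dominated by the slowest sine term $(1+t)^{-(N-2)/4-j/2}$ times $\Vert u_0\Vert_{L^1}+\Vert u_1\Vert_{L^1}+\Vert u_2\Vert_{L^1}$, and the exponential part combines with $c=\min\{\tfrac{1}{\beta},\tfrac{\beta-\tau}{2\beta\tau},|\Re(\lambda_{2,3}(\xi_{\nu}))|\}$ exactly as claimed (the value $c_1=1/\tau$ is never binding, since $\tau<\beta$ gives $1/\tau>1/\beta$). The single point requiring genuine care is that \eqref{Estimate_Sin_N_3} furnishes $t^{-(N-2)/2-j}$ rather than $(1+t)^{-(N-2)/2-j}$: for $t\ge 1$ these are comparable, while for $0\le t\le 1$ the left-hand side is bounded and $(1+t)^{-(N-2)/4-j/2}$ is bounded below, so a uniform bound for all $t\ge 0$ follows after adjusting constants; equivalently, one uses \eqref{Estimate_Sin_N_2} on $[0,1]$ and \eqref{Estimate_Sin_N_3} on $[1,\infty)$. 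Apart from this bookkeeping, the argument is a direct transcription of the proof of Theorem \ref{Theorem_Solution_Energy_Space}.
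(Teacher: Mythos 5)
Your proposal is correct and follows essentially the same route as the paper: the paper's proof of Theorem \ref{Theorem_N_3} likewise reuses the proof of Theorem \ref{Theorem_Solution_Energy_Space} wholesale and only re-estimates the sine term $L_3$ via \eqref{Estimate_Sin_N_3} instead of \eqref{Estimate_Sin_N_2}, using that $2j+N\geq j+N\geq 3$. Your remark about converting the bound $t^{-(N-2)/2-j}$ into $(1+t)^{-(N-2)/2-j}$ by treating $t\leq 1$ separately is a detail the paper passes over silently, and it is handled correctly.
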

\begin{remark}
It is clear that for $N=3$, the estimate \eqref{Estimate_N_3} improves the one in \eqref{Estimate_N_2}. Indeed, from \eqref{Estimate_N_2}, we deduce in this case that the $L^2$-norm of the solution does not decay if $j=0$. On the other hand and from \eqref{Estimate_N_3} we deduce that, for $N=3$, the $L^2$-norm of the solution decays with the rate $(1+t)^{-1/4}.$ Also, for $N=1$ and $j\geq 2$ or $N=2$ and $j\geq 1$, the estimate \eqref{Estimate_N_3} gives faster decay rate than \eqref{Estimate_N_2}.
\end{remark}

\begin{proof}[Proof of Theorem \ref{Theorem_Solution_Energy_Space}]
Applying the Plancherel theorem, we obtain%
\begin{eqnarray}
\left\Vert \partial _{x}^{j}u\left( t\right) \right\Vert _{L^{2}}^{2}
&=&\int_{\mathbb{R^N} }\vert \xi \vert ^{2j}\vert \hat{u}%
\left( \xi ,t\right) \vert ^{2}d\xi  \notag \\
&= &\int_{\Upsilon_L}\vert \xi \vert ^{2j}\vert \hat{u}%
\left( \xi ,t\right) \vert ^{2}d\xi +\int_{\Upsilon_M}\vert \xi \vert ^{2j}\vert \hat{u}%
\left( \xi ,t\right) \vert ^{2}d\xi+\int_{\Upsilon_H}\vert \xi \vert ^{2j}\vert \hat{u}%
\left( \xi ,t\right) \vert ^{2}d\xi.
  \label{deron_U_equality_Y_L}
\end{eqnarray}
Using Proposition \ref{Proposition_Low} (the estimate \eqref{Low_F_Estimate_1}), we have in the low frequency region
\begin{eqnarray}\label{Low_k_1}
\int_{\Upsilon_L}\vert \xi \vert ^{2j}\vert \hat{u}%
\left( \xi ,t\right) \vert ^{2}d\xi&\leq & C\int_{\Upsilon_L}\vert \xi \vert ^{2j}\left(|\xi|^4|\hat{u}_0|^2+|\xi|^4|\hat{u}_1|^2+|\hat{u}_2|^2\right) e^{-\frac{2}{\tau }t}d\xi\notag\\
&&+C\int_{\Upsilon_L}\vert \xi \vert ^{2j}\left(|\hat{u}_0|^2+|\xi|^4|\hat{u}_1|^2+|\hat{u}_2|^2\right) e^{-(\beta-\tau )|\xi|^2 t}|\cos(|\xi| t)|^2d\xi\notag\\
&&+C\int_{\Upsilon_L}\vert \xi \vert ^{2j} \left( |\xi|^2|\hat{u_0}|^2+\dfrac{1}{|\xi|^2}|\hat{u}_1|^2+\dfrac{1}{|\xi|^2}|\hat{u}_2|^2\right) e^{-(\beta-\tau )|\xi|^2 t}|\sin(|\xi| t)|^2d\xi\notag\\
& =&C( L_1+L_2+L_3),
\end{eqnarray}
where $C$ is a generic constant which may take different values in different places.
In \eqref{Low_k_1}  we have used the algebraic inequality $(x+y+z)^2\leq 3(x^2+y^2+z^2).$

For $L_1$ and since $|\xi|$ is small, we have that
\begin{eqnarray*}
L_1&\leq& \int_{\Upsilon_L}\vert \xi \vert ^{2j}\left(|\xi|^4|\hat{u}_0|^2+|\xi|^4|\hat{u}_1|^2+|\hat{u}_2|^2\right) e^{-\frac{2}{\tau }|\xi|^2t}d\xi \\
&\leq &\Vert \hat{u}_0\Vert_{L^\infty(\R^N)}^2\int_{\Upsilon_L}  \vert \xi \vert ^{2j+4} e^{-\frac{2}{\tau }|\xi|^2t}d\xi+\Vert \hat{u}_1\Vert_{L^\infty(\R^N)}^2\int_{\Upsilon_L}  \vert \xi \vert ^{2j+4} e^{-\frac{2}{\tau }|\xi|^2t}d\xi\notag\\
&&+\Vert \hat{u}_2\Vert_{L^\infty(\R^N)}^2\int_{\Upsilon_L}  \vert \xi \vert ^{2j} e^{-\frac{2}{\tau }|\xi|^2t}d\xi
\end{eqnarray*}

This gives, by using the estimate $\Vert \hat{u}_i\Vert_{L^\infty(\R^N)}\leq \Vert u_i\Vert_{L^1(\R^N)},\, i=0,1,2, $ and the bound in \eqref{Estimate_Integral},
\begin{eqnarray*}
L_1&\leq &C\Vert u_0\Vert_{L^1}^2(1+t)^{-j-2-N/2}+C\Vert u_1\Vert_{L^1}^2(1+t)^{-j-2-N/2}+C\Vert u_2\Vert_{L^1}^2(1+t)^{-j-N/2}\\
&\leq&C(\Vert u_0 \Vert_{L^1}^2+\Vert u_1 \Vert_{L^1}^2+\Vert u_2\Vert_{L^1}^2)(1+t)^{-j-N/2}.
\end{eqnarray*}
For $L_2$ and recalling that $0<\tau <\beta$ (dissipative case), by using the estimate \eqref{Estimate_Cos_N_3} and with the same method as before we have
\begin{eqnarray*}
L_2\leq C(\Vert u_0 \Vert_{L^1}^2+\Vert u_1 \Vert_{L^1}^2+\Vert u_2\Vert_{L^1}^2)(1+t)^{-j-N/2}.
\end{eqnarray*}
%To estimate $L_3$, we have, by using \eqref{Estimate_Sin_N_3} and $\tau <\beta$ that for $N\geq 3$,
%\begin{eqnarray*}
%L_3&\leq& C\Vert u_0 \Vert_{L^1}^2 (1+t)^{-N/2-j-1}+C\Vert u_1 \Vert_{L^1}^2 (1+t)^{-(N-2)/2-j}\\
%&&+C\Vert u_2 \Vert_{L^1}^2 (1+t)^{-(N-2)/2-j}\\
%&\leq & C(\Vert u_0 \Vert_{L^1}^2+\Vert u_1 \Vert_{L^1}^2+\Vert u_2\Vert_{L^1}^2)(1+t)^{-(N-2)/2-j}
%\end{eqnarray*}
For the term $L_3$ and using \eqref{Estimate_Sin_N_2}, we have that%for $N=1,2$
\begin{eqnarray*}
L_3&\leq& C\Vert u_0 \Vert_{L^1}^2 (1+t)^{-N/2-j}+C\Vert u_1 \Vert_{L^1}^2 (1+t)^{2-N/2-j}
+C\Vert u_2 \Vert_{L^1}^2 (1+t)^{2-N/2-j}\\
&\leq& C(\Vert u_0 \Vert_{L^1}^2+\Vert u_1 \Vert_{L^1}^2+\Vert u_2\Vert_{L^1}^2)(1+t)^{2-N/2-j}.
\end{eqnarray*}
Collecting the above estimates, we obtain
%\begin{eqnarray}\label{Y_L_Estimate_1}
%&&\int_{\Upsilon_L}\vert \xi \vert ^{2j}\vert \hat{u}%
%\left( \xi ,t\right) \vert ^{2}d\xi\notag\\
%&\leq &C(\Vert u_0 \Vert_{L^1}^2+\Vert u_1 \Vert_{L^1}^2+\Vert u_2\Vert_{L^1}^2)(1+t)^{-(N-2)/2-j}\quad \text{for}\quad N\geq 3
%\end{eqnarray}
%and
\begin{equation}\label{Y_L_Estimate_2}
\int_{\Upsilon_L}\vert \xi \vert ^{2j}\vert \hat{u}%
\left( \xi ,t\right) \vert ^{2}d\xi
\leq C(\Vert u_0 \Vert_{L^1}^2+\Vert u_1 \Vert_{L^1}^2+\Vert u_2\Vert_{L^1}^2)(1+t)^{2-N/2-j}.
\end{equation}
Next, in $\Upsilon_H$, we can use \eqref{High_F_Estimate} and proceed as above, obtaining that

\begin{align}\label{Estimate_H_Main}
&\hspace{1cm}\int_{\Upsilon_H}\vert \xi \vert ^{2j}\vert \hat{u}%
\left( \xi ,t\right) \vert ^{2}d\xi\nonumber\\
&\leq C\int _{\Upsilon_H}\vert \xi \vert ^{2j}\Big((1+|\xi|^{-2}+|\xi|^{-4})|\hat{u}_0(\xi)|^2+(|\xi|^{-2}+|\xi|^{-4})|\hat{u}_1(\xi)|^2+(|\xi|^{-4}+|\xi|^{-6})|\hat{u}_2(\xi)|^2\Big)e^{-2c_3t}d\xi\nonumber\\
&\leq  C\int _{\Upsilon_H}\vert \xi \vert ^{2j}\Big(|\hat{u}_0(\xi)|^2+|\hat{u}_1(\xi)|^2+|\hat{u}_2(\xi)|^2\Big)e^{-2c_3t}d\xi\notag\\
&\leq  C(\Vert\partial_x^j  u_0\Vert_{L^2}+\Vert\partial_x^j  u_1\Vert_{L^2}+\Vert \partial_x^j u_2\Vert_{L^2})e^{-2c_3t}
\end{align}
where we have used the fact that $\xi$ is in $\Upsilon_H$, so all the terms $|\xi|^{-2}, |\xi|^{-4}, |\xi|^{-6}$ are bounded.

Finally, in $\Upsilon_M$, we have, by exploiting  \eqref{Middle_F_Estimate},
\begin{eqnarray}\label{Estimate_M_1}
\int_{\Upsilon_M}\vert \xi \vert ^{2j}\vert \hat{u}%
\left( \xi ,t\right) \vert ^{2}d\xi\leq C(\Vert\partial_x^j  u_0\Vert^2_{L^2}+\Vert \partial_x^j u_1\Vert^2_{L^2}+\Vert \partial_x^j u_2\Vert^2_{L^2})e^{-2c_4t}.
\end{eqnarray}
%Or, we have by using \eqref{Middle_F_Estimate_2},
%\begin{eqnarray}\label{Estimate_M_2}
%\int_{\Upsilon_M}\vert \xi \vert ^{2j}\vert \hat{u}%
%\left( \xi ,t\right) \vert ^{2}d\xi&\leq& C\textcolor{blue}{(\Vert \partial_x^j u_0\Vert^2_{L^2}+\Vert \partial_x^j u_1\Vert^2_{L^2}+\Vert \partial_x^j u_2\Vert^2_{L^2})}(1+t)\textcolor{blue}{e^{-2c_4t}}\notag\\
%&\leq& C(\Vert \partial_x^j u_0\Vert^2_{L^2}+\Vert \partial_x^j u_1\Vert^2_{L^2}+\Vert \partial_x^j u_2\Vert^2_{L^2})(1+t)^{\textcolor{blue}{2}-\gamma},
%\end{eqnarray}
%where $\gamma$ is a large positive constant that can be chosen as large as needed.\textcolor{blue}{Question: I don't see the purpose of this $\gamma$: is it to include the exponential decay? In this case, isn't it better to keep with the exponential?}

Consequently,  \eqref{Y_L_Estimate_2}, \eqref{Estimate_H_Main} (together with the Sobolev embedding) and  \eqref{Estimate_M_1}, then the estimate \eqref{Estimate_N_3} and \eqref{Estimate_N_2} hold with $c=\min\{c_3,c_4\}$, hence, $$c=\min\left\{ \frac{1}{\beta},\frac{\beta-\tau }{2\beta\tau }, |\Re(\lambda_{2,3}(\xi_{\nu}))|\right\}.$$ Observe that, as we had already pointed out in the previous Remarks \ref{rem:measurezero1} and \ref{rem:measurezero2}, the values of $\xi$ where we have double or triple real roots are sets of measure zero and, hence, they do not affect the decay of the solution.
\end{proof}
\begin{proof}[Proof of Theorem \ref{Theorem_N_3}]
The proof is exactly the same as the one of Theorem  \ref{Theorem_Solution_Energy_Space}, except for the estimate of $L_3$ in \eqref{Low_k_1}.
To estimate $L_3$ we have, by using \eqref{Estimate_Sin_N_3} and $0<\tau <\beta$ that, for $N+j\geq 3$,
\begin{eqnarray*}
L_3&\leq& C\Vert u_0 \Vert_{L^1}^2 (1+t)^{-N/2-j-1}+C\Vert u_1 \Vert_{L^1}^2 (1+t)^{-(N-2)/2-j} + C\Vert u_2 \Vert_{L^1}^2 (1+t)^{-(N-2)/2-j}\\
&\leq & C(\Vert u_0 \Vert_{L^1}^2+\Vert u_1 \Vert_{L^1}^2+\Vert u_2\Vert_{L^1}^2)(1+t)^{-(N-2)/2-j}.
\end{eqnarray*}
Collecting the above estimate with the estimates of $L_1$ and $L_2$ in the proof of Theorem \ref{Theorem_Solution_Energy_Space}, we obtain
\begin{equation}\label{Y_L_Estimate_1}
\int_{\Upsilon_L}\vert \xi \vert ^{2j}\vert \hat{u}%
\left( \xi ,t\right) \vert ^{2}d\xi
\leq C(\Vert u_0 \Vert_{L^1}^2+\Vert u_1 \Vert_{L^1}^2+\Vert u_2\Vert_{L^1}^2)(1+t)^{-(N-2)/2-j}\quad \text{for}\quad N+j\geq 3.
\end{equation}
As we have said, the estimates in $\Upsilon_M$ and $\Upsilon_H$ remain the same as in the proof of Theorem \ref{Theorem_Solution_Energy_Space}.
\end{proof}

\begin{theorem}[$L^{1,1}$--initial data]\label{Theorem_L_1_1_data}
Let $s\geq 1$, $\tau<\beta$ and let $u_0\in L^{1}(\R^N)\cap H^s(\R^N)$, $(u_1,u_2)\in L^{1,1}(\R^N)\cap H^s(\R^N)$ with $\int_{\R^N} u_i(x)dx=0,\,i=1,2$. Then, for $0\leq j\leq s$, the following decay estimate holds:
\begin{eqnarray}\label{Estimate_N}
\left\Vert \partial _{x}^{j}u\left( t\right) \right\Vert _{L^{2}}&\leq& C(\Vert u_0 \Vert_{L^1}+\Vert u_1 \Vert_{L^{1,1}}+\Vert u_2\Vert_{L^{1,1}})(1+t)^{-N/4-j/2}\notag\\
&+& C(\Vert\partial_x^j  u_0\Vert_{L^2} + \Vert\partial_x^j  u_1\Vert_{L^2} + \Vert \partial_x^j u_2\Vert_{L^2} )e^{-ct}
\end{eqnarray}
where $c= \min\{ c_1,c_3,c_4\}$ (defined in Propositions \ref{Proposition_Low}, \ref{Proposition_high} and Lemma \ref{Lemma_Y_M_2}, respectively).

\end{theorem}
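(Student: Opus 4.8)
The plan is to reproduce, essentially line by line, the proof of Theorem~\ref{Theorem_Solution_Energy_Space}, the single decisive change being that in the low-frequency region I may now invoke the sharpened pointwise bound \eqref{Low_F_Estimate} in place of \eqref{Low_F_Estimate_1}. The reason this is available is precisely the zero-mean hypotheses $\int_{\R^N}u_i(x)\,dx=0$, $i=1,2$, which through \eqref{L11bound} give $|\hat{u}_i(\xi)|\leq |\xi|\,\Vert u_i\Vert_{L^{1,1}}$ and thereby convert the singular prefactors $\tfrac{1}{|\xi|}|\hat{u}_i|$ appearing in the sine term of \eqref{Low_F_Estimate_1} into the bounded quantities $\Vert u_i\Vert_{L^{1,1}}$. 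So first I would apply Plancherel's theorem to split $\Vert\partial_x^j u(t)\Vert_{L^2}^2$ into the integrals over $\Upsilon_L$, $\Upsilon_M$ and $\Upsilon_H$ exactly as in \eqref{deron_U_equality_Y_L}.

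For the low-frequency integral I would insert \eqref{Low_F_Estimate}, use $(x+y+z)^2\leq 3(x^2+y^2+z^2)$, and split into the three pieces $L_1,L_2,L_3$ as in \eqref{Low_k_1}. The terms $L_1$ and $L_2$ are treated verbatim as in Theorem~\ref{Theorem_Solution_Energy_Space}: bounding $\Vert\hat{u}_i\Vert_{L^\infty}\leq\Vert u_i\Vert_{L^1}\leq\Vert u_i\Vert_{L^{1,1}}$ for $i=1,2$ and using \eqref{Estimate_Integral} and \eqref{Estimate_Cos_N_3} (for $L_1$ also the elementary bound $e^{-2c_1t}\leq e^{-2c_1|\xi|^2t}$ valid on $\Upsilon_L$), both give $\leq C(\Vert u_0\Vert_{L^1}^2+\Vert u_1\Vert_{L^{1,1}}^2+\Vert u_2\Vert_{L^{1,1}}^2)(1+t)^{-N/2-j}$. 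The essential gain lies in $L_3$: the squared sine term is now controlled by $C\big(|\xi|^2|\hat{u}_0|^2+\Vert u_1\Vert_{L^{1,1}}^2+\Vert u_2\Vert_{L^{1,1}}^2\big)e^{-(\beta-\tau)|\xi|^2t}|\sin(|\xi|t)|^2$, whose coefficients no longer blow up at $\xi=0$; hence I would simply estimate $|\sin(|\xi|t)|^2\leq 1$ and apply \eqref{Estimate_Integral} directly, obtaining $L_3\leq C(\Vert u_0\Vert_{L^1}^2+\Vert u_1\Vert_{L^{1,1}}^2+\Vert u_2\Vert_{L^{1,1}}^2)(1+t)^{-N/2-j}$. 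Thus \eqref{Estimate_Sin_N_2} is no longer needed and the spurious factor $(1+t)^2$ responsible for the slower rate $(1+t)^{2-N/2-j}$ in \eqref{Y_L_Estimate_2} disappears; collecting $L_1,L_2,L_3$ yields $\int_{\Upsilon_L}|\xi|^{2j}|\hat{u}(\xi,t)|^2\,d\xi\leq C(\Vert u_0\Vert_{L^1}^2+\Vert u_1\Vert_{L^{1,1}}^2+\Vert u_2\Vert_{L^{1,1}}^2)(1+t)^{-N/2-j}$.

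Finally, for the middle- and high-frequency regions I would carry over, unchanged, the estimates from the proof of Theorem~\ref{Theorem_Solution_Energy_Space}: \eqref{Middle_F_Estimate} over the compact set $\Upsilon_M$ gives an $e^{-2c_4t}$ bound governed by $\Vert\partial_x^j u_i\Vert_{L^2}$, and \eqref{High_F_Estimate} over $\Upsilon_H$ gives an $e^{-2c_3t}$ bound of the same type, the negative powers of $|\xi|$ being bounded on $\Upsilon_H$. Adding the three regions, taking square roots, and setting $c=\min\{c_1,c_3,c_4\}$ produces \eqref{Estimate_N}. The only genuinely new ingredient relative to Theorem~\ref{Theorem_Solution_Energy_Space} is the exploitation of the zero-mean condition via \eqref{L11bound}, and I expect no real obstacle beyond bookkeeping which norm ($L^1$ or $L^{1,1}$) controls each term; the entire difficulty is concentrated in the improved treatment of $L_3$, which is what upgrades the rate from $(1+t)^{1-N/4-j/2}$ to $(1+t)^{-N/4-j/2}$.
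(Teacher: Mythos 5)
Your proposal is correct and follows the paper's own proof essentially verbatim: the paper likewise reuses the argument of Theorem \ref{Theorem_Solution_Energy_Space} wholesale in $\Upsilon_M$ and $\Upsilon_H$, and in $\Upsilon_L$ replaces \eqref{Low_F_Estimate_1} by \eqref{Low_F_Estimate} (via the zero-mean hypothesis and \eqref{L11bound}), bounds the sine and cosine by $1$, uses $e^{-2c_1t}\leq e^{-2c_1|\xi|^2t}$, and applies \eqref{Estimate_Integral} to obtain the rate $(1+t)^{-N/2-j}$. No gaps.
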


\begin{proof}
The proof is the same as the one of Theorem \ref{Theorem_Solution_Energy_Space}, except the part of the low frequency region.
So, for $\xi\in\Upsilon_L$, we have by making use of \eqref{Low_F_Estimate} that
\begin{eqnarray*}
\int_{\Upsilon_L}\vert \xi \vert ^{2j}\vert \hat{u}%
\left( \xi ,t\right) \vert ^{2}d\xi&\leq & C\int_{\Upsilon_L}\vert \xi \vert ^{2j}(|\xi|^4|\hat{u}_0|^2+|\xi|^4|\hat{u}_1|^2+|\hat{u}_2|^2) e^{-2c_1t} d\xi\\
&+& C\int_{\Upsilon_L}\vert \xi \vert ^{2j}(|\hat{u}_0|^2+|\xi|^4|\hat{u}_1|^2+|\hat{u}_2|^2) e^{-2c_2|\xi|^2 t}\vert\cos (|\xi| t)\vert^2d\xi\\
&+& C\int_{\Upsilon_L}\vert \xi \vert ^{2j}\left( |\xi|^2|\hat{u_0}|^2+\Vert u_1\Vert_{L^{1,1}}^2+\Vert u_2\Vert_{L^{1,1}}^2\right)e^{-2c_2|\xi|^2 t}|\sin (|\xi| t)|^2d\xi\\
&\leq  & C (\Vert u_0\Vert_{L^1}^2+\Vert u_1\Vert_{L^{1,1}}^2+\Vert u_2\Vert_{L^{1,1}}^2)(1+t)^{-N/2-j},
\end{eqnarray*}
where we have used that as $\xi\in\Upsilon_L$ we have $e^{-{2}c_1 t}\leq e^{-{2 c_1|\xi|^2} t}$, that the sinus and cosinus functions are bounded and the inequalities \eqref{L11bound} and \eqref{Estimate_Integral}. Collecting this estimate with the other estimates obtained in the proof of Theorem \ref{Theorem_Solution_Energy_Space} for the high and middle frequency regions and, proceeding in the same way as in the proof of Theorem \ref{Theorem_Solution_Energy_Space},  then \eqref{Estimate_N} is fulfilled.

\end{proof}

\section{Acknowledgments}
The authors would like to thank Prof. Dr. R. Racke  and also Prof. Dr. J. Sol\`a-Morales for their helpful discussions on the problem. %Also, they would like to thank Prof. J. Sol\`a-Morales for his helpful discussions on it.

This work is partially supported by the grants MTM2014-52402-C3-3-P (Spain) and MPC UdG 2016/047 (U. de Girona, Catalonia). Also, M. Pellicer is part of the Catalan research group 2014 SGR 1083.

%\bibliographystyle{plain}
%\bibliography{bib3}
\end{document}